\newcommand{\re}{\operatorname{Re}}
\newcommand{\Gal}{\operatorname{Gal}}
\newcommand{\Aut}{\operatorname{Aut}}
\newcommand{\Hom}{\operatorname{Hom}}
\newcommand{\im}{\operatorname{im}}
\newcommand{\GL}{\operatorname{GL}}
\newcommand{\ind}{{\operatorname{ind}}}
\newcommand{\rank}{\operatorname{rank}}
\newcommand{\res}{\operatorname{R}}
\newcommand{\nm}{\operatorname{N}}
\newcommand{\NF}{\operatorname{NF}}
\newcommand{\tor}{\operatorname{tor}}
\newcommand{\cond}{\operatorname{Cond}}
\newcommand{\lcm}{\operatorname{lcm}}
\newcommand{\Sqf}{\operatorname{Sqf}}
\newcommand{\Z}{\mathbb{Z}}
\newcommand{\Q}{\mathbb{Q}}
\newcommand{\R}{\mathbb{R}}
\newcommand{\G}{\mathbb{G}}
\newcommand{\CC}{\mathcal{C}}
\newcommand{\WW}{\mathcal{W}}
\theoremstyle{definition}
\newtheorem{theorem}{Theorem}[section]
\newtheorem{proposition}[theorem]{Proposition}
\newtheorem{lemma}[theorem]{Lemma}
\newtheorem{conjecture}[theorem]{Conjecture}
\newtheorem*{conjecture*}{Conjecture}
\newtheorem{remark}[theorem]{Remark}
\newcommand\semilarge{\@setfontsize\semilarge{11}{13.2}}
\title{\semilarge{\textbf{COUNTING $3$-DIMENSIONAL ALGEBRAIC TORI OVER $\Q$}}}
\author{\normalsize{JUNGIN LEE}}
\date{}
\renewenvironment{abstract}
 {\quotation\small\noindent\rule{\linewidth}{.5pt}\par\smallskip
  {\centering\bfseries\abstractname\par}\medskip}
 {\par\noindent\rule{\linewidth}{.5pt}\endquotation}
\begin{document}
\maketitle
\vspace{-18mm}

\begin{abstract}
In this paper we count the number $N_3^{\tor}(X)$ of $3$-dimensional algebraic tori over $\Q$ whose Artin conductor is bounded above by $X$. We prove that $N_3^{\tor}(X) \ll_{\varepsilon} X^{1 + \frac{\log 2 + \varepsilon}{\log \log X}}$, and this upper bound can be improved to $N_3^{\tor}(X) \ll X (\log X)^4 \log \log X$ under the Cohen-Lenstra heuristics for $p=3$. We also prove that for $67$ out of $72$ conjugacy classes of finite nontrivial subgroups of $\GL_3(\Z)$, Malle's conjecture for tori over $\Q$ holds up to a bounded power of $\log X$ under the Cohen-Lenstra heuristics for $p=3$ and Malle's conjecture for quartic $A_4$-fields. 

\end{abstract}

\section{Introduction} \label{Sec1}

Throughout the paper, we assume that every number field is contained in a fixed algebraic closure $\overline{\Q}$ of $\Q$. Let $K$ be a number field and $D_K$ be the absolute value of the discriminant of $K$. By Hermite-Minkowski theorem, there are only finitely many number fields $K$ such that $D_K$ is bounded above by a given number. For an integer $n \geq 2$, let $N_n(X)$ be the number of degree $n$ number fields $K$ such that $D_K \leq X$. For $n \geq 2$ and a transitive subgroup $G \leq S_n$, let $N_n(X; G)$ be the number of degree $n$ number fields $K$ such that $D_K \leq X$ and the Galois group $\Gal(K^c/\Q)$ is permutation-isomorphic to $G$ in $S_n$. Here $K^c$ denotes the Galois closure of $K$ over $\Q$. 

Counting number fields by discriminant (i.e. the asymptotics of the numbers $N_n(X)$ and $N_n(X; G)$) is one of the central problems in arithmetic statistics. Linnik's conjecture states that for each $n \geq 2$, there exists a constant $c_n>0$ such that $N_n(X) \sim c_n X$ as $X \rightarrow \infty$. This conjecture has been proved only for $n \leq 5$. The case $n=2$ is easy, the case $n=3$ was proved by Davenport-Heilbronn \cite{DH71} and the cases $n=4, 5$ were proved by Bhargava \cite{Bha05, Bha10}. For general $n$, upper bound of $N_n(X)$ has been studied by Schmidt \cite{Sch95}, Ellenberg-Venkatesh \cite{EV06}, Couveignes \cite{Cou20} and Lemke Oliver-Thorne \cite{LT22}.

Malle's conjecture \cite{Mal04} states that 
\begin{equation} \label{eq1a}
N_n(X; G) \sim c_G X^{\frac{1}{a(G)}} (\log X)^{b(G)-1}
\end{equation}
for some positive integers $a(G), b(G)$ and a constant $c_G>0$. The index of $g \in G \leq S_n$ is define by 
$$
\ind(g) := n - \text{the number of orbits of } g \text{ on } \left \{ 1, 2, \cdots, n \right \}
$$
and let $a(G) := \min_{g \in G \setminus \left \{ 1 \right \}} \ind(g)$. The number $b(G)$ is defined to be the number of orbits $\CC$ of $\Gal(\overline{\Q}/\Q)$-action on the conjugacy classes of $G$ (via the cyclotomic character) such that the index of some (equivalently, all) $g \in \CC$ is $a(G)$. We refer \cite[Section 1.1]{Lee21} for a summary on the known results on Malle's conjecture. 

As a natural generalization of counting number fields by discriminant, the author \cite{Lee21} studied counting (algebraic) tori over $\Q$ by Artin conductor. Let $T$ be an $n$-dimensional tori over $\Q$ with a splitting field $L$ and $X^*(T) := \Hom_{\overline{\Q}} (T_{\overline{\Q}}, \G_{m, \overline{\Q}})$ be its character group. Then the Galois group $\Gal(\overline{\Q} / \Q)$ acts on $X^*(T)$ by conjugation, and this induces the representation
$$
\rho_T : \Gal(\overline{\Q} / \Q) \rightarrow \Aut (X^*(T)) \cong \GL_n(\Z).
$$
Its image $G_T := \im(\rho_T)$ is a finite subgroup of $\GL_n(\Z)$ isomorphic to $\Gal(L/\Q)$. Since the isomorphism $\Aut (X^*(T)) \cong \GL_n(\Z)$ depends on the choice of the $\Z$-basis of $X^*(T)$, $\rho_T$ and $G_T$ are well-defined only up to conjugation. Let $C(T)$ be the Artin conductor of the representation
$$
\rho : \Gal(L/\Q) \rightarrow \Aut (X^*(T)_{\Q}) \cong \GL_n(\Q)
$$
induced by $\rho_T$. For a degree $n$ number field $K$, $T = \res_{K/\Q} \G_m$ (Weil restriction of $\G_m$) is an $n$-dimensional torus over $\Q$ whose splitting field is $K^c$ and the Artin conductor is $C(\res_{K/\Q} \G_m)=D_K$ (see \cite[Section 1.2]{Lee21}). This shows that counting tori over $\Q$ of given dimension by Artin conductor is a generalization of counting number fields of given degree by discriminant. 

Let $N_n^{\tor}(X)$ be the number of the isomorphism classes of tori over $\Q$ of dimension $n$ such that $C(T) \leq X$. For a finite subgroup $H \neq 1$ of $\GL_n(\Z)$, $N_n^{\tor}(X; H)$ denotes the number of such tori $T$ over $\Q$ such that $G_T$ is conjugate to $H$ in $\GL_n(\Z)$. The following two conjectures from \cite{Lee21} are analogues of Linnik's and Malle's conjectures for tori over $\Q$. Note that the second conjecture follows from a more general conjecture of Ellenberg and Venkatesh \cite[Question 4.3]{EV05}, so it is not new. It is also remarkable that the second conjecture implies the first conjecture \cite[Corollary 3.6]{Lee21}.

\begin{conjecture} \label{conj1a}
(\cite[Conjecture 3.1]{Lee21}) For every $n \geq 1$, there exists a constant $c_n>0$ satisfying
\begin{equation}
N_n^{\tor}(X) \sim c_n X (\log X)^{n-1}.
\end{equation}
\end{conjecture}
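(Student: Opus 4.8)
The plan is to convert the count of tori into a sum of Malle-type counts over finite subgroups of $\GL_n(\Z)$, to isolate the single subgroup responsible for the main term, and to bound the remaining contributions. An $n$-dimensional torus $T/\Q$ is determined up to isomorphism by its cocharacter lattice, i.e.\ by a continuous homomorphism $\rho_T\colon\Gal(\overline{\Q}/\Q)\to\GL_n(\Z)$ with finite image, taken up to $\GL_n(\Z)$-conjugacy, and $\cond T$ is the Artin conductor of $\rho_T\otimes_\Z\C$. Thus
\[
N_n^{\tor}(X)=\sum_{[G]}N(G,X),
\]
where $[G]$ runs over the (finitely many, by Jordan--Zassenhaus) conjugacy classes of finite subgroups $G\subseteq\GL_n(\Z)$ and $N(G,X)$ counts surjections $\Gal(\overline{\Q}/\Q)\twoheadrightarrow G$ with $\cond(\rho\otimes\C)\le X$, taken modulo the conjugation action of $N_{\GL_n(\Z)}(G)$; the trivial group contributes the single torus $\G_m^n$. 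For a faithful $\rho\colon G\hookrightarrow\GL_n(\Q)$ write $\operatorname{ind}(g)=n-\dim_\Q\ker(\rho(g)-1)$, set $a(G)=\min_{1\ne g\in G}\operatorname{ind}(g)$, and let $b(G)$ be the number of orbits of the cyclotomic action on the set of $G$-conjugacy classes of those $g$ with $\operatorname{ind}(g)=a(G)$. The Ellenberg--Venkatesh form of Malle's conjecture (\cite[Question 4.3]{EV05}) predicts $N(G,X)\sim c_G\,X^{1/a(G)}(\log X)^{b(G)-1}$ for every such $G$, and I would take this prediction as the engine of the argument.

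The next step is an extremal analysis of the pair $(a(G),b(G))$. Every nontrivial torsion element $g\in\GL_n(\Z)$ satisfies $\operatorname{ind}(g)\ge1$, with equality exactly when $\rho(g)$ has eigenvalue $1$ with multiplicity $n-1$ and eigenvalue $-1$ with multiplicity $1$ --- that is, when $g$ is a reflection; such a $g$ has order $2$ and so is fixed by the cyclotomic action. Hence $a(G)=1$ precisely when $G$ contains a reflection, in which case $b(G)$ is just the number of $G$-conjugacy classes of reflections in $G$. A classification of the finite subgroups of $\GL_n(\Z)$ --- explicit for small $n$ --- should give $b(G)\le n$, with equality \emph{only} when $G$ is conjugate to the diagonal group $(\Z/2\Z)^n\subseteq\GL_n(\Z)$. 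Granting the single-group asymptotics above, the main term is then carried entirely by $G=(\Z/2\Z)^n$: every other $G$ containing a reflection contributes $O\!\bigl(X(\log X)^{n-2}\bigr)$, and every $G$ with no reflection contributes $O\!\bigl(X^{1/2}(\log X)^{O(1)}\bigr)$.

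To evaluate the main term, observe that a surjection onto the diagonal $(\Z/2\Z)^n$ is the same as an $n$-tuple $(\chi_1,\dots,\chi_n)$ of $\Q$-linearly independent quadratic Dirichlet characters, with $\cond(\rho\otimes\C)=\prod_i\cond(\chi_i)$, and the relevant part of the normalizer acts as $S_n$ by permuting the $\chi_i$. Since $\sum_\chi\cond(\chi)^{-s}$ continues to the half-plane $\re s>\tfrac12$ with a single, simple pole at $s=1$, its $n$-th power has a pole of order $n$ there; discarding the $O(X(\log X)^{n-2})$ many $\Q$-dependent tuples, dividing by $n!$, and invoking a standard Tauberian theorem yields $N((\Z/2\Z)^n,X)\sim c_nX(\log X)^{n-1}$ with an explicit $c_n>0$. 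The matching lower bound for $N_n^{\tor}(X)$ is immediate from this one family, so all the content of the conjecture lies in the upper bound.

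The hard part will be the uniform upper bound concealed inside the extremal step: for every $G$ not conjugate to $(\Z/2\Z)^n$ one must prove $N(G,X)=o\!\bigl(X(\log X)^{n-1}\bigr)$, which is essentially the upper bound in Malle's conjecture for a finite but delicate list of groups, and this remains open in general. The worst cases are certain $G$ of order divisible by $3$ --- for $n=3$ these force one to count $A_4$-quartic fields and cubic fields with controlled ramification --- where the natural approach runs into the $3$-torsion of ideal class groups, so that one is currently obliged to assume Cohen--Lenstra-type averages for $p=3$ together with Malle's conjecture for quartic $A_4$-fields. The present paper establishes exactly such upper bounds when $n=3$, but only under those hypotheses; removing the hypotheses, and supplying a substitute for the explicit subgroup classification once $n\ge4$, is what separates this strategy from an unconditional proof of \Cref{conj1a}.
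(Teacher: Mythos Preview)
The statement you are attempting to prove is \emph{Conjecture}~\ref{conj1a}; the paper does not prove it and offers no argument to compare against. What the paper actually establishes for $n=3$ is the upper bound $N_3^{\tor}(X)\ll_{\varepsilon}X^{1+\frac{\log 2+\varepsilon}{\log\log X}}$ unconditionally and $N_3^{\tor}(X)\ll X(\log X)^4\log\log X$ under Conjecture~\ref{conj1d}, together with the lower bound $X(\log X)^2\ll N_3^{\tor}(X)$ from \cite{Lee21}. Your write-up is therefore not a proof but a heuristic outline, and you yourself concede in the final paragraph that the required upper bounds are open.

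Beyond this, your extremal step contains a concrete error. You assert that among finite $G\leq\GL_n(\Z)$ with $a(G)=1$ one has $b(G)\leq n$ with equality \emph{only} for the diagonal group $(\Z/2\Z)^n$. For $n=3$ this is false: Table~\ref{table1} lists seven conjugacy classes with $(a(H),b(H))=(1,3)$, namely $H_{8,c}$, $H_{8,d}$, $H_{8,e}$, $H_{8,f}$, $H_{16,a}$, $H_{16,b}$, and $H_{24,d}$, of which only $H_{8,c}$ is the diagonal group. Several of these are not even isomorphic to $(\Z/2\Z)^3$ as abstract groups. Thus even granting the Ellenberg--Venkatesh asymptotics for every $G$, the main term of $N_n^{\tor}(X)$ is not carried by a single subgroup; the constant $c_n$ would be a sum $\sum c_H$ over all $H$ achieving $(a(H),b(H))=(1,n)$. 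This does not invalidate the overall strategy---one simply sums the leading contributions---but your claim that ``the main term is then carried entirely by $G=(\Z/2\Z)^n$'' and the ensuing computation of $c_n$ via a single Dirichlet series are wrong as stated.
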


\begin{conjecture} \label{conj1b}
(\cite[Conjecture 3.2]{Lee21}) For every $n \geq 1$ and a finite subgroup $1 \neq H \leq \GL_n(\Z)$, 
\begin{equation}
N_n^{\tor}(X; H) \sim c_H X^{\frac{1}{a(H)}} (\log X)^{b(H)-1}
\end{equation}
where the positive integers $a(H), b(H)$ and a constant $c_H>0$ depend only on $H$. For an $n \times n$ identity matrix $I_n$, the number $a(H)$ is given by
\begin{equation}
a(H) := \min_{h \in H \setminus \left \{ I_n \right \}} \rank(h - I_n)
\end{equation}
and the number $b(H)$ is given by the number of the orbits $\CC$ of the action of $\Gal(\overline{\Q} / \Q)$ on the conjugacy classes of $H$ via the cyclotomic character such that $\rank(h - I_n) = a(H)$ for some (equivalently, all) $h \in \CC$. 
\end{conjecture}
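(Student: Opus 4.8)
The plan is to convert the count of tori into a weighted count of Galois extensions, and then treat the expected main term and the error term separately.

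\textbf{Parametrization.} An $n$-dimensional torus $T/\Q$ whose Galois action on the character lattice $\cong\Z^n$ has image conjugate to a finite subgroup $H\le\GL_n(\Z)$ corresponds to a surjection $\rho\colon G_\Q\twoheadrightarrow H$ taken up to conjugacy by the normalizer of $H$ in $\GL_n(\Z)$, and its Artin conductor is $\cond(T)=\cond(\rho\otimes\C)=\prod_p p^{c_p(\rho)}$, where $c_p(\rho)=\sum_{i\ge 0}[G_0:G_i]^{-1}\dim(\C^n/(\C^n)^{G_i})$ is read off from the ramification filtration above $p$. So $N_n^{\tor}(X;H)$ counts such $\rho$ with $\prod_p p^{c_p(\rho)}\le X$. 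The first point to record is that if $p$ ramifies then $c_p(\rho)\ge\rank(h-I_n)$ for $h$ a generator of the image of tame inertia, with equality precisely when the ramification at $p$ is tame and cyclic; I would then organize surjections according to which of their ramified primes are minimally ramified in this sense.

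\textbf{Main term.} The conjectural main term should be carried by $\rho$ that are tamely and minimally ramified at every ramified prime: unramified outside a finite set $S$, and at each $p\in S$ the image of inertia is cyclic, generated by an $h$ with $\rank(h-I_n)=a(H)$ lying in one of the $b(H)$ allowed orbits $\CC$. Fixing the orbit at each prime turns the problem into a Selberg--Delange / sieve count of squarefree $m=\prod_{p\in S}p\le X^{1/a(H)}$ whose prime factors satisfy prescribed Frobenius conditions, weighted by the number of ways the local data glue to a global surjection onto $H$. For $H$ abelian this gluing count is a class field theory computation: the generating Dirichlet series factors into Hecke $L$-functions and a Tauberian theorem yields $c_HX^{1/a(H)}(\log X)^{b(H)-1}$, with $b(H)-1$ the number of allowed orbits beyond the first. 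For $H$ non-abelian the gluing count is exactly a count of $H$-number fields with prescribed (mostly tame) ramification, so the asymptotic becomes equivalent to Malle's conjecture for $H$ together with an equidistribution statement for its ramified primes.

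\textbf{Error term.} One must show that $\rho$ which are wildly ramified somewhere, or tamely but non-minimally ramified somewhere, contribute $o(X^{1/a(H)}(\log X)^{b(H)-1})$. The tool is an upper bound of the shape $\#\{K\colon \Gal(\widetilde K/\Q)\cong H,\ \disc K\le Y\}\ll_\varepsilon Y^{\beta+\varepsilon}$ combined with the fact that each bad prime forces an extra factor $\ge p$ (non-minimal tame) or $\ge p^2$ (wild) in $\cond(T)$, which one sums over the location and type of bad primes to extract a saving. For the unconditional bound $N_3^{\tor}(X)\ll_\varepsilon X^{1+(\log 2+\varepsilon)/\log\log X}$ this is essentially the whole argument: the subgroups $H\le\GL_3(\Z)$ with $a(H)=1$ are exactly those containing a reflection, for such $H$ the count is dominated by tori assembled out of quadratic characters, and the extra factor $X^{(\log 2+\varepsilon)/\log\log X}$ is the cost of the crude $\le 2^{O(\omega(D))}$ bound on the number of $3$-dimensional tori of a given conductor $D$, together with $\max_{D\le X}\omega(D)\sim\log X/\log\log X$.

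\textbf{The $\GL_3$ case analysis and the obstacle.} For Conjecture~\ref{conj1b} in dimension $3$ I would run through the $72$ conjugacy classes of nontrivial finite subgroups of $\GL_3(\Z)$, compute $a(H)$ and $b(H)$ from the character of the inclusion representation, and insert the best available number-field input for the group governing the gluing: class field theory for the abelian groups and their extensions by $\langle-I_3\rangle$; Davenport--Heilbronn and Bhargava, refined by the Cohen--Lenstra heuristics for $p=3$, for the groups built from $S_3$ and cubic fields; Cohen--Diaz--Olivier for the $D_4$-type groups; Bhargava for the $S_4$-type groups; and Malle's conjecture for quartic $A_4$-fields for the $A_4$-type groups. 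This should give the conjectured asymptotic up to a bounded power of $\log X$ for all but the $5$ exceptional classes, where the available non-abelian count is too weak even under these hypotheses. \textbf{The main obstacle} is the error term of the previous step for non-abelian $H$ with small $a(H)$: there one needs not an upper bound but an asymptotic, or at least a power-saving error, for $H$-extensions — unavailable for $A_4$, which is why it is assumed — and the bookkeeping of the many subgroup types, each with its own ramification combinatorics and its own controlling number-field family, is where the real work lies.
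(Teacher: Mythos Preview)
The statement you are attempting to prove is a \emph{conjecture}: the paper states it as Conjecture~\ref{conj1b} and does not prove it. There is therefore no proof in the paper to compare against. What the paper does establish is the conjecture for specific finite subgroups $H\le\GL_3(\Z)$---all abelian ones (Propositions~\ref{prop41a},~\ref{prop41c}) and a few non-abelian ones (Proposition~\ref{prop42a})---together with weaker upper and lower bounds for the remaining $H$ (Table~\ref{table2}).

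Your proposal is a heuristic strategy rather than a proof, and you seem to recognise this (``should give'', ``the main obstacle''). The framework is sound as a way to \emph{motivate} the conjecture: $a(H)$ is indeed the minimal possible conductor exponent at a ramified prime, and the tame minimally ramified contributions plausibly carry the main term. But your own analysis shows why this is not a proof: for non-abelian $H$ the ``gluing count'' is, as you say, equivalent to a refined Malle statement for $H$-extensions with prescribed local behaviour, and that is precisely what is open. Your sketch of the unconditional bound $N_3^{\tor}(X)\ll_\varepsilon X^{1+(\log 2+\varepsilon)/\log\log X}$ also misattributes the source of the extra factor: in the paper it does not arise from a crude $2^{O(\omega(D))}$ bound on tori of conductor $D$ built from quadratic characters, but from counting $S_3$-cubic fields with a prescribed quadratic resolvent via the $3$-torsion of class groups (see the proof of Proposition~\ref{prop43a} and \cite[Section~4]{Lee21}); the bottleneck cases are $H_{12,c}$ and $H_{24,d}$, which parametrise $D_6$-sextic fields, not products of quadratic characters.

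Where the paper does verify instances of the conjecture, its method is quite different from your ramification-type decomposition. For each $H$ it writes down the torus explicitly, expresses $C(T)$ as a concrete product or ratio of discriminants of named subfields of the splitting field (Section~\ref{Sec3}), and then plugs that formula directly into known asymptotics for number fields ordered by discriminant or conductor (M\"aki, Davenport--Heilbronn, Bhargava, Cohen--Diaz--Olivier, \cite{ASVW21}) combined with product-distribution lemmas (Proposition~\ref{prop21c}). There is no uniform main-term/error-term splitting by ramification type; the entire work is in the conductor identities of Section~\ref{Sec3} and in matching each resulting formula to an off-the-shelf count.
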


The above conjectures are trivial for the case $n=1$. For the $2$-dimensional case, the following results are known. Here $H_{12, A}$ is a finite subgroup of $\GL_2(\Z)$ which is isomorphic to the dihedral group $D_6$ of order $12$. Such group $H_{12, A}$ is unique up to conjugation and satisfies $a(H_{12, A})=1$ and $b(H_{12, A})=2$. We also use the asymptotic notation $f(X) \ll_{\varepsilon} g(X, \varepsilon)$, which means that $f(X) \ll g(X, \varepsilon)$ for every $\varepsilon > 0$.

\begin{proposition} \label{prop1c}
\begin{enumerate}
    \item (\cite[Proposition 4.1]{Lee21}) Conjecture \ref{conj1b} holds for every finite nontrivial subgroup of $\GL_2(\Z)$ which is not conjugate to $H_{12, A}$.

    \item (\cite[Theorem 4.9]{Lee21}) We have the followings:
    \begin{subequations} 
\begin{align}
X \ll N_2^{\tor}(X; H_{12, A}) & \ll_{\varepsilon} X^{1+\frac{\log 2 + \varepsilon}{\log \log X}} \\
X \log X \ll N_2^{\tor}(X) & \ll_{\varepsilon} X^{1+\frac{\log 2 + \varepsilon}{\log \log X}}. 
\end{align}
\end{subequations}
    
    \item (\cite[Theorem 4.10]{Lee21}) Under the assumption of Conjecture \ref{conj1d}, we have
\begin{equation} 
N_2^{\tor}(X; H_{12, A}) \leq N_2^{\tor}(X) 
\ll_{\varepsilon} X (\log X)^{1 + \varepsilon}.
\end{equation}
\end{enumerate}
\end{proposition}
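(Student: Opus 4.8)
\emph{Setup.} The plan is to use the standard equivalence (see \cite{Lee21}) between $2$-dimensional tori $T$ over $\Q$ and continuous homomorphisms $\rho\colon G_{\Q}\to\GL_2(\Z)$ together with the $\GL_2(\Z)$-conjugacy class $H$ of their image: then $N_2^{\tor}(X;H)$ counts surjections $G_{\Q}\twoheadrightarrow H$ with $\cond(T)\le X$ up to the natural action of $N_{\GL_2(\Z)}(H)$, where $\cond(T)$ is the Artin conductor of the rational representation $\rho\otimes\Q$ of $\Gal(K/\Q)\cong H$ with $K=\overline{\Q}^{\ker\rho}$. Since there are finitely many conjugacy classes of finite subgroups of $\GL_2(\Z)$, I would go through the list. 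For each $H$ I would decompose $\rho\otimes\Q$ into $\Q$-irreducible constituents and use the conductor--discriminant formula to read off $\cond(T)$. For the four classes with $a(H)=2$ — namely $\langle -I_2\rangle$, $C_3$, $C_4$, $C_6$ — this expresses $\cond(T)$ as the square of the conductor of a quadratic, cyclic cubic, cyclic quartic or cyclic sextic character, so after the $N_{\GL_2(\Z)}(H)$-bookkeeping $N_2^{\tor}(X;H)$ becomes the number of such fields of conductor $\le X^{1/2}$, which is $\sim cX^{1/2}$ by class field theory; as $b(H)=1$ here this is exactly the prediction of Conjecture~\ref{conj1b}.

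\emph{The clean $a(H)=1$ cases.} For the remaining subgroups with $a(H)=1$ other than $H_{12,A}$ — the two conjugacy classes of reflection subgroups of order $2$, the two Klein four-subgroups, and the classes of $S_3$ and of $D_4$ — the conductor--discriminant formula again gives $\cond(T)$ cleanly. For a reflection subgroup it is the discriminant of a quadratic field, so $N_2^{\tor}(X;H)\sim cX$ (with $b=1$). For a Klein four-subgroup it is $|d_{k_1}d_{k_2}|$ for two of the three quadratic subfields $k_1,k_2$ of $K$, and there are $\sim cX\log X$ such configurations with $|d_{k_1}d_{k_2}|\le X$ (with $b=2$). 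For $S_3$ it is the discriminant of the non-cyclic cubic field attached to $K$, so $N_2^{\tor}(X;S_3)\sim cX$ by the Davenport--Heilbronn theorem (with $b=1$). For $D_4$ it is an induced-representation conductor over a quadratic base field, counted via cyclic quartic extensions of quadratic fields, giving $\sim cX\log X$ (with $b=2$). The only point requiring care is that for these $H$ two constituents cannot share ramification at a cost below that of ramifying at separate primes, so $\cond(T)$ really behaves like a product of essentially independent discriminants; this holds because each constituent is $1$-dimensional or is the unique faithful irreducible of $H$. This establishes part~(1).

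\emph{The obstruction at $H_{12,A}$.} The subgroup $H_{12,A}\cong D_6\cong S_3\times C_2$ is exceptional because its faithful rational representation is $\rho_{\mathrm{st}}\otimes\varepsilon$, with $\rho_{\mathrm{st}}$ the standard representation of the $S_3$-quotient and $\varepsilon$ the quadratic character of the $C_2$-quotient. So a torus with image $H_{12,A}$ is the datum of an $S_3$-cubic field $F$ together with a quadratic field $k'$ (with $k'$ neither $\Q$ nor the quadratic resolvent of $F$), and at every prime $p\nmid 6$ dividing $\disc(F)$ the inertia image of $\rho_{\mathrm{st}}$ is a reflection or an order-$3$ rotation, so tensoring with $\varepsilon$ does not change the local conductor exponent there (while the contributions at $2$ and $3$ are $O(1)$). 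Hence $\cond(T)\asymp\disc(F)\cdot (d')^2$, where $d'$ is the part of $d_{k'}$ supported away from $\disc(F)$; thus for fixed $F$ the number of such tori with $\cond(T)\le X$ is $\ll 2^{\omega(\disc(F))}(X/\disc(F))^{1/2}$, and it is $\gg 1$ once $\disc(F)$ is somewhat below $X$. Summing over $F$ gives $N_2^{\tor}(X;H_{12,A})\asymp X^{1/2}\sum_{\disc(F)\le X}\disc(F)^{-1/2}\,2^{\omega(\disc(F))}$. The lower bound $\gg X$ follows by keeping a single fixed small $k'$ for a positive proportion of $F$ and invoking Davenport--Heilbronn; the unconditional upper bound follows from $\#\{F:\disc(F)\le t\}\ll t$ together with the maximal-order estimate $\max_{n\le X}2^{\omega(n)}=X^{(\log 2+o(1))/\log\log X}$, which yields $X^{1+(\log 2+\varepsilon)/\log\log X}$; the bounds for the total $N_2^{\tor}(X)$ then follow since every other subgroup contributes $O(X\log X)$. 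The hard part — and the only place Conjecture~\ref{conj1d} is needed — is to replace the crude factor $\max_n 2^{\omega(n)}$ by the on-average bound $\sum_{\disc(F)\le X}2^{\omega(\disc(F))}\ll_{\varepsilon}X(\log X)^{1+\varepsilon}$; grouping cubic fields by their quadratic resolvent $k$ reduces this to controlling, on average over $k$, the number of index-$3$ subgroups of ray class groups of $k$ weighted by $2^{\omega}$, and the Cohen--Lenstra heuristics for $p=3$ supply exactly the needed grip on the relevant $3$-torsion, after which a careful but routine analysis of the divisor sums gives part~(3).
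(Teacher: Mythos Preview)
The paper does not give its own proof of this proposition; it is quoted verbatim from \cite{Lee21}, so there is no in-paper argument to compare against. Your overall strategy --- run through the finite subgroups of $\GL_2(\Z)$, read off the Artin conductor via the conductor--discriminant formula, and match against known field-counting theorems --- is indeed the approach of \cite{Lee21}, and your treatment of $H_{12,A}$ in parts~(2) and~(3) has the right shape (though the actual argument in \cite[Section~4]{Lee21} parametrises $S_3$-cubics $F$ by their quadratic resolvent $E$ and an $f\in\Sqf_3$ with $D_F=D_Ef^2$, and controls the count via the bound $O(h_3(E)\cdot 2^{\omega(f)})$ rather than via $2^{\omega(\disc F)}$ directly; your claimed passage from Conjecture~\ref{conj1d} to the on-average bound on $\sum 2^{\omega(\disc F)}$ is not immediate and would need this reparametrisation).

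There is, however, a concrete error in your part~(1). You assert $b(H)=1$ for each of $\langle -I_2\rangle$, $C_3$, $C_4$, $C_6$ and hence $N_2^{\tor}(X;H)\sim cX^{1/2}$. This is correct for $\langle -I_2\rangle$ and $C_3$, but fails for $C_4$ and $C_6$. For $C_4=\langle g\rangle$ every nontrivial element has $\rank(h-I_2)=2$, and the $G_{\Q}$-orbits on conjugacy classes under the cyclotomic action are $\{g,g^3\}$ and $\{g^2\}$, so $b(C_4)=2$; likewise $b(C_6)=3$ (orbits $\{g,g^5\}$, $\{g^2,g^4\}$, $\{g^3\}$). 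Correspondingly, the number of cyclic quartic (resp.\ sextic) fields of conductor at most $X^{1/2}$ is $\sim cX^{1/2}\log X$ (resp.\ $\sim cX^{1/2}(\log X)^2$) by M\"aki's theorem --- this is exactly Proposition~\ref{prop21b}(1)(2) of the present paper, since $\cond(T)=D_{L_4}/D_{L_2}$ and $\cond(T)=D_{L_6}/(D_{L_2}D_{L_3})$ in these two cases. With your stated values $b=1$ you would be contradicting Conjecture~\ref{conj1b}, not verifying it. You should also check that your enumeration of the $a(H)=1$ subgroups is complete: there are two $\GL_2(\Z)$-conjugacy classes of $D_6$, and only one of them is the exceptional $H_{12,A}$.
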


In the above proposition, the following version of the Cohen-Lenstra heuristics was used. Denote by $\NF_2^{+}$ (resp. $\NF_2^{-}$) the set of all real (resp. imaginary) quadratic fields whose elements are ordered by the absolute values of their discriminants. For a number field $K$ and a prime $p$, denote the size of the $p$-torsion subgroup of the class group of $K$ by $h_p(K)$. For an odd prime $p$ and a positive integer $\alpha$, consider the following version of the Cohen-Lenstra heuristics.

\begin{itemize}
\item (\cite[(C10)]{CL84}) $\text{Conj}^+(p, \alpha)$ : The average of $\displaystyle \prod_{0 \leq i < \alpha} (h_p(K)-p^i)$ for $K \in \NF_2^{+}$ is $p^{-\alpha}$.

\item (\cite[(C6)]{CL84}) $\text{Conj}^-(p, \alpha)$ : The average of $\displaystyle \prod_{0 \leq i < \alpha} (h_p(K)-p^i)$ for $K \in \NF_2^{-}$ is $1$.
\end{itemize} 

The above conjectures are true for $p=3$ and $\alpha = 1$ by \cite[Theorem 3]{DH71}, but they are still open for the other cases. In many cases the assumption of the conjectures for $p=3$ and every $\alpha > 0$ improves the upper bounds for counting tori over $\Q$.

\begin{conjecture} \label{conj1d}
$\text{Conj}^+(3, \alpha)$ and $\text{Conj}^-(3, \alpha)$ are true for every positive integer $\alpha$. 
\end{conjecture}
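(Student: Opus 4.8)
The cases $\alpha = 1$ are the theorem of Davenport--Heilbronn \cite{DH71} recalled above, so the substance is $\alpha \geq 2$, which I would attack through the classical reflection between $3$-class groups of quadratic fields and generalized dihedral number fields. Fix $p = 3$, let $K$ be quadratic with $\Gal(K/\Q) = \langle \sigma \rangle$, and put $V := (\Z/p)^{\alpha}$. Since $\mathrm{Cl}(\Q) = 1$, the endomorphism $1 + \sigma$ of $\mathrm{Cl}(K)$ (the composite of the norm with extension of ideals) vanishes, so $\sigma$ acts by $-1$; in particular \emph{every} subgroup $B \leq \mathrm{Cl}(K)$ is $\sigma$-stable, and by class field theory a subgroup with $\mathrm{Cl}(K)/B \cong V$ produces an unramified $V$-extension $L/K$ that is automatically Galois over $\Q$ with $\Gal(L/\Q) \cong \mathrm{Dih}(V) = V \rtimes_{-1} \Z/2$. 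Conversely every such field arises this way, so combining with the elementary identity
\[
\prod_{0 \leq i < \alpha}\bigl(h_p(K) - p^{i}\bigr) \;=\; \#\mathrm{Sur}\bigl(\mathrm{Cl}(K),\, V\bigr) \;=\; |\GL_{\alpha}(\F_p)| \cdot \#\{\, B \leq \mathrm{Cl}(K) : \mathrm{Cl}(K)/B \cong V \,\},
\]
the first step is to rewrite $\sum_{K \in \NF_2^{\pm},\, |\disc K| \leq Y} \prod_{0 \leq i < \alpha}(h_p(K) - p^{i})$ as $|\GL_{\alpha}(\F_p)|$ times the number of Galois $\mathrm{Dih}(V)$-extensions $L/\Q$ that are unramified over their quadratic resolvent $K$, with $K \in \NF_2^{\pm}$ and $|\disc K| \leq Y$. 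The conjecture then says that, after dividing by $\#\{K \in \NF_2^{\pm} : |\disc K| \leq Y\} \sim c^{\pm} Y$ and letting $Y \to \infty$, this count tends to $p^{-\alpha}$ (real) and to $1$ (imaginary).

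The crux is an asymptotic with $o(Y)$ error for this family of dihedral fields. For $\alpha = 1$ one has $\mathrm{Dih}(\Z/p) = S_3$, and this is exactly the Davenport--Heilbronn count of cubic fields of fundamental discriminant: use the Delone--Faddeev/Bhargava parametrization of cubic rings by integral binary cubic forms, sieve for maximality and for the ``unramified over the resolvent'' local conditions, and recover the predicted $\tfrac13$ and $1$ (Bhargava--Shankar--Tsimerman and Taniguchi--Thorne even produce a secondary term). For $\alpha \geq 2$ I see three routes, each of which is the real difficulty: (i) build $L$ as a tower $K \subset L_1 \subset \cdots \subset L_{\alpha} = L$ of unramified degree-$p$ steps and iterate a Davenport--Heilbronn-type argument over each $L_j$ --- but the $L_j$ are non-abelian of degree $2p^{j}$, so this requires Cohen--Lenstra input for them and is essentially circular; (ii) realize $\mathrm{Dih}(V)$-rings as orbits in a single coregular or prehomogeneous representation whose fundamental invariant is a power of the resolvent discriminant, and run geometry of numbers with control of the cusp and of the $p$-adic densities --- but no such parametrization is known for $\alpha \geq 2$; (iii) evaluate the moments analytically, expressing $\#\mathrm{Sur}(\mathrm{Cl}(K), V)$ through Kummer theory over $K(\zeta_p)$ and summing over $K$ by character-sum and $L$-function methods in the spirit of the known computations of the $4$-rank of quadratic fields.

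The central obstacle --- and the reason the conjecture is open for $\alpha \geq 2$ --- is precisely the absence of either a usable parametrization of $\mathrm{Dih}((\Z/3)^{\alpha})$-extensions or an analytic substitute that survives the full average over $\NF_2^{\pm}$: in contrast with the $2$-part, where R\'edei symbols and governing fields give a computable descent (and ultimately Smith's equidistribution theorems), there is at present no $p = 3$ analogue delivering the higher moments over the entire family. A realistic intermediate target I would pursue first is the function-field analogue over $\F_q(t)$ with $q \equiv 1 \pmod 3$ and $q \to \infty$, where homological-stability methods in the style of Ellenberg--Venkatesh--Westerland control the averages of $\#\mathrm{Sur}(\mathrm{Cl}, (\Z/3)^{\alpha})$ and should isolate the constants $3^{-\alpha}$ and $1$; transferring this to fixed $q$, let alone to $\Q$, is the step that seems genuinely out of reach with current technology.
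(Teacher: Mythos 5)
There is nothing to compare here, because the statement you were asked about is not a theorem of the paper: it is the paper's Conjecture \ref{conj1d}, i.e.\ the Cohen--Lenstra heuristics $\text{Conj}^{\pm}(3,\alpha)$ of \cite{CL84} for all $\alpha$, which the paper explicitly records as known only for $\alpha=1$ (Davenport--Heilbronn \cite{DH71}) and open otherwise, and which is used in the paper purely as a hypothesis (e.g.\ in Proposition \ref{prop43a} and Theorem \ref{thm43g}). Consequently the paper contains no proof of this statement, and your submission, as you yourself acknowledge in its final paragraphs, does not contain one either: it is a correct reformulation plus a survey of strategies, not an argument that establishes the conjecture.

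On the substance of what you wrote: the preparatory reductions are sound. The identity $\prod_{0\leq i<\alpha}(h_p(K)-p^i)=\#\mathrm{Sur}(\mathrm{Cl}(K),(\Z/p)^{\alpha})$ is correct, the observation that $\sigma$ acts by $-1$ on $\mathrm{Cl}(K)$ (since $\mathrm{Cl}(\Q)=1$) is correct, and the resulting bijection between index-$p^{\alpha}$ quotients of $\mathrm{Cl}(K)$ of type $(\Z/p)^{\alpha}$ and unramified generalized dihedral $\mathrm{Dih}((\Z/3)^{\alpha})$-extensions of $\Q$ with quadratic resolvent $K$ is the standard class-field-theoretic translation; for $\alpha=1$ this does recover the Davenport--Heilbronn statement cited in the paper. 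But the entire content of the conjecture is the asymptotic, with error $o(Y)$, for the number of such dihedral extensions averaged over $K\in\NF_2^{\pm}$ with $|\disc K|\leq Y$, and for $\alpha\geq 2$ none of your three routes is carried out: (i) is, as you note, circular; (ii) presupposes a parametrization that is not known; (iii) is a program, not a computation, and the function-field analogue you propose as an intermediate target would in any case not yield the statement over $\Q$. So the gap is total: the step ``evaluate the $\alpha$-th moment for $\alpha\geq 2$'' is exactly the open problem, and nothing in the proposal closes it. The honest conclusion is that the statement should be treated as what the paper says it is --- a conjecture assumed as input --- rather than something you can prove.
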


The purpose of this paper is to count the number of $3$-dimensional tori over $\Q$ by Artin conductor. First we classify the $3$-dimensional tori over $\Q$ and compute their Artin conductors in Section \ref{Sec3}. The classification is much more complicated compared to the $2$-dimensional case. After that, we estimate the magnitude of $N_3^{\tor}(X; H)$ for each finite nontrivial subgroup $H$ of $\GL_3(\Z)$ in Section \ref{Sec4}. We do this for abelian $H$ in Section \ref{Sub41} and non-abelian $H$ in Section \ref{Sub42} and \ref{Sub43}. The results can be summarized as follow. 

\begin{theorem} \label{thm1e}
Let $H$ be a finite nontrivial subgroup of $\GL_3(\Z)$.
\begin{enumerate}
    \item (Proposition \ref{prop41a}, \ref{prop41c}) Conjecture \ref{conj1b} is true for every abelian $H$.
    
    \item (Theorem \ref{thm42e}) $X^{\frac{1}{a(H)}} \ll N_3^{\tor}(X; H)$ for every $H$.
    
    \item (Proposition \ref{prop41a}, \ref{prop41c}, \ref{prop42a}, \ref{prop43a}, \ref{prop43b}) Under the assumption of Conjecture \ref{conj1d} and Malle's conjecture for quartic $A_4$-fields, we have
    $$
    N_3^{\tor}(X; H) \ll_{\varepsilon} X^{\frac{1}{a(H)}} (\log X)^{6 + \varepsilon}
    $$
    for $67$ out of $72$ conjugacy classes of finite nontrivial subgroups of $\GL_3(\Z)$. 
\end{enumerate}
\end{theorem}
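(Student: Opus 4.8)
The plan is to establish Theorem~\ref{thm1e} by treating the three parts separately, with the bulk of the work going into the case analysis of part~(3). For part~(1), I would begin from the classification of $3$-dimensional tori over $\Q$ (Section~\ref{Sec3}) together with the explicit formula for the Artin conductor. For an abelian $H \leq \GL_3(\Z)$, a torus $T$ with $\mathrm{Aut}$-image $H$ corresponds (via the Galois action on the character lattice) to a surjection from $G_\Q$ onto a subgroup of $H$, and since $H$ is abelian this is governed by the conductor-discriminant formula applied to abelian extensions of $\Q$. Counting these amounts to a Dirichlet-series / Tauberian argument: the relevant generating function is a finite Euler product whose rightmost pole at $s=1/a(H)$ has order exactly $b(H)$, where $a(H)$ and $b(H)$ are as defined in Conjecture~\ref{conj1b}. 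This yields the exact asymptotic, so Conjecture~\ref{conj1b} holds on the nose for abelian $H$; this is Propositions~\ref{prop41a} and~\ref{prop41c}.

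For part~(2), the lower bound $X^{1/a(H)} \ll N_3^{\tor}(X;H)$, the strategy is to exhibit a positive-proportion family of tori with $\mathrm{Aut}$-image exactly $H$ and conductor $\ll X$. Pick $h \in H \setminus \{I_3\}$ realizing the minimum $a(H) = \mathrm{rank}(h-I_3)$. One builds tori whose character lattice carries a $G_\Q$-action through $H$ but which are "as close to split as possible," so that the conductor is controlled by a single ramified prime (or a controlled set of primes), each contributing a bounded power. Choosing the ramification to range over $\gg X^{1/a(H)}$ admissible primes or prime powers, and checking that distinct choices give non-isomorphic tori and that the $\mathrm{Aut}$-image does not collapse to a proper subgroup, gives the bound. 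This is Theorem~\ref{thm42e}, and it is the least delicate of the three.

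The heart of the paper, and the main obstacle, is part~(3): the uniform upper bound $N_3^{\tor}(X;H) \ll_\varepsilon X^{1/a(H)}(\log X)^{6+\varepsilon}$ for $67$ of the $72$ conjugacy classes. Here I would go through the classification of finite nontrivial subgroups $H \leq \GL_3(\Z)$ case by case, splitting into abelian $H$ (already handled in part~(1), where Conjecture~\ref{conj1b} gives the sharp power of $\log$), then the non-abelian cases organized by the structure of $H$ and its action on $\Z^3$ (Sections~\ref{Sub42}, \ref{Sub43}). For each $H$, a torus with that $\mathrm{Aut}$-image is pinned down by an étale $\Q$-algebra (equivalently, a number field or product of number fields of bounded degree) together with extra gluing data, and the Artin conductor of $T$ translates into the discriminant of that algebra up to bounded factors. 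The counting then reduces to: (i) counting number fields of degree $\leq 6$ with prescribed Galois group and bounded discriminant — for most groups this is classical (abelian, $S_3$, $D_4$, etc.), but the $A_4$-quartic case genuinely requires an input beyond current unconditional knowledge, which is why Malle's conjecture for quartic $A_4$-fields is assumed; and (ii) controlling a class-group or unit contribution coming from the non-canonical part of the gluing data — this is precisely where the Cohen–Lenstra heuristic $\mathrm{Conj}^\pm(3,\alpha)$ enters, bounding averages of $h_3$ of the relevant quadratic resolvents. The bookkeeping that the accumulated powers of $\log X$ never exceed $6+\varepsilon$, uniformly over all $67$ classes, together with identifying the $5$ exceptional classes where the method stalls (presumably those whose associated counting problem is equivalent to, or harder than, a still-open number-field count analogous to $H_{12,A}$ in the $2$-dimensional case), is the technically demanding part. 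I would expect the $A_4$-related subgroups and any subgroup forcing an $S_4$- or $A_4$-quartic resolvent to be the locus of difficulty, and the Cohen–Lenstra reduction to be the step requiring the most care to keep the exponent of $\log X$ under control.
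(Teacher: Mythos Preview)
Your high-level plan is in the right spirit, but two points diverge from the paper in ways that matter.

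\medskip
\textbf{Part (2) is not ``the least delicate,'' and your proposed mechanism is not what the paper does.} You suggest building a family of tori ramified at a single controlled prime, varying that prime over $\gg X^{1/a(H)}$ choices. The paper never does this. Instead, for each $H$ it uses the explicit Artin-conductor formula from Section~\ref{Sec3} to bound $C(T)$ above by a known number-field invariant, and then invokes an existing asymptotic (Davenport--Heilbronn, Bhargava, Masri--Thorne--Tsai--Wang, Shankar--Varma, etc.). For example, for $H_{8,g}$ one shows $C(T)^2 \leq D_L$ and applies the $D_4$-octic count; for $H_{12,f}$ one shows $C(T)^2 \geq D_1$ with $L_1 \in \NF_6(D_6)$. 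The genuinely hard case is $H_{24,b}$ (equivalently $H_{24,a}$, $H_{24,c}$), where $C(T) = D_8/(D_4 D_2)$ for an $A_4 \times C_2$ configuration. Here the paper (Proposition~\ref{prop42c}) fixes the specific cyclic cubic $E = \Q(\zeta_7)^+$, invokes the Cohen--Thorne Dirichlet series for quartic $A_4$-fields with cubic resolvent $E$, proves via Lemma~\ref{lem23b} that this series lies in $\mathcal{M}(1,\tfrac12)$, and controls the conductor ratio by a direct local computation using Proposition~\ref{prop22b}. A one-prime-ramification argument would not obviously produce $\gg X$ such tori, and in any case you would still need to verify that the image is all of $A_4 \times C_2$ rather than a proper subgroup.

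\medskip
\textbf{Part (3): your guess at the five exceptional classes is wrong.} The exceptions are $H_{8,g}$, $H_{8,k}$ (octic $D_4$, conductor $D_K D_{M_2}/D_{K_2}$) and $H_{24,e}$, $H_{24,g}$, $H_{24,i}$ (an $S_4$ invariant, conductor $D_8/(D_4 D_2)$). None of them involves $A_4$. Malle's conjecture for quartic $A_4$-fields is used only for $H_{12,i}$ (and its conductor-equivalent classes $H_{12,j}$, $H_{12,k}$), where it gives the exact asymptotic; those classes are among the 67, not the 5. The Cohen--Lenstra input is also more localized than you suggest: it enters only for $H_{12,b}$, $H_{12,c}$, $H_{24,d}$ (Proposition~\ref{prop43a}), where the conductor formula produces an $h_3$-weighted sum over quadratic fields; it is not a general ``gluing data'' phenomenon. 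The remaining non-abelian upper bounds (Proposition~\ref{prop43b}) use $h_2$-averages over cubic fields (via Bhargava's quartic count) and divisor-sum estimates, not $h_3$.

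\medskip
Part~(1) is broadly right, though note that most abelian cases are dispatched by citing known asymptotics and the product-distribution lemma (Proposition~\ref{prop21c}), with a direct Tauberian argument needed only for $H_{6,c}$ and $H_{6,d}$.
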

It is notable that the ratios of the upper and lower bounds of $N_3^{\tor}(X; H)$ are $(\log X)^{O(1)}$ for most of the finite subgroups $H$ of $\GL_3(\Z)$. Since we have
$$
N_3^{\tor}(X) = \sum_{H} N_3^{\tor}(X; H)
$$
where $H$ runs through the conjugacy classes of finite nontrivial subgroups of $\GL_3(\Z)$, we can bound the size of $N_3^{\tor}(X)$. The following theorem on the upper bound of $N_3^{\tor}(X)$ is the main result of the paper.

\begin{theorem} \label{thm1f}
(Theorem \ref{thm43g})
\begin{enumerate}
    \item We have
    \begin{equation} \label{eq1f1}
    N_3^{\tor}(X) \ll_{\varepsilon} X^{1 + \frac{\log 2 + \varepsilon}{\log \log X}}.
    \end{equation}
    
    \item Under the assumption of Conjecture \ref{conj1d}, we have
    \begin{equation} \label{eq1f2}
    N_3^{\tor}(X) \ll X (\log X)^4 \log \log X.
    \end{equation}
\end{enumerate}
\end{theorem}

\section{Preliminaries} \label{Sec2}

\subsection{Known results on counting number fields} \label{Sub21}

In this section, $C$ denotes a positive constant which may change from line to line. First we give a list of known cases of Malle's conjecture which will be used in the sequel. 

\begin{proposition} \label{prop21a}
The equation (\ref{eq1a}) holds for the following cases: 
\begin{enumerate}
    \item (\cite{Mak85}) $G \leq S_{\left | G \right |}$, $G$ abelian
    
    \item (\cite{DH71, CDO02, Bha05}) $S_3 \leq S_3$, $D_4 \leq S_4$, $S_4 \leq S_4$ 
    
    \item (\cite[Theorem 1.1]{MTTW20}) $D_6 \cong S_3 \times C_2 \leq S_6$ (i.e. $N_6(X; D_6) \sim CX^{\frac{1}{2}}$)
    
    \item (\cite[Theorem 1]{SV21}) $D_4 \leq S_8$ (i.e. $N_8(X; D_4) \sim CX^{\frac{1}{4}} (\log X)^2$).
\end{enumerate}
\end{proposition}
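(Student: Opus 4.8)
Since the asymptotic (\ref{eq21a}) is known in each of the listed cases, the plan is to invoke the four cited references; for orientation I indicate how each argument runs and where its real difficulty lies. Two circles of ideas are in play: class field theory combined with a Dirichlet-series/Tauberian analysis (for the solvable cases (1), $D_4\le S_4$ in (2), and (4)), and the geometry of numbers via parametrizations of rings of low rank (for $S_3\le S_3$ and $S_4\le S_4$ in (2)); case (3) is obtained by coupling a cubic count with a quadratic count.

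For abelian $G$ in its regular representation, class field theory identifies a $G$-extension $K/\Q$ with a surjection from a ray class group $(\Z/m\Z)^\times$ onto $\widehat G$, and the conductor--discriminant formula gives $D_K=\prod_{\chi\in\widehat G}\mathfrak f(\chi)$ in terms of the conductors of the associated Dirichlet characters. Counting fields then reduces, after M\"obius inversion over the subgroup lattice of $G$ to pass from surjections to fields, to estimating a Dirichlet series $\sum_K D_K^{-s}$; this series has an Euler product and a meromorphic continuation slightly past its rightmost pole, which sits at $s=1/a(G)$ and has order $b(G)$, so a standard Tauberian theorem yields $c_G X^{1/a(G)}(\log X)^{b(G)-1}$, matching Malle's prescription. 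The case $D_4\le S_4$ is the same idea relative to a varying base: a $D_4$-quartic field has a distinguished quadratic subfield $F$, one counts its quartic overfields as relative quadratic extensions of $F$ organized by relative conductor via class field theory over $F$, and then sums over $F$.

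For $S_3\le S_3$ one uses Davenport--Heilbronn: the Delone--Faddeev parametrization identifies cubic rings with $\GL_2(\Z)$-orbits on integral binary cubic forms, preserving discriminants, so one counts orbits of bounded discriminant by counting lattice points in a fundamental domain (Davenport's lemma), discards the cusp, and sieves away the non-maximal orders. For $S_4\le S_4$ one uses Bhargava: quartic rings correspond to $\GL_2(\Z)\times\GL_3(\Z)$-orbits on pairs of integral ternary quadratic forms, and the lattice-point count must now handle a higher-rank group on a fundamental domain with several cusps --- requiring delicate thickening/averaging arguments and a uniform sieve isolating the maximal rings (the rings of integers) --- after which the $S_4$ asymptotic is extracted from the total quartic count by removing the lower-order $C_4$, $V_4$, $A_4$ contributions and the $D_4$ contribution handled above.

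A $D_6\cong S_3\times C_2$ sextic field is the compositum of an $S_3$-cubic field $K$ and a quadratic field $F$, and $D_L$ is a known function of $D_K$, $D_F$ and the primes ramified in both; hence $N_6(X;D_6)$ is a convolution of the cubic and quadratic counting functions, with the main term, of size $CX^{1/2}$, coming from the range $D_K^{2}\le X$ --- here one needs a power-saving error term in the cubic count, namely the Bhargava--Shankar--Tsimerman and Taniguchi--Thorne refinements of Davenport--Heilbronn, together with a careful local analysis at the common ramified primes. For $D_4\le S_8$ in its regular representation the degree-$8$ field is itself $D_4$-Galois, so one builds it from its $V_4$-triquadratic subfield by solving the central embedding problem $1\to C_2\to D_4\to V_4\to 1$, whose solutions, when unobstructed, form a torsor under quadratic extensions; the conductor--discriminant formula $D_K=\mathfrak f(\rho)^{2}\prod_{j=1}^{3}\mathfrak f(\chi_j)$, with $\rho$ the $2$-dimensional character and $\chi_1,\chi_2,\chi_3$ the quadratic characters of $V_4$, then gives a Dirichlet series whose main contribution, of size $CX^{1/4}(\log X)^{2}$, comes from the regime where $\mathfrak f(\rho)^{2}$ dominates. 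The hardest ingredient is not uniform across the list: for $S_3$ and especially $S_4$ it is the geometry of numbers --- controlling the cusps and proving a uniform maximality sieve; for $D_6$ it is producing and then exploiting a sufficiently strong error term in the cubic count; and in the solvable cases it is the intricate embedding-problem analysis and bookkeeping needed to pin down the relevant conductors and match the exponents to $a(G)$ and $b(G)$.
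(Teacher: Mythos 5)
Your proposal is correct and coincides with the paper's treatment: Proposition \ref{prop21a} is stated there purely as a collection of known results, with the proof consisting of the citations to M\"aki, Davenport--Heilbronn, Cohen--Diaz y Diaz--Olivier, Bhargava, Masri--Thorne--Tsai--Wang and Shankar--Varma, exactly as you invoke them. Your accompanying sketches of how each cited result is established (class field theory plus Tauberian arguments for the solvable cases, geometry of numbers for $S_3$ and $S_4$, compositum/convolution counting for $D_6\leq S_6$) are accurate in outline but are not required by the paper, which offers no proof beyond the references.
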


There also has been some progress on counting number fields by an invariant other than the discriminant. 
For a positive integer $n \geq 2$ and a transitive subgroup $G \leq S_n$, let $\NF_n(G)$ be the set of degree $n$ number fields such that the Galois group $\Gal(K^c/\Q)$ is permutation-isomorphic to $G$. 
Let $I$ be an invariant of number fields such that for every $X>0$, there are finitely many number fields $K$ such that $I(K) \leq X$. 
Denote by $N_n(X; G; I)$ the number of $K \in \NF_n(G)$ such that $I(K) \leq X$. 
Sometimes we write $N_n(X; G; I)$ by $N_{n}^{K}(X; G; I)$ to clarify that $I$ is an invariant of $K$. 
It is clear from the definition that $N_n(X; G; I) = N_n(X; G)$ if $I(K) = D_K$. Denote $\NF_2(C_2)$ (resp. $N_2(X; C_2; I)$) by $\NF_2$ (resp. $N_2(X; I)$) for simplicity.

The next proposition is a collection of results on the asymptotics of $N_n(X; G; I)$ where $I$ is an invariant other than the discriminant. The first two results are direct consequences of the work of M\"aki \cite{Mak93} on the asymptotics of the number of abelian number fields with bounded conductor (cf. \cite[Section 2.3]{Lee21}).

\begin{proposition} \label{prop21b}
\begin{enumerate}
    \item For $L_4 \in \NF_4(C_4)$, denote its unique quadratic subfield by $L_2$. Then
    \begin{equation} \label{eq21b}
N_4(X; C_4; \frac{D_{L_4}}{D_{L_2}}) \sim C X^{\frac{1}{2}} \log X.
\end{equation}

    \item For $L_6 \in \NF_6(C_6)$, denote its unique cubic (resp. quadratic) subfield by $L_3$ (resp. $L_2$). Then
    \begin{equation} \label{eq21c}
N_6(X; C_6; \frac{D_{L_6}}{D_{L_2}D_{L_3}}) \sim C X^{\frac{1}{2}} (\log X)^2.
\end{equation}
    
    \item (\cite[Theorem 1]{ASVW21}) For $L_4 \in \NF_4(D_4)$, denote its unique quadratic subfield by $L_2$. Then
    \begin{equation} \label{eq21d}
    N_4(X; D_4; \frac{D_{L_4}}{D_{L_2}}) = C X \log X + O(X \log \log X).
    \end{equation}
\end{enumerate}
\end{proposition}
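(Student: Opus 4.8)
The plan is to derive parts (1) and (2) from M\"aki's theorem on abelian number fields of bounded conductor by a short conductor--discriminant computation; part (3) is quoted verbatim from \cite{ASVW21} and needs no separate argument. For (1), I would write $L_4 \in \NF_4(C_4)$ as the fixed field of $\ker\chi$ for a primitive Dirichlet character $\chi$ of order $4$, so that the quadratic subfield $L_2$ is cut out by $\chi^2$. Writing $\mathfrak{f}(\cdot)$ for the conductor of a character, the conductor--discriminant formula gives $D_{L_4} = \mathfrak{f}(\chi)\,\mathfrak{f}(\chi^2)\,\mathfrak{f}(\chi^3)$ and $D_{L_2} = \mathfrak{f}(\chi^2)$; since $\chi^3 = \overline{\chi}$ has the same conductor as $\chi$ and $\mathfrak{f}(\chi^2) \mid \mathfrak{f}(\chi)$, this yields
\[
\frac{D_{L_4}}{D_{L_2}} \;=\; \mathfrak{f}(\chi)^2 \;=\; \mathfrak{f}(L_4)^2,
\]
where $\mathfrak{f}(L_4)$ is the conductor of the field. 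Hence the set in (1) is exactly $\{\, L_4 \in \NF_4(C_4) : \mathfrak{f}(L_4) \le X^{1/2} \,\}$, and I would conclude using M\"aki's asymptotic (see \cite[Section~2.3]{Lee21}) that the number of $C_4$-extensions of $\Q$ of conductor at most $Y$ is $\sim c\,Y\log Y$: with $Y = X^{1/2}$ this gives $\sim c\,X^{1/2}\log(X^{1/2}) = C X^{1/2}\log X$.

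For (2), I would write $L_6 \in \NF_6(C_6)$ so that its cubic subfield $L_3$ is cut out by a primitive order-$3$ character $\psi$ (together with $\psi^2$) and its quadratic subfield $L_2$ by a primitive order-$2$ character $\phi$; the two faithful characters of $\Gal(L_6/\Q)$ are then $\psi\phi$ and $\psi^2\phi$. The conductor--discriminant formula gives $D_{L_6} = \mathfrak{f}(\phi)\,\mathfrak{f}(\psi)\,\mathfrak{f}(\psi^2)\,\mathfrak{f}(\psi\phi)\,\mathfrak{f}(\psi^2\phi)$, $D_{L_3} = \mathfrak{f}(\psi)\,\mathfrak{f}(\psi^2)$ and $D_{L_2} = \mathfrak{f}(\phi)$. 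Since $\phi$ is real we have $\psi^2\phi = \overline{\psi\phi}$, so these have equal conductor; and since $\psi$ and $\phi$ have coprime orders, a prime-by-prime comparison of local components shows $\mathfrak{f}(\psi\phi) = \lcm(\mathfrak{f}(\psi),\mathfrak{f}(\phi)) = \mathfrak{f}(L_6)$. Therefore
\[
\frac{D_{L_6}}{D_{L_2}\,D_{L_3}} \;=\; \mathfrak{f}(\psi\phi)^2 \;=\; \mathfrak{f}(L_6)^2,
\]
so the set in (2) equals $\{\, L_6 \in \NF_6(C_6) : \mathfrak{f}(L_6) \le X^{1/2} \,\}$, and M\"aki's asymptotic --- that the number of $C_6$-extensions of $\Q$ of conductor at most $Y$ is $\sim c\,Y(\log Y)^2$ --- then gives $\sim c\,X^{1/2}(\log X^{1/2})^2 = C X^{1/2}(\log X)^2$.

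The only points that require care are the two conductor--discriminant identities above (especially the divisibilities that make exactly $\mathfrak{f}(L)^2$ appear, rather than some proper divisor times the subfield conductors) and the precise power of $\log Y$ in M\"aki's theorem for $C_4$ and $C_6$. The latter comes down to showing that $\sum_{\ord\chi = 4}\mathfrak{f}(\chi)^{-s}$ has a pole of order $2$, and $\sum_{\ord\chi = 6}\mathfrak{f}(\chi)^{-s}$ a pole of order $3$, at $s=1$ --- visible from the Euler products $\prod_{p\equiv 1\,(4)}(1 + 3p^{-s})\prod_{p\equiv 3\,(4)}(1 + p^{-s}) \sim \zeta(s)^2$ and $\prod_{p\equiv 1\,(6)}(1 + 5p^{-s})\prod_{p\equiv 5\,(6)}(1 + p^{-s}) \sim \zeta(s)^3$ near $s=1$, after discarding the lower-order contribution of characters of order dividing $2$ or $3$ --- followed by a standard Tauberian step. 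I do not expect a genuine obstacle here: the translation is already set up in \cite{Lee21}, and the substance is entirely in M\"aki's theorem \cite{Mak93}.
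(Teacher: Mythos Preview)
Your proposal is correct and follows essentially the same route as the paper: the paper gives no detailed argument either, simply noting that (1) and (2) are ``direct consequences of the work of M\"aki \cite{Mak93} on the asymptotics of the number of abelian number fields with bounded conductor (cf.\ \cite[Section~2.3]{Lee21}),'' while (3) is quoted from \cite{ASVW21}. Your write-up spells out the conductor--discriminant identities $D_{L_4}/D_{L_2}=\mathfrak f(L_4)^2$ and $D_{L_6}/(D_{L_2}D_{L_3})=\mathfrak f(L_6)^2$ that the paper leaves to the cited reference, and your verification of $\mathfrak f(\psi\phi)=\lcm(\mathfrak f(\psi),\mathfrak f(\phi))$ via the coprimality of the orders is exactly the content behind the paper's later use of $\cond(L_6)=\lcm(\cond(L_2),\cond(L_3))$ in Lemma~\ref{lem41b}.
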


For $L_6 \in \NF_6(D_6)$, denote its unique cubic (resp. quadratic) subfield by $L_3$ (resp. $L_2$). Then
$$
N_2^{\tor}(X; H_{12, A}) = N_6(X; D_6; \frac{D_{L_6}}{D_{L_3}D_{L_2}})
$$
(cf. \cite[Section 4.1]{Lee21}) so Proposition \ref{prop1c} implies that
\begin{equation} \label{eq21e}
N_6(X; D_6; \frac{D_{L_6}}{D_{L_3}D_{L_2}})
\ll_{\varepsilon} X^{1+\frac{\log 2 + \varepsilon}{\log \log X}}
\ll_{\varepsilon} X^{1+\varepsilon}
\end{equation}
and this can be improved to
\begin{equation} \label{eq21f}
N_6(X; D_6; \frac{D_{L_6}}{D_{L_3}D_{L_2}})
\ll_{\varepsilon} X (\log X)^{1+\varepsilon}
\end{equation}
under the assumption of Conjecture \ref{conj1d}.

We also introduce a proposition which concerns the product distribution appears in counting number fields. It is useful when we consider the compositum of two linearly disjoint number fields.

\begin{proposition} \label{prop21c}
Let $F_i(X) = \# \left \{ s \in S_i : s \leq X  \right \}$ ($i= 1, 2$) be the asymptotic distribution of some multi-set $S_i$ consists of a sequence of elements of $\R_{\geq 1}$. Suppose that $F_i(X) \sim A_i X^{n_i} (\log X)^{r_i}$ for $A_i >0$, $n_i>0$ and $r_i \in \Z_{\geq 0}$. Consider the product distribution
$$
P(X) := \# \left \{ (s_1, s_2) \in S_1 \times S_2 : s_1s_2 \leq X  \right \}.
$$
\begin{enumerate}
    \item (\cite[Lemma 3.1]{Wan21}) If $n_1=n_2=n$, then 
    $$
    P(X) \sim A_1A_2 \frac{r_1 ! r_2 !}{(r_1+r_2+1)!}nX^n (\log X)^{r_1+r_2+1}.
    $$
    \item (\cite[Lemma 3.2]{Wan21}) If $n_1 > n_2$, then there exists a constant $C>0$ such that
    $$
    P(X) \sim C X^{n_1} (\log X)^{r_1}.
    $$
\end{enumerate}
\end{proposition}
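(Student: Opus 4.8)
The plan is to collapse the two-dimensional count $P(X)$ to a one-dimensional summation against one of the distribution functions and then extract the leading asymptotics by an explicit change of variables. Grouping the pairs by the value of the first coordinate gives $P(X) = \sum_{s_1 \in S_1,\, s_1 \le X} F_2(X/s_1)$, which we view as a Riemann--Stieltjes integral $P(X) = \int_{1}^{X} F_2(X/t)\, dF_1(t)$. Beyond the stated asymptotics, the only extra input is the crude uniform bound $F_i(Y) \ll_i Y^{n_i}(\log(Y+2))^{r_i}$ valid for all $Y \ge 1$, which follows from $F_i(Y) \sim A_i Y^{n_i}(\log Y)^{r_i}$ together with $F_i$ being bounded on bounded ranges; this is what lets us discard the parts of the integral where the ratio $X/t$ (resp.\ $t$) is too small for the main term of the asymptotic of $F_2$ (resp.\ $F_1$) to be visible.

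For part (1), with $n_1 = n_2 = n$, truncate the sum at $s_1 \le X/\omega(X)$ for a slowly growing $\omega$ (say $\omega(X)=(\log X)^{1/(2n)}$). On the truncated range $X/s_1 \ge \omega(X) \to \infty$, so $F_2(X/s_1)$ may be replaced by $A_2(X/s_1)^n(\log(X/s_1))^{r_2}$ with relative error $o(1)$ uniformly; the discarded tail $s_1 > X/\omega(X)$ contributes $\ll X^n(\log X)^{r_1}\,\omega(X)^n(\log\omega(X))^{r_2} = o\!\left(X^n(\log X)^{r_1+r_2+1}\right)$ by the uniform bound. Converting the truncated sum to an integral against $F_1$, inserting $F_1(t)\sim A_1 t^n(\log t)^{r_1}$ (whose differential is dominated by $n\, t^{n-1}(\log t)^{r_1}\,dt$, the other term being smaller by a factor $\log t$), and substituting $t = X^u$ with $\log t = u\log X$, $\log(X/t) = (1-u)\log X$, one is left with
\[
P(X) \sim A_1 A_2\, n\, X^{n}(\log X)^{r_1+r_2+1}\int_{0}^{1} u^{r_1}(1-u)^{r_2}\, du,
\]
and the Beta integral equals $r_1!\,r_2!/(r_1+r_2+1)!$, which is the asserted constant.

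For part (2), with $n_1 > n_2$, group instead by the second coordinate: $P(X) = \sum_{s_2 \in S_2,\, s_2 \le X} F_1(X/s_2)$. For each fixed $s_2$ one has $F_1(X/s_2) \sim A_1 (X/s_2)^{n_1}(\log X)^{r_1}$ as $X\to\infty$, so summing the main terms requires the convergence of $\sum_{s_2 \in S_2} s_2^{-n_1}$; this is precisely where $n_1 > n_2$ enters, since $F_2(Y)\sim A_2 Y^{n_2}(\log Y)^{r_2}$ and partial summation give convergence to a finite constant $D_2 := \sum_{s_2 \in S_2} s_2^{-n_1}$. Splitting the sum at a parameter $T$ sent to infinity after $X$: the head $s_2 \le T$ yields $A_1 D_2\, X^{n_1}(\log X)^{r_1}(1+o(1))$, while the tail $T < s_2 \le X$ is $\ll X^{n_1}(\log X)^{r_1}\sum_{s_2 > T} s_2^{-n_1}$, which tends to $0$ relative to the main term as $T\to\infty$ by convergence of the series and the uniform bound. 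Hence $P(X) \sim A_1 D_2\, X^{n_1}(\log X)^{r_1}$, i.e.\ $C = A_1 D_2$.

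The main obstacle is not any single computation but the bookkeeping of error terms: one has to choose the cut-offs (the slowly growing $\omega$ in part (1), the parameter $T$ in part (2)) so that the replacement error from passing to the asymptotic form of $F_i$ and the truncation error from the discarded range are simultaneously $o$ of the main term. The uniform polynomial-times-log bound on each $F_i$ makes all of these estimates routine, but they must be arranged so the two competing errors stay under control at once. Alternatively, since the statement is quoted verbatim, one may simply invoke \cite[Lemmas 3.1 and 3.2]{Wan21}.
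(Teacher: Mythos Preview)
The paper does not give its own proof of this proposition: it is stated as a quotation of \cite[Lemmas~3.1 and~3.2]{Wan21} and used as a black box. Your sketch is a correct outline of the standard argument (Abel summation against one distribution, truncation by a slowly growing cut-off, and the Beta-integral evaluation in the equal-exponent case; dominated convergence via the convergent Dirichlet-type series $\sum_{s_2} s_2^{-n_1}$ in the unequal-exponent case), and you already note at the end that one may simply invoke the cited lemmas, which is exactly what the paper does.
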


\subsection{Discriminants of number fields} \label{Sub22}

For some Galois extensions of number fields, there are algebraic relations between the discriminants of their subextensions. We provide such relations for Galois extensions whose Galois group is isomorphic to one of the groups $C_2^2$, $S_3$, $A_4$ and $S_4$. The formulas in the following proposition will be used frequently in Section \ref{Sec3}. 
\begin{proposition} \label{prop22a}
Let $L/K$ be a Galois extension of number fields. 
\[ 
\begin{tikzcd}[column sep=0.8em]
& L \arrow[ld, no head] \arrow[d, no head] \arrow[rd, no head] & \\
K_1 \arrow[rd, no head] & K_2 \arrow[d, no head]  & K_3 \arrow[ld, no head] \\
& K  & 
\end{tikzcd}
\quad
\begin{tikzcd}[column sep=0.8em]
& L \arrow[ddd, "S_3", no head] \arrow[rdd, "3", no head] & \\
L_3 \arrow[ru, "2", no head] \arrow[rdd, no head] &   & \\
& & L_2 \\
& K \arrow[ru, no head]  & 
\end{tikzcd}
\quad
\begin{tikzcd}[column sep=0.8em]
&  & L \arrow[ld, "2"', no head] \arrow[rdd, "3", no head] \arrow[ddd, "A_4", no head] & \\
& L_6 \arrow[ld, "2"', no head] \arrow[rdd, no head] &  & \\ 
L_3 \arrow[rrd, no head] &   &   & L_4 \arrow[ld, no head] \\
&  & K  &   
\end{tikzcd}
\quad
\begin{tikzcd}[column sep=0.8em]
&  & L \arrow[ld, "C_2^2"', no head] \arrow[rdd, "S_3", no head] \arrow[ddd, "S_4", no head] & \\
& L_6 \arrow[ld, "2"', no head] \arrow[rdd, no head] &  & \\
L_3 \arrow[rrd, no head] &   &   & L_4 \arrow[ld, no head] \\
&  & K  &   
\end{tikzcd}
\]

\begin{enumerate}
    \item Assume that $\Gal(L/K) \cong C_2^2$ and let $K_1$, $K_2$ and $K_3$ be the quadratic subextensions of $L/K$. Then we have
    \begin{equation} \label{eq22a}
    D_L D_K^2 = D_{K_1}D_{K_2}D_{K_3}.
    \end{equation}
    
    \item Assume that $\Gal(L/K) \cong S_3$, let $L_3$ be one of the cubic subextensions of $L/K$ and $L_2$ be the unique quadratic subextension of $L/K$. Then we have
    \begin{equation} \label{eq22b}
    D_L D_K^2 = D_{L_3}^2D_{L_2}.
    \end{equation}
    
    \item Assume that $\Gal(L/K) \cong A_4$, let $L_6$ be one of the sextic subextensions of $L/K$, $L_4$ be one of the quartic subextensions of $L/K$ and $L_3$ be the unique cubic subextension of $L/K$. (In this case $L_3$ is a subfield of $L_6$.) Then we have the following formulas: 
    \begin{subequations} 
\begin{align}
D_L D_K^3 &= D_{L_4}^3D_{L_3} \label{eq22c1} \\
   D_{L_6}D_K &= D_{L_4}D_{L_3} \label{eq22c2} \\
   D_LD_K^2 &= D_{L_6}D_{L_4}^2. \label{eq22c3}
  \end{align}
\end{subequations}

    \item Assume that $\Gal(L/K) \cong S_4$, let $L_i$ ($i=3, 4$) be one of the degree $i$ subextensions of $L/K$ and $L_6$ be the unique subextension of $L/K$ such that $L_3 \subset L_6$, $\Gal(L/L_6) \cong C_2^2$ and the Galois closure of $L_6/K$ is $L$. Then we have
    \begin{equation} \label{eq22d}
    D_{L_6}D_K = D_{L_4}D_{L_3}.
    \end{equation}
\end{enumerate}
\end{proposition}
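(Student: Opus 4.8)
The plan is to obtain all four families of identities from the conductor--discriminant formula together with the tower formula for discriminants. Let $L/K$ be Galois with $G:=\Gal(L/K)$, and for an intermediate field $K\subseteq M\subseteq L$ put $H:=\Gal(L/M)\leq G$. The conductor--discriminant formula gives $\mathfrak d_{M/K}=\prod_{\rho}\mathfrak f(\rho)^{\dim\rho^{H}}$, the product running over the irreducible complex representations $\rho$ of $G$, where $\mathfrak f(\rho)\subseteq\mathcal O_K$ is the Artin conductor and $\rho^{H}$ the space of $H$-fixed vectors; since $\dim\rho^{H}=\langle\chi_M,\rho\rangle$ for the permutation character $\chi_M:=\mathrm{Ind}_H^G\mathbf 1$ and $\mathfrak f(\mathbf 1)=(1)$, combining this with the tower formula $D_M=D_K^{[M:K]}\nm_{K/\Q}(\mathfrak d_{M/K})$ gives
$$ D_M \;=\; D_K^{\,\chi_M(1)}\prod_{\rho\neq\mathbf 1}A_\rho^{\langle\chi_M,\rho\rangle},\qquad A_\rho:=\nm_{K/\Q}\mathfrak f(\rho)\in\Z_{\geq 1}. $$
Hence $\psi\mapsto D_K^{\psi(1)}\prod_{\rho\neq\mathbf 1}A_\rho^{\langle\psi,\rho\rangle}$ is a homomorphism from the additive group of the representation ring of $G$ to $\Q_{>0}$ carrying $\chi_M\mapsto D_M$ and $\mathbf 1\mapsto D_K$, so each asserted identity is the image, under this homomorphism, of a corresponding identity among virtual permutation characters of $G$ --- a finite elementary check, which I carry out next.

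For $G\cong C_2^2$ with quadratic subextensions $K_1,K_2,K_3$ one has $\chi_{K_i}=\mathbf 1+\chi_i$, where $\chi_1,\chi_2,\chi_3$ are the nontrivial characters, and $\chi_L=\mathbf 1+\chi_1+\chi_2+\chi_3$; thus $\chi_{K_1}+\chi_{K_2}+\chi_{K_3}=\chi_L+2\cdot\mathbf 1$, which yields \eqref{eq22a}. For $G\cong S_3$, with $\epsilon$ the sign character and $\theta$ the $2$-dimensional irreducible, $\chi_{L_3}=\mathbf 1+\theta$, $\chi_{L_2}=\mathbf 1+\epsilon$ and $\chi_L=\mathbf 1+\epsilon+2\theta$, so $2\chi_{L_3}+\chi_{L_2}=\chi_L+2\cdot\mathbf 1$, giving \eqref{eq22b}.

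For $G\cong A_4$ let $\omega,\omega^2$ be the nontrivial characters inflated from $A_4/V_4\cong C_3$ and $\psi$ the $3$-dimensional irreducible; the only representation-theoretic input needed is that $\mathrm{Res}_{V_4}\psi$ contains no trivial constituent, whereas $\mathrm{Res}_{C_3}\psi$ and $\mathrm{Res}_{C_2}\psi$ each contain exactly one. This gives $\chi_{L_3}=\mathbf 1+\omega+\omega^2$, $\chi_{L_4}=\mathbf 1+\psi$, $\chi_{L_6}=\mathbf 1+\omega+\omega^2+\psi$ and $\chi_L=\mathbf 1+\omega+\omega^2+3\psi$, whence $\chi_{L_6}+\mathbf 1=\chi_{L_4}+\chi_{L_3}$ and $\chi_L+3\cdot\mathbf 1=3\chi_{L_4}+\chi_{L_3}$; these produce \eqref{eq22c2} and \eqref{eq22c1}, and \eqref{eq22c3} follows by eliminating $\chi_{L_3}$ between the two (equivalently, \eqref{eq22c3} is \eqref{eq22c1} divided by \eqref{eq22c2}). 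For $G\cong S_4$ let $\epsilon$ be the sign character, $\eta$ the $2$-dimensional irreducible inflated from $S_4/V_4\cong S_3$, and $\sigma,\sigma'=\sigma\otimes\epsilon$ the two $3$-dimensional irreducibles. Here $L_4$ is fixed by a point-stabilizer $\cong S_3$, $L_3$ by a Sylow $2$-subgroup $\cong D_4$ (whose core in $S_4$ is $V_4$, so that $L_3$ is indeed an $S_3$-cubic), and $L_6$ by the non-normal Klein four-subgroup contained in that $D_4$; a short character computation gives $\chi_{L_4}=\mathbf 1+\sigma$, $\chi_{L_3}=\mathbf 1+\eta$ and $\chi_{L_6}=\mathbf 1+\eta+\sigma$, so $\chi_{L_6}+\mathbf 1=\chi_{L_4}+\chi_{L_3}$, which gives \eqref{eq22d}.

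Everything above is routine once the conductor--discriminant dictionary is in place. The only place requiring any care is, in the $S_4$ case, identifying the conjugacy classes of the subgroups fixing $L_3$ and $L_6$ --- in particular checking that the hypotheses on $L_6$ force $\Gal(L/L_6)$ to be a non-normal Klein four-group lying inside a Sylow $2$-subgroup that fixes $L_3$, so that $\chi_{L_6}$ is the permutation character computed above --- together with the purely bookkeeping task of tracking the powers of $D_K$ contributed by the tower formula. I do not expect any substantive obstacle.
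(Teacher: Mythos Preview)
Your argument is correct and in fact more uniform than the paper's. Both approaches rest on the same permutation-character identities (e.g.\ $\chi_{L_6}+\mathbf 1=\chi_{L_4}+\chi_{L_3}$ in the $A_4$ and $S_4$ cases), but the paper extracts the discriminant relations in several different ways: the conductor--discriminant formula for~(1), a citation to the Frobenius-group discriminant formula of Fieker--Kl\"uners for~(2) and \eqref{eq22c1}, and---for \eqref{eq22c2} and \eqref{eq22d}---a detour through the zeta-function identity $\zeta_{L_6}\zeta_K=\zeta_{L_4}\zeta_{L_3}$, the functional equation, and a numerical evaluation at $s=\tfrac32$. Your observation that $\chi_M\mapsto D_M$, $\mathbf 1\mapsto D_K$ extends to a homomorphism from the virtual permutation characters to $\Q_{>0}^\times$ short-circuits all of this: once the character identity is checked (which you do correctly, including the identification of $\Gal(L/L_6)$ with a non-normal Klein four-group inside a chosen Sylow $2$-subgroup in the $S_4$ case), the discriminant identity is immediate. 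The paper's zeta-function route is essentially recovering the conductor--discriminant formula by analytic means; your approach is shorter and avoids the external references.
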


\begin{proof}
\begin{enumerate}
    \item By the conductor-discriminant formula \cite[VII.11.9]{Neu99}, we have $D_{L/K}=D_{K_1/K}D_{K_2/K}D_{K_3/K}$ where $D_{L/K} := \nm_{K/\Q}(\mathcal{D}_{L/K})$ is the absolute norm of the relative discriminant $\mathcal{D}_{L/K}$.
    
    \item $S_3=C_3 \rtimes C_2$ is a Frobenius group so it is a consequence of \cite[Theorem 4]{FK03}.
    
    \item $A_4=C_2^2 \rtimes C_3$ is a Frobenius group so the first formula is a consequence of \cite[Theorem 4]{FK03}. For a number field $M$, denote its Dedekind zeta function by $\zeta_M(s)$ and denote the number of real and complex embeddings of $M$ by $r_M$ and $s_M$, respectively. Then the character theory of $A_4$ implies the relation
    \begin{equation} \label{eq22e}
    \zeta_{L_6}(s)\zeta_{K}(s) = \zeta_{L_4}(s)\zeta_{L_3}(s)
    \end{equation}
    (cf. \cite[Theorem 5.1]{CT16}) and the functional equation of the Dedekind zeta function \cite[Corollary VII.5.10]{Neu99} implies that
    \begin{equation} \label{eq22f}
    D_M^{s-\frac{1}{2}} \cdot \frac{\zeta_{M}(s)}{\zeta_{M}(1-s)} 
\cdot \left ( \frac{ \Gamma(\frac{s}{2})}{\Gamma(\frac{1-s}{2})} \right )^{r_M}
\cdot \left ( \frac{2^{1-2s} \Gamma(s)}{\Gamma(1-s)} \right )^{s_M}
\cdot \pi^{(\frac{1-2s}{2} [M:\Q])}=1
    \end{equation}
    for any number field $M$.

    Now denote $c := s_K + s_{L_6} - s_{L_3} - s_{L_4} \in \Z$. The equations (\ref{eq22e}) and (\ref{eq22f}) imply that
    $$
    \left ( \frac{D_K D_{L_6}}{D_{L_4}D_{L_3}} \right )^{s-\frac{1}{2}} = \left ( \frac{2^{1-2s} \Gamma(s) \Gamma(\frac{1-s}{2})^2}{\Gamma(1-s) \Gamma(\frac{s}{2})^2} \right )^c.
    $$
    Substituting $s = \frac{3}{2}$ into the above equation, we obtain the equation
    $$
    \frac{D_K D_{L_6}}{D_{L_4}D_{L_3}} 
    = \left (  \frac{\frac{1}{4} \Gamma(\frac{3}{2}) \Gamma(-\frac{1}{4})^2}{\Gamma(-\frac{1}{2}) \Gamma(\frac{3}{4})^2}  \right )^c
    = (-1)^c
    $$
    and its left-hand side is positive. This proves the second formula, and the last formula comes from the first and second formulas.

    \item It can be proved as in the proof of the formula (\ref{eq22c2}), except that the character theory of $A_4$ is replaced by the character theory of $S_4$ (cf. \cite[Theorem 5.1]{CT16}). \qedhere
\end{enumerate}
\end{proof}

For a prime $p$ and a positive integer $m$, denote the exponent of $p$ in $m$ by $v_p(m)$. If $M$ is a number field which is tamely ramified at $p$, the inertia group $I_{M, p}$ is cyclic so we can choose its generator $g_{M, p}$. The following proposition gives a description of the discriminant of compositum of two number fields. See \cite[Section 2]{Wan21} for details.

\begin{proposition} \label{prop22b}
(\cite[Theorem 2.2 and 2.3]{Wan21}) Let $K_1$ and $K_2$ be number fields such that $K_1^c \cap K_2^c = \Q$ and $p$ be a prime such that both of $K_1$ and $K_2$ are tamely ramified at $p$. Suppose that $g_{K_1, p} = \prod_{k} c_k$ (product of disjoint cycles) and $g_{K_2, p}=\prod_{l} d_l$. Then
$$
v_p(D_{K_1K_2}) = m_1m_2 - \sum_{k, l} \gcd ( \left | c_{k} \right |,  \left | d_l \right | ),
$$
where $m_i$ is the degree of $K_i$ and $\left | c \right |$ denotes the length of the cycle $c$. If the least common multiple of $\left | c_{k} \right |$ and the least common multiple of $\left | d_l \right |$ are coprime, then we have
$$
v_p(D_{K_1K_2}) 
=v_p(D_{K_1}) \cdot m_2 + v_p(D_{K_2}) \cdot m_1 - v_p(D_{K_1})v_p(D_{K_2}).
$$
\end{proposition}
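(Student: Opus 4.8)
The plan is to express $v_p(D_{K_1K_2})$ through the cycle type of a generator of tame inertia at $p$ and then reduce the whole statement to elementary combinatorics of products of cyclic permutations.

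First I would record the local fact underlying everything. If $K$ is a number field tamely ramified at $p$, then the different exponent at each prime $\mathfrak{P}\mid p$ equals $e_{\mathfrak{P}}-1$, so
\[
v_p(D_K)=\sum_{\mathfrak{P}\mid p}f_{\mathfrak{P}}(e_{\mathfrak{P}}-1)=[K:\Q]-\sum_{\mathfrak{P}\mid p}f_{\mathfrak{P}}.
\]
On the other hand, letting the inertia group act on the set $\mathrm{Hom}_{\Q}(K,\overline{\Q_p})$ of the $[K:\Q]$ embeddings of $K$ into $\overline{\Q_p}$, for each $\mathfrak{P}\mid p$ the inertia group has $f_{\mathfrak{P}}$ orbits of size $e_{\mathfrak{P}}$ on the embeddings lying over $\mathfrak{P}$, and a generator $g_{K,p}$ of the (cyclic, by tameness) inertia group acts as a single cycle on each such orbit; thus $g_{K,p}$ has exactly $f_{\mathfrak{P}}$ cycles of length $e_{\mathfrak{P}}$ for each $\mathfrak{P}$. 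In particular any two generators have the same cycle type, so $g_{K,p}$ is well defined up to cycle type, and $\sum_{\mathfrak{P}\mid p}f_{\mathfrak{P}}$ is precisely the number of cycles of $g_{K,p}$. Hence
\[
v_p(D_K)=[K:\Q]-\#\{\text{cycles of }g_{K,p}\}.
\]

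Next I would pass to the compositum. Fix an element $\bar{g}$ of $\Gal(\overline{\Q_p}/\Q_p)$ whose image generates the tame inertia quotient. Since $K_1^c\cap K_2^c=\Q$, the fields $K_1$ and $K_2$ are linearly disjoint over $\Q$, so $[K_1K_2:\Q]=m_1m_2$ and $\mathrm{Hom}_{\Q}(K_1K_2,\overline{\Q_p})=\mathrm{Hom}_{\Q}(K_1,\overline{\Q_p})\times\mathrm{Hom}_{\Q}(K_2,\overline{\Q_p})$ as $\Gal(\overline{\Q_p}/\Q_p)$-sets, with $\bar{g}$ acting diagonally. Working over $\Q_p^{\mathrm{nr}}$, where a tamely totally ramified extension of degree $e$ (necessarily prime to $p$) has the shape $\Q_p^{\mathrm{nr}}(\pi^{1/e})$, one checks that $K_1K_2$ is again tamely ramified at $p$; consequently its inertia group is cyclic of order prime to $p$, the permutation action on $\mathrm{Hom}_{\Q}(K_1K_2,\overline{\Q_p})$ factors through the tame inertia quotient, and the image of $\bar{g}$ generates it. We may therefore take $g_{K_1K_2,p}=g_{K_1,p}\times g_{K_2,p}$, the product permutation on the product set. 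Using the elementary fact that the product of an $a$-cycle and a $b$-cycle splits into $\gcd(a,b)$ cycles of length $\lcm(a,b)$, the number of cycles of $g_{K_1,p}\times g_{K_2,p}$ equals $\sum_{k,l}\gcd(|c_k|,|d_l|)$; combined with the displayed identity this gives the first formula $v_p(D_{K_1K_2})=m_1m_2-\sum_{k,l}\gcd(|c_k|,|d_l|)$.

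For the second formula I would observe that $\lcm_k|c_k|$ and $\lcm_l|d_l|$ are the orders of $g_{K_1,p}$ and $g_{K_2,p}$, so when they are coprime we have $\gcd(|c_k|,|d_l|)=1$ for all $k,l$, and hence $\sum_{k,l}\gcd(|c_k|,|d_l|)$ is simply the product of the numbers of cycles of $g_{K_1,p}$ and $g_{K_2,p}$, which by the first step equals $(m_1-v_p(D_{K_1}))(m_2-v_p(D_{K_2}))$. Expanding $m_1m_2-(m_1-v_p(D_{K_1}))(m_2-v_p(D_{K_2}))$ then yields the stated expression. I expect the only genuinely delicate point to be the passage to the compositum — verifying that $K_1K_2$ remains tamely ramified at $p$ and that its inertia group is exactly the diagonal image of a single tame inertia generator, rather than some larger subgroup of $I_{K_1,p}\times I_{K_2,p}$; everything else is bookkeeping with cycle types.
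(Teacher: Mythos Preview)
The paper does not supply its own proof of this proposition; it simply imports the result from \cite[Theorems 2.2 and 2.3]{Wan21}. Your argument is correct and is essentially the standard one: reduce to the identity $v_p(D_K)=[K:\Q]-\#\{\text{cycles of }g_{K,p}\}$ for tamely ramified $K$, identify $g_{K_1K_2,p}$ with the diagonal permutation $g_{K_1,p}\times g_{K_2,p}$ via linear disjointness, and finish with the cycle count $\sum_{k,l}\gcd(|c_k|,|d_l|)$ for a product of two permutations. This is exactly the line of reasoning in Wang's paper, so there is nothing to contrast; the point you flag as delicate --- that $K_1K_2$ is again tame at $p$ and that a single tame inertia generator maps onto generators of both $I_{K_1,p}$ and $I_{K_2,p}$ --- is the only place requiring care, and your treatment over $\Q_p^{\mathrm{nr}}$ is adequate.
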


\subsection{Analytic preliminaries} \label{Sub23}

The following version of the Tauberian theorem is very useful for counting number fields. For instance, Wright \cite{Wri89} proved Malle's conjecture for abelian extensions by studying their associated Dirichlet series and applying the Tauberian theorem. 

\begin{proposition} \label{prop23a}
(\cite[Theorem III]{Del54}) Let $\displaystyle f(s)=\sum_{n=1}^{\infty} \frac{a_n}{n^s}$ ($a_n \geq 0$) be a Dirichlet series which converges for $\re(s) > a > 0$. Assume that
$$
f(s) = \frac{g(s)}{(s-a)^w} + h(s)
$$
in the domain of convergence, where $g(s)$ and $h(s)$ are holomorphic in $\re(s) \geq a$, $g(a) \neq 0$ and $w$ is a positive integer. Then we have
$$
\sum_{n \leq X} a_n \sim \frac{g(a)}{a \Gamma(w)} X^a (\log X)^{w-1},
$$
where $\Gamma(w)$ denotes the Gamma function.
\end{proposition}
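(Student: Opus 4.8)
The final statement is Delange's Tauberian theorem (a higher-order form of the Wiener--Ikehara theorem), so my plan would be to reconstruct the standard argument. The first step is to pass from the Dirichlet series to a Laplace transform on the half-line. Writing $A(X) := \sum_{n \le X} a_n$, the assumptions $a_n \ge 0$ and convergence of $f$ on $\re(s) > a$ force $A(X) \ll_{\varepsilon} X^{a+\varepsilon}$, so partial summation gives $f(s) = s \int_{1}^{\infty} A(t)\, t^{-s-1}\, dt$ for $\re(s) > a$; substituting $t = e^{x}$ and setting $U(x) := A(e^{x}) e^{-ax}$ turns this into $\int_{0}^{\infty} U(x)\, e^{-(s-a)x}\, dx = f(s)/s$. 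Writing $z = s-a$ and using that $a > 0$ makes $(a+z)^{-1}$ holomorphic on $\re(z) \ge 0$, the hypothesis $f(s)/s = \tfrac{g(s)}{s(s-a)^{w}} + \tfrac{h(s)}{s}$ shows that the Laplace transform of $U$ equals $G(z)\,z^{-w} + H(z)$ with $H$ holomorphic on $\re(z)\ge 0$ and $G(z) = g(a+z)/(a+z)$ holomorphic near $0$ with $G(0) = g(a)/a \ne 0$. Since $\int_{0}^{\infty} x^{w-1} e^{-zx}\, dx = \Gamma(w)\, z^{-w}$, the correct comparison function is $\tfrac{G(0)}{\Gamma(w)} x^{w-1}$, and the proposition becomes equivalent to the Laplace-transform Tauberian assertion $U(x) \sim \tfrac{G(0)}{\Gamma(w)} x^{w-1}$, which unwinds to exactly $A(X) \sim \tfrac{g(a)}{a\Gamma(w)} X^{a} (\log X)^{w-1}$.

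For the Tauberian core I would follow Ikehara's method. I would first subtract the model, $V(x) := U(x) - \tfrac{G(0)}{\Gamma(w)} x^{w-1}$, whose Laplace transform is holomorphic on $\re(z)\ge 0$ except for a pole of order at most $w-1$ at the origin, and induct on $w$, so that it suffices to treat the case in which the Laplace transform of the error continues holomorphically across the line $\re(z)=0$. In that case I would convolve $U$ with a Fej\'er-type kernel $K_{\lambda}$ whose Fourier transform is continuous and supported in $[-\lambda,\lambda]$; because $f(s)/s$ continues past $\re(s)=a$ apart from the single pole, the integral computing $(U * K_{\lambda})(x)$ separates into the explicit polar contribution plus a boundary integral over $\re(z)=0$ that tends to $0$ as $x\to\infty$ by the Riemann--Lebesgue lemma. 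The final, genuinely Tauberian step uses that $A(X) = U(\log X)\, X^{a}$ is nondecreasing to sandwich $U(x)$ between the averaged values $(U*K_{\lambda})(x\pm\delta)$ and thereby remove the smoothing; letting $\lambda\to\infty$ with $\delta$ coupled to $\lambda$ recovers the asymptotic with the sharp constant. The higher-order bookkeeping is benign: the subleading poles of $g(s)/(s(s-a)^{w})$ at $s=a$ feed only into lower powers of $\log X$ and so do not affect the leading term.

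The main obstacle is that nothing is assumed about $f(s)$ to the left of $\re(s)=a$---no analytic continuation and no growth bound---so one cannot simply shift the contour in Perron's formula past the pole and estimate. The Wiener--Ikehara mechanism is designed precisely to get around this, trading contour-shifting for boundary regularity on $\re(s)=a$ together with the one-sided monotonicity of $A(X)$; the delicate point will be the de-smoothing step, where the kernel width $\lambda$ and the averaging radius $\delta$ have to be balanced so that both the Riemann--Lebesgue error and the smoothing error vanish in the limit. Since the statement is quoted verbatim from \cite[Theorem~III]{Del54}, in the paper it is of course simply invoked rather than reproved.
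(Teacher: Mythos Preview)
Your proposal is a reasonable outline of the classical Delange--Ikehara argument, and you correctly anticipate the situation: the paper does not prove Proposition~\ref{prop23a} at all. It is stated with the citation \cite[Theorem~III]{Del54} and used as a black box (in Proposition~\ref{prop41c} and Proposition~\ref{prop42c}), with no accompanying proof or sketch. So there is nothing to compare your argument against; your final sentence already says exactly what happens in the paper.
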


For $\alpha > \beta > 0$, denote $f \in \mathcal{M} (\alpha, \beta)$ if $f(s)$ converges for $\re (s) > \alpha$ and has a meromorphic continuation to $\re (s) > \beta$ which is holomorphic except for a simple pole at $s = \alpha$. The next lemma will be used in the proof of Proposition \ref{prop42c}. 

\begin{lemma} \label{lem23b}
We have
$$
g(s) := \prod_{p \equiv \pm 1 \,\, (\text{mod } 7) } \left ( 1+\frac{3}{p^s} \right ) \in \mathcal{M} (1, \frac{1}{2}).
$$
\end{lemma}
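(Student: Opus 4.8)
\emph{Proof plan.} I would prove this by comparing $g(s)$ with the Dedekind zeta function $\zeta_F(s)$ of the real cyclic cubic field $F=\Q(\zeta_7+\zeta_7^{-1})$. A prime $p\neq7$ splits completely in $F$ if and only if its class in $(\Z/7\Z)^\times$ lies in the kernel $\{\pm1\}$ of $(\Z/7\Z)^\times\to(\Z/7\Z)^\times/\{\pm1\}\cong\Z/3\Z$, i.e.\ if and only if $p\equiv\pm1\pmod7$; each such $p$ then has exactly three degree-one primes of $F$ above it, every other $p\neq7$ is inert (one prime of norm $p^3$), and $7$ is totally ramified. Since $\zeta_F(s)=\zeta(s)L(s,\chi)L(s,\overline\chi)$ for a nontrivial cubic Dirichlet character $\chi$ modulo $7$, and $L(s,\chi),L(s,\overline\chi)$ are entire with $L(1,\chi)\neq0$, the function $\zeta_F$ is holomorphic on $\C\setminus\{1\}$ with a simple pole at $s=1$. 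Moreover $g(s)$ is given by an infinite product converging absolutely for $\re(s)>1$, so the convergence requirement in the definition of $\mathcal M(1,\tfrac12)$ is met.

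The next step is to take logarithms on $\re(s)>1$, where all the relevant Euler products converge absolutely. From $\log(1+3p^{-s})=3p^{-s}+O(p^{-2s})$ one gets
\[
\log g(s)=3\!\!\sum_{p\equiv\pm1\,(7)}\!\!\frac1{p^s}+R_2(s),\qquad R_2(s):=\sum_{p\equiv\pm1\,(7)}\Bigl(\log(1+\tfrac3{p^s})-\tfrac3{p^s}\Bigr).
\]
Here I would use the elementary observation that the smallest prime congruent to $\pm1$ modulo $7$ is $13>9=3^2$, so $|3p^{-s}|\le3p^{-1/2}<1$ for every such $p$ as soon as $\re(s)\ge\tfrac12$; hence $1+3p^{-s}$ is zero-free and $\log(1+3p^{-s})$ holomorphic there, with $\log(1+3p^{-s})-3p^{-s}=O(p^{-2\re(s)})$ uniformly on $\re(s)\ge\sigma_0>\tfrac12$, so that $R_2$ extends holomorphically to $\re(s)>\tfrac12$. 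Likewise, expanding $\log\zeta_F(s)=\sum_{\mathfrak p}\sum_{k\ge1}\tfrac1k N\mathfrak p^{-ks}$ and peeling off the $k=1$ contribution of the degree-one primes (three of norm $p$ over each split $p$) gives
\[
\log\zeta_F(s)=3\!\!\sum_{p\equiv\pm1\,(7)}\!\!\frac1{p^s}+R_1(s),
\]
where $R_1$ gathers $7^{-s}$, the $k\ge2$ terms over the degree-one primes, and all terms over the degree-three primes; each of these is a Dirichlet-type series converging absolutely and locally uniformly on $\re(s)>\tfrac12$ (indeed on $\re(s)>\tfrac13$), so $R_1$ is holomorphic there.

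Subtracting the two displays and exponentiating then yields, on $\re(s)>1$, the identity $g(s)=\zeta_F(s)\exp\bigl(R_2(s)-R_1(s)\bigr)$. The right-hand side is meromorphic on $\re(s)>\tfrac12$: the factor $\exp(R_2-R_1)$ is holomorphic and nowhere vanishing there, while $\zeta_F$ is holomorphic on $\re(s)>\tfrac12$ apart from a simple pole at $s=1$. Hence this identity furnishes the meromorphic continuation of $g$ to $\re(s)>\tfrac12$, holomorphic except for a simple pole at $s=1$, which is precisely the assertion $g\in\mathcal M(1,\tfrac12)$.

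The one step demanding care is this last one: verifying that the error factor $\exp(R_2-R_1)$ really does continue holomorphically and zero-free to $\re(s)>\tfrac12$. This comes down to the uniform bounds $\log(1+3p^{-s})-3p^{-s}=O(p^{-2s})$ and $-\log(1-N\mathfrak p^{-s})-N\mathfrak p^{-s}=O(N\mathfrak p^{-2s})$ on half-planes $\re(s)\ge\sigma_0>\tfrac12$, together with the arithmetic fact that no prime $\le9$ is congruent to $\pm1$ modulo $7$ (so $1+3p^{-s}$ has no zeros in the region and its principal-branch logarithm is available). Everything else is routine bookkeeping with Euler products and the standard analytic continuation of $\zeta$ and of Dirichlet $L$-functions.
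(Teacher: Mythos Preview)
Your proof is correct and takes a somewhat different route from the paper's. The paper works multiplicatively: it writes $g(s)=B_{7,1}(s)^3B_{7,-1}(s)^3h(s)$ for the partial Euler products $B_{q,a}(s)=\prod_{p\equiv a\,(q)}(1-p^{-s})^{-1}$ and an explicit correction $h$, then invokes a general formula (cited from a MathOverflow answer of Andersson) expressing $B_{q,a}(s)$ as a product of Dirichlet $L$-functions to fractional powers times a factor holomorphic and nonvanishing for $\re(s)>\tfrac12$. The key computation is that after cubing and multiplying the $a=1$ and $a=-1$ contributions, the exponents $(\overline{\chi(1)}+\overline{\chi(-1)})/2$ on the nontrivial $L(s,\chi)$ land in $\{0,1\}$, so no branch issues arise and one reads off a simple pole at $s=1$ from $L(s,\chi_0)$.

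Your approach replaces this machinery with the single Dedekind zeta function $\zeta_F$ of $F=\Q(\zeta_7)^+$, takes logarithms, and shows directly that $\log g-\log\zeta_F$ continues holomorphically past $\re(s)=1$ because every term beyond the leading $3\sum_{p\equiv\pm1}p^{-s}$ is $O(p^{-2s})$ or supported on degree-$3$ primes. The crucial arithmetic input---that $p\equiv\pm1\pmod7$ characterizes complete splitting in $F$---is exactly what makes the coefficient $3$ match on both sides. Your care with the smallest split prime $13>9$ ensuring $1+3p^{-s}$ is zero-free on $\re(s)\ge\tfrac12$ is the right technical point to flag. Your argument is more self-contained (no external citation for the $B_{q,a}$ factorization) and dovetails nicely with the appearance of this same field $F$ later in the paper; the paper's version, on the other hand, would adapt more mechanically to other arithmetic progressions.
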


\begin{proof}
For a positive integer $q>1$ and an integer $a$ prime to $q$, define the Dirichlet series
$$
B_{q, a}(s) := \prod_{p \equiv a \,\, ( \text{mod } q)} (1-\frac{1}{p^s})^{-1}.
$$
Let $\chi$ be a Dirichlet character modulo $q$ and $\chi_0$ be the principal character modulo $q$. By the answer of Johan Andersson to the MathOverflow question 28000 \cite{And11}, we have
$$
B_{q, a}(s) = L(s, \chi_0)^{\frac{1}{\varphi (q)}} \prod_{\chi \neq \chi_0} L(s, \chi)^{\frac{\overline{\chi(a)}}{\varphi (q)}} A_{q, a}(s)
$$
where $A_{q, a}(s)$ is holomorphic and non-vanishing for $\displaystyle \re (s) > \frac{1}{2}$. 

A direct computation shows that
$$
g(s) = B_{7, 1}(s)^3 B_{7, -1}(s)^3 h(s)
$$ 
for
$$
h(s) := \prod_{p \equiv \pm 1 \,\, (\text{mod } 7) } \left ( 1 - \frac{6}{p^{2s}} + \frac{8}{p^{3s}} - \frac{3}{p^{4s}} \right ), 
$$
which is holomorphic and non-vanishing in $\displaystyle \re (s) > \frac{1}{2}$. Therefore it is enough to show that
$$
B_{7, 1}(s)^3 B_{7, -1}(s)^3
= L(s, \chi_0)
\prod_{\chi \neq \chi_0} L(s, \chi)^{\frac{\overline{\chi(1)} + \overline{\chi(-1)}}{2}} A_{7, 1}(s)^3 A_{7, -1}(s)^3 \in \mathcal{M} (1, \frac{1}{2}).
$$
Since $A_{7, 1}(s)^3 A_{7, -1}(s)^3$ is holomorphic and non-vanishing for $\displaystyle \re (s) > \frac{1}{2}$ and $\displaystyle \frac{\overline{\chi(1)} + \overline{\chi(-1)}}{2} \in \left \{ 0, 1 \right \}$, the following well-known properties of Dirichlet $L$-functions finish the proof. 
\begin{itemize}
    \item If $\chi \neq \chi_0$, then $L(s, \chi)$ extends to a holomorphic function on $\re (s) >0$ and $L(1, \chi) \neq 0$. 
    
    \item If $\chi = \chi_0$, then $L(s, \chi_0)$ extends to a meromorphic function on $\re (s) >0$ which is holomorphic except for a simple pole at $s=1$. \qedhere
\end{itemize}
\end{proof}

The next lemma follows directly from Wirsing's theorem \cite[Satz 1]{Wir61}. For a positive integer $n$, denote by $\tau(n)$ the number of positive divisors of $n$.

\begin{lemma} \label{lem23c}
(\cite[Lemma 2.1]{BP06}) For any positive real number $t$, we have
$$
\sum_{n \leq X} \tau(n)^t \ll_t X (\log X)^{2^t-1}.
$$
\end{lemma}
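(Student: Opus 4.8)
The plan is to invoke Wirsing's theorem directly, as the lemma statement already attributes this to \cite[Satz 1]{Wir61} and \cite[Lemma 2.1]{BP06}. First I would recall that Wirsing's theorem gives an asymptotic (or at least an upper bound of the expected order of magnitude) for sums $\sum_{n \leq X} f(n)$ of non-negative multiplicative functions $f$ whose values on primes are controlled on average; specifically, if $f \geq 0$ is multiplicative with $\sum_{p \leq X} f(p) \log p \sim \kappa X$ and the contribution of prime powers is suitably bounded, then $\sum_{n \leq X} f(n) \asymp \frac{X}{\log X} \exp\left(\sum_{p \leq X} \frac{f(p)}{p}\right)$.

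The key step is to apply this with $f(n) = \tau(n)^t$, which is multiplicative and non-negative. On primes we have $\tau(p)^t = 2^t$, so $\sum_{p \leq X} \frac{\tau(p)^t}{p} = 2^t \sum_{p \leq X} \frac{1}{p} = 2^t \log\log X + O(1)$ by Mertens' theorem, hence $\exp\left(\sum_{p \leq X} \frac{f(p)}{p}\right) \ll_t (\log X)^{2^t}$. On prime powers, $\tau(p^k)^t = (k+1)^t$, and the series $\sum_{p} \sum_{k \geq 2} \frac{(k+1)^t}{p^k}$ converges (for each fixed $t$), so the prime-power contribution does not affect the order of magnitude. Feeding these into Wirsing's estimate yields $\sum_{n \leq X} \tau(n)^t \ll_t \frac{X}{\log X} \cdot (\log X)^{2^t} = X (\log X)^{2^t - 1}$, which is precisely the claimed bound.

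The main obstacle, such as it is, is purely bookkeeping: verifying that $f(n) = \tau(n)^t$ satisfies the precise hypotheses in Wirsing's formulation (the growth condition on $f$ at prime powers and the average condition on primes), and tracking that the implied constant depends only on $t$. Since this is a standard consequence recorded verbatim as \cite[Lemma 2.1]{BP06}, I would simply cite that reference rather than reproduce Wirsing's argument; alternatively, an entirely elementary proof proceeds by writing $\tau(n)^t \leq \tau_r(n)$ for a suitable integer $r = r(t)$ (using $\tau(n) \leq \tau_2(n)$ repeatedly, where $\tau_r$ is the $r$-fold divisor function) together with the classical bound $\sum_{n \leq X} \tau_r(n) \ll_r X (\log X)^{r-1}$ — but this gives a worse exponent unless one is careful, so the Wirsing route is cleaner for the exact constant $2^t - 1$.
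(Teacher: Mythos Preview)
Your proposal is correct and matches the paper's approach exactly: the paper does not give a proof either, instead stating that the lemma ``follows directly from Wirsing's theorem \cite[Satz 1]{Wir61}'' and noting that the argument in \cite[Lemma 2.1]{BP06}, though stated only for integer $t$, goes through verbatim for every positive real $t$. Your sketch of why Wirsing applies (with $f(p)=2^t$ and convergent prime-power tail) is in fact more detailed than what the paper provides.
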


Even though the lemma is given in \cite{BP06} only for the case where $t$ is an integer, its proof works for every positive real number $t$.

\section{Classification of $3$-dimensional tori over $\Q$} \label{Sec3}

In this section, we provide a classification of the isomorphism classes of $3$-dimensional tori over $\Q$, together with their Artin conductors. 
There are $73$ conjugacy classes of finite subgroups of $\GL_3(\Z)$, which are computed by Tahara \cite{Tah71} (and corrected by Ascher and Grimmer \cite{AG72}). 
For each finite subgroup $H \neq 1$ of $\GL_3(\Z)$, we compute tori $T$ over $\Q$ such that $G_T$ is conjugate to $H$ in $\GL_3(\Z)$. The computations of $T$ can be done as in \cite[Example 2.1]{Lee21} so we omit them here. 
The next proposition, which summarizes \cite[Section 2.2]{Lee21}, will be frequently used for the computation of the Artin conductor $C(T)$.

\begin{proposition} \label{prop3a}
\begin{enumerate}
    \item For a number field $K$, $C(\res_{K/\Q} \G_m) = D_K$. 
    
    \item If $1 \rightarrow T_1 \rightarrow T_2 \rightarrow T_3 \rightarrow 1$ is an exact sequence of tori over $\Q$, then $C(T_2)=C(T_1)C(T_3)$. 
    
    \item For an extension $L/K$ of number fields, 
    $$
    T_{L/K} := \ker(\res_{L/\Q} \G_m \xrightarrow{\nm_{L/K}} \res_{K/\Q} \G_m)
    $$
    ($\nm_{L/K}$ denotes the norm map) is a torus over $\Q$ of dimension $[L:\Q]-[K:\Q]$ and 
    \begin{equation} \label{eq3a}
    C(T_{L/K}) = \frac{D_L}{D_K}.
\end{equation}

    \item Suppose that $K_1$ and $K_2$ are linearly disjoint number fields and let $L=K_1K_2$. Then $T_{L/K_1} \cap T_{L/K_2}$ is a torus over $\Q$ of dimension $([K_1:\Q]-1)([K_2 : \Q]-1)$ and
    \begin{equation} \label{eq3b}
    C(T_{L/K_1} \cap T_{L/K_2}) = \frac{D_L}{D_{K_1}D_{K_2}}.
\end{equation}
\end{enumerate}
\end{proposition}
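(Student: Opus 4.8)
The plan is to pass to character lattices throughout, using the exact contravariant equivalence $T\mapsto X^*(T):=\operatorname{Hom}(T_{\overline{\Q}},\G_m)$ between $\Q$-tori and finitely generated torsion-free $\Z[G_{\Q}]$-modules, and the fact that $C(T)$ is by definition the Artin conductor $\mathfrak{f}(\rho_T)$ of the $\ell$-adic representation on $X^*(T)$ (equivalently on the cocharacter lattice, since a representation and its contragredient have equal conductors). I would take as black boxes two standard properties of Artin conductors: multiplicativity, i.e.\ $\mathfrak{f}$ factors through the Grothendieck group of Galois representations, so $\mathfrak{f}(\rho)=\mathfrak{f}(\rho')\mathfrak{f}(\rho'')$ for every short exact sequence $0\to\rho'\to\rho\to\rho''\to0$; and the conductor--discriminant formula in the shape $\mathfrak{f}\big(\operatorname{Ind}_{G_K}^{G_{\Q}}\mathbf{1}\big)=D_K$ for a number field $K$ (as in \cite[Chapter VII]{Neu99}). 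With these, part (1) is immediate because $X^*(\res_{K/\Q}\G_m)=\Z[G_{\Q}/G_K]=\operatorname{Ind}_{G_K}^{G_{\Q}}\mathbf{1}$, and part (2) is immediate because applying $X^*$ to $1\to T_1\to T_2\to T_3\to1$ yields a short exact sequence $0\to X^*(T_3)\to X^*(T_2)\to X^*(T_1)\to0$ of lattices, hence of $\ell$-adic representations.

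For part (3) I would first verify that $\nm_{L/K}\colon\res_{L/\Q}\G_m\to\res_{K/\Q}\G_m$ is faithfully flat with kernel $T_{L/K}$: on $\overline{\Q}$-points it is the coordinatewise field norm along the surjection $\operatorname{Hom}(L,\overline{\Q})\twoheadrightarrow\operatorname{Hom}(K,\overline{\Q})$, while on character lattices it is the fibrewise-sum injection $\Z[\operatorname{Hom}(K,\overline{\Q})]\hookrightarrow\Z[\operatorname{Hom}(L,\overline{\Q})]$, $[\iota]\mapsto\sum_{\iota'|_K=\iota}[\iota']$, whose cokernel is torsion-free (split off one element from each fibre), so $T_{L/K}$ is indeed a torus. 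Then $1\to T_{L/K}\to\res_{L/\Q}\G_m\to\res_{K/\Q}\G_m\to1$ is exact and parts (1) and (2) give $D_L=C(T_{L/K})D_K$.

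Part (4) is the one place where the linear-disjointness hypothesis is used, and it is the step I expect to need the most care. From $\nm_{K_2/\Q}\circ\nm_{L/K_2}=\nm_{L/\Q}=\nm_{K_1/\Q}\circ\nm_{L/K_1}$ one sees that $\nm_{L/K_2}$ carries $T_{L/K_1}$ into the norm-one torus $T^1_{K_2/\Q}:=\ker\big(\res_{K_2/\Q}\G_m\xrightarrow{\nm}\G_m\big)$, giving the exact sequence $1\to T_{L/K_1}\cap T_{L/K_2}\to T_{L/K_1}\xrightarrow{\nm_{L/K_2}}T^1_{K_2/\Q}$. Since $C\big(T^1_{K_2/\Q}\big)=D_{K_2}$ --- apply part (2) to $1\to T^1_{K_2/\Q}\to\res_{K_2/\Q}\G_m\to\G_m\to1$, with $C(\G_m)=1$ --- and $C(T_{L/K_1})=D_L/D_{K_1}$ by part (3), the desired identity $C(T_{L/K_1}\cap T_{L/K_2})=D_L/(D_{K_1}D_{K_2})$ will follow once I show that last map is surjective. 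Linear disjointness provides a $G_{\Q}$-equivariant bijection $\operatorname{Hom}(L,\overline{\Q})\cong\operatorname{Hom}(K_1,\overline{\Q})\times\operatorname{Hom}(K_2,\overline{\Q})$, hence $\Z[\operatorname{Hom}(L,\overline{\Q})]\cong\Z[\operatorname{Hom}(K_1,\overline{\Q})]\otimes_{\Z}\Z[\operatorname{Hom}(K_2,\overline{\Q})]$, under which (by part (3)) $X^*(T_{L/K_i})$ is the quotient by the \emph{slice} $M_1=\Z[\operatorname{Hom}(K_1,\overline{\Q})]\otimes\Z e_2$, $M_2=\Z e_1\otimes\Z[\operatorname{Hom}(K_2,\overline{\Q})]$, with $e_j$ the all-ones vector; the single computation that matters is $M_1\cap M_2=\Z(e_1\otimes e_2)$, and since the preimage of $e_1\otimes e_2$ in $\Z[\operatorname{Hom}(K_2,\overline{\Q})]$ is exactly $\Z e_2$, this forces $X^*(T^1_{K_2/\Q})\to X^*(T_{L/K_1})$ to be injective, i.e.\ $T_{L/K_1}\to T^1_{K_2/\Q}$ to be surjective. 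The same decomposition simultaneously shows $X^*(T_{L/K_1}\cap T_{L/K_2})\cong\Z^{([K_1:\Q]-1)([K_2:\Q]-1)}$ is torsion-free, so the intersection is genuinely a torus.

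None of the four parts is deep (they are collected in \cite[Section 2.2]{Lee21}); the only real obstacle is the character-lattice bookkeeping in part (4) --- pinning down $X^*(T_{L/K_i})$ and the image of $\nm_{L/K_2}|_{T_{L/K_1}}$ precisely enough to recognise that linear disjointness makes that image the \emph{full} norm-one torus of $K_2$, not a proper subtorus of it. Everything else reduces to the conductor--discriminant formula together with multiplicativity of Artin conductors in short exact sequences.
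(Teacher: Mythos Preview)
Your proof is correct. The paper itself does not supply a proof of this proposition; it simply records the statement as a summary of \cite[Section 2.2]{Lee21} (as you yourself note), so there is nothing in the present paper to compare your argument against. Your approach --- identify $X^*(\res_{K/\Q}\G_m)$ with $\operatorname{Ind}_{G_K}^{G_{\Q}}\mathbf 1$, invoke the conductor--discriminant formula for (1), use multiplicativity of Artin conductors for (2), and then deduce (3) and (4) from suitable short exact sequences --- is exactly the standard one, and your verification in (4) that linear disjointness forces $\nm_{L/K_2}\colon T_{L/K_1}\to T_{K_2/\Q}$ to be surjective via the tensor decomposition of the character lattice is clean and correct.
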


In each case, denote the splitting field of a torus $T$ by $L$ and identify $\Gal(L/\Q)$ with $G_T$. Let $C_m$ be the cyclic group of order $m$, $D_m$ be the dihedral group of order $2m$, $S_m$ be the symmetric group of degree $m$ and $A_m$ be the alternating group of degree $m$. For simplicity, denote $D_i := D_{L_i}$, $D_i' := D_{L_i'}$, and so on. We warn the readers not to confuse the discriminant $D_i$ and the dihedral group $D_m$. The following list gives the classification of $3$-dimensional tori over $\Q$ (except for the trivial one $\G_m^3$), together with their Artin conductors.

\begin{enumerate}[label=(\roman*)]

\item $G_T \cong C_2$ : $T$ is one of the following types. 

        \begin{enumerate}[label=(\alph*)]
        \item \label{tor2a} $G_T = H_{2,a} := \left \langle \bigl(\begin{smallmatrix}
1 & 0 & 0\\ 
0 & -1 & 0\\ 
0 & 0 & -1
\end{smallmatrix}\bigr) \right \rangle$ : $T = \G_m \times T_{L/\Q}^2$ and $C(T)=D_L^2$.
        
        \item \label{tor2b} $G_T = H_{2,b} := \left \langle \bigl(\begin{smallmatrix}
-1 & 0 & 0\\ 
0 & 1 & 0\\ 
0 & 0 & 1
\end{smallmatrix}\bigr) \right \rangle$ : $T = \G_m^2 \times T_{L/\Q}$ and $C(T)=D_L$.
        
        \item \label{tor2c} $G_T = H_{2,c} := \left \langle \bigl(\begin{smallmatrix}
-1 & 0 & 0\\ 
0 & 0 & 1\\ 
0 & 1 & 0
\end{smallmatrix}\bigr) \right \rangle$ : $T = \res_{L/\Q}\G_m \times T_{L/\Q}$ and $C(T)=D_L^2$.
        
        \item \label{tor2d} $G_T = H_{2,d} := \left \langle \bigl(\begin{smallmatrix}
1 & 0 & 0\\ 
0 & 0 & -1\\ 
0 & -1 & 0
\end{smallmatrix}\bigr) \right \rangle$ : $T = \G_m \times \res_{L/\Q}\G_m$ and $C(T)=D_L$.
        
        \item \label{tor2e} $G_T = H_{2,e} := \left \langle \bigl(\begin{smallmatrix}
-1 & 0 & 0\\ 
0 & -1 & 0\\ 
0 & 0 & -1
\end{smallmatrix}\bigr) \right \rangle$ : $T = T_{L/\Q}^3$ and $C(T)=D_L^3$.
    \end{enumerate}
    
\item $G_T \cong C_3$ : $T$ is one of the following types. 

    \begin{enumerate}[label=(\alph*)]
    \item \label{tor3a} $G_T = H_{3,a} := \left \langle \bigl(\begin{smallmatrix}
1 & 0 & 0\\ 
0 & 0 & -1\\ 
0 & 1 & -1
\end{smallmatrix}\bigr) \right \rangle$ : $T = \G_m \times T_{L/\Q}$ and $C(T)=D_L$.
        
    \item \label{tor3b} $G_T = H_{3,b} := \left \langle \bigl(\begin{smallmatrix}
0 & 1 & 0\\ 
0 & 0 & 1\\ 
1 & 0 & 0
\end{smallmatrix}\bigr) \right \rangle$ : $T = \res_{L/\Q}\G_m$ and $C(T)=D_L$.
    \end{enumerate}

\item $G_T=\left \langle g \right \rangle \cong C_4$ : $L_4 = L$ has the unique quadratic subfield $L_2 = L^{g^2}$. $T$ is one of the following types. 

\begin{enumerate}[label=(\alph*)]
    \item \label{tor4a} $G_T = H_{4,a} := \left \langle \bigl(\begin{smallmatrix}
1 & 0 & 0\\ 
0 & 0 & -1\\ 
0 & 1 & 0
\end{smallmatrix}\bigr) \right \rangle$ : $T = \G_m \times T_{L_4/L_2}$ and $\displaystyle C(T)=\frac{D_4}{D_2}$.
        
    \item \label{tor4b} $G_T = H_{4,b} := \left \langle \bigl(\begin{smallmatrix}
-1 & 0 & 0\\ 
0 & 0 & 1\\ 
0 & -1 & 0
\end{smallmatrix}\bigr) \right \rangle$ : $T = T_{L_2/\Q} \times T_{L_4/L_2}$ and $C(T)=D_4$.

    \item \label{tor4c} $G_T = H_{4,c} := \left \langle \bigl(\begin{smallmatrix}
1 & 0 & 1\\ 
0 & 0 & -1\\ 
0 & 1 & 0
\end{smallmatrix}\bigr) \right \rangle$ : $T = \left \{ v \in \res_{L_4/\Q} \G_m : v \cdot g^2v = gv \cdot g^3v \right \}$. Since
$$
1 \rightarrow T \rightarrow \res_{L_4/\Q} \G_m
\xrightarrow{v \mapsto \frac{v \cdot g^2v}{gv \cdot g^3v}} T_{L_2/\Q} \rightarrow 1
$$
is exact, we have $\displaystyle C(T)=\frac{D_4}{D_2}$.
        
    \item \label{tor4d} $G_T = H_{4,d} := \left \langle \bigl(\begin{smallmatrix}
-1 & 0 & -1\\ 
0 & 0 & 1\\ 
0 & -1 & 0
\end{smallmatrix}\bigr) \right \rangle$ : $T = T_{L_4/\Q}$ and $C(T)=D_4$.
    \end{enumerate}

\item $G_T=\left \langle g,h \right \rangle \cong C_2 \times C_2$ : $L$ has $3$ quadratic subfields $L_1=L^g$, $L_2=L^h$ and $L_3=L^{gh}$. Then $D_L=D_1D_2D_3$ by the equation (\ref{eq22a}). $T$ is one of the following types. 

\begin{enumerate}[label=(\alph*)]
\addtocounter{enumii}{4}

    \item \label{tor4e} $G_T = H_{4,e} := \left \langle \bigl(\begin{smallmatrix}
1 & 0 & 0\\ 
0 & -1 & 0\\ 
0 & 0 & -1
\end{smallmatrix}\bigr), \bigl(\begin{smallmatrix}
-1 & 0 & 0\\ 
0 & -1 & 0\\ 
0 & 0 & -1
\end{smallmatrix}\bigr) \right \rangle$ : $T = T_{L_1/\Q} \times T_{L_3/\Q}^2$ and $C(T)=D_1D_3^2$.
        
    \item \label{tor4f} $G_T = H_{4,f} := \left \langle \bigl(\begin{smallmatrix}
1 & 0 & 0\\ 
0 & -1 & 0\\ 
0 & 0 & -1
\end{smallmatrix}\bigr), \bigl(\begin{smallmatrix}
-1 & 0 & 0\\ 
0 & -1 & 0\\ 
0 & 0 & 1
\end{smallmatrix}\bigr) \right \rangle$ : $T = T_{L_1/\Q} \times T_{L_2/\Q} \times T_{L_3/\Q}$ and $C(T)=D_L$.

    \item \label{tor4g} $G_T = H_{4,g} := \left \langle \bigl(\begin{smallmatrix}
1 & 0 & 0\\ 
0 & -1 & 0\\ 
0 & 0 & -1
\end{smallmatrix}\bigr), \bigl(\begin{smallmatrix}
1 & 0 & 0\\ 
0 & 1 & 0\\ 
0 & 0 & -1
\end{smallmatrix}\bigr) \right \rangle$ : $T = \G_m \times T_{L_2/\Q} \times T_{L_3/\Q}$ and $C(T)=D_2D_3$.
        
    \item \label{tor4h} $G_T = H_{4,h} := \left \langle \bigl(\begin{smallmatrix}
1 & 0 & 0\\ 
0 & -1 & 0\\ 
0 & 0 & -1
\end{smallmatrix}\bigr), \bigl(\begin{smallmatrix}
-1 & 0 & 0\\ 
0 & 0 & -1\\ 
0 & -1 & 0
\end{smallmatrix}\bigr) \right \rangle$ : $T = T_{L_1/\Q} \times T_{L/L_1}$ and $C(T)=D_L$.

\item \label{tor4i} $G_T = H_{4,i} := \left \langle \bigl(\begin{smallmatrix}
1 & 0 & 0\\ 
0 & -1 & 0\\ 
0 & 0 & -1
\end{smallmatrix}\bigr), \bigl(\begin{smallmatrix}
1 & 0 & 0\\ 
0 & 0 & 1\\ 
0 & 1 & 0
\end{smallmatrix}\bigr) \right \rangle$ : $T = \G_m \times T_{L/L_1}$ and $\displaystyle C(T)=\frac{D_L}{D_1}=D_2D_3$.

\item \label{tor4j} $G_T = H_{4,j} := \left \langle \bigl(\begin{smallmatrix}
-1 & 0 & 0\\ 
0 & 1 & 0\\ 
0 & 0 & 1
\end{smallmatrix}\bigr), \bigl(\begin{smallmatrix}
1 & 0 & 0\\ 
0 & 0 & 1\\ 
0 & 1 & 0
\end{smallmatrix}\bigr) \right \rangle$ : $T = \res_{L_1/\Q}\G_m \times T_{L_2/\Q}$ and $C(T)=D_1D_2$.

\item \label{tor4k} $G_T = H_{4,k} := \left \langle \bigl(\begin{smallmatrix}
-1 & 0 & 0\\ 
0 & 0 & 1\\ 
0 & 1 & 0
\end{smallmatrix}\bigr), \bigl(\begin{smallmatrix}
-1 & 0 & 0\\ 
0 & -1 & 0\\ 
0 & 0 & -1
\end{smallmatrix}\bigr) \right \rangle$ : $T = T_{L_3/\Q} \times T_{L/L_2}$ and $C(T)=D_1D_3^2$.

\item \label{tor4l} $G_T = H_{4,l} := \left \langle \bigl(\begin{smallmatrix}
-1 & 0 & 0\\ 
0 & 0 & 1\\ 
0 & 1 & 0
\end{smallmatrix}\bigr), \bigl(\begin{smallmatrix}
-1 & 0 & 0\\ 
1 & 0 & -1\\ 
-1 & -1 & 0
\end{smallmatrix}\bigr) \right \rangle$ : 
$$
T=\left \{ (v_1, v_2) \in T_{L/L_1} \times T_{L/L_3} : v_1 \cdot hv_1 = v_2 \cdot hv_2 \right \}.
$$
Since
$$
1 \rightarrow T \rightarrow T_{L/L_1} \times T_{L/L_3} \xrightarrow{(v_1, v_2) \mapsto \frac{v_1 \cdot hv_1}{v_2 \cdot hv_2}} T_{L_2/\Q} \rightarrow 1
$$
is exact, we have $\displaystyle C(T)=\left ( \frac{D_L}{D_1} \cdot \frac{D_L}{D_3} \right ) \cdot D_2^{-1}=D_L$.

\item \label{tor4m} $G_T = H_{4,m} := \left \langle \bigl(\begin{smallmatrix}
-1 & 0 & 0\\ 
0 & 0 & 1\\ 
0 & 1 & 0
\end{smallmatrix}\bigr), \bigl(\begin{smallmatrix}
1 & 0 & 0\\ 
-1 & 0 & 1\\ 
1 & 1 & 0
\end{smallmatrix}\bigr) \right \rangle$ : $$
T=\left \{ (v_1, v_2) \in T_{L/L_1} \times \res_{L_3/\Q}\G_m : v_1 \cdot hv_2 = v_2 \cdot hv_1 \right \}.
$$
Since
$$
1 
\rightarrow T 
\rightarrow T_{L/L_1} \times \res_{L_3/\Q}\G_m 
\xrightarrow{(v_1, v_2) \mapsto \frac{v_1 \cdot hv_2}{v_2 \cdot hv_1}} T_{L_3/\Q} 
\rightarrow 1
$$
is exact, we have $\displaystyle C(T)=\left ( \frac{D_L}{D_1} \cdot D_3 \right ) \cdot D_3^{-1}=D_2D_3$.

\item \label{tor4n} $G_T = H_{4,n} := \left \langle \bigl(\begin{smallmatrix}
-1 & 0 & 0\\ 
0 & 0 & 1\\ 
0 & 1 & 0
\end{smallmatrix}\bigr), \bigl(\begin{smallmatrix}
-1 & 1 & -1\\ 
0 & 0 & -1\\ 
0 & -1 & 0
\end{smallmatrix}\bigr) \right \rangle$ : $T=T_{L/\Q}$ so $C(T)=D_L$. 

\item \label{tor4o} $G_T = H_{4,o} := \left \langle \bigl(\begin{smallmatrix}
-1 & 0 & 0\\ 
0 & 0 & 1\\ 
0 & 1 & 0
\end{smallmatrix}\bigr), \bigl(\begin{smallmatrix}
1 & -1 & 1\\ 
0 & 0 & 1\\ 
0 & 1 & 0
\end{smallmatrix}\bigr) \right \rangle$ : $T=\left \{ v \in \res_{L/\Q}\G_m : v \cdot gv = hv \cdot ghv \right \}$. Since
$$
1 \rightarrow T \rightarrow \res_{L/\Q}\G_m \xrightarrow{v \mapsto \frac{v \cdot gv}{hv \cdot ghv}} T_{L_1/\Q}
\rightarrow 1
$$
is exact, we have $C(T)=D_2D_3$. 

\end{enumerate}

\item $G_T=\left \langle g \right \rangle \cong C_6$ : $L_6 = L$ has the unique cubic subfield $L_3 = L^{g^3}$ and the unique quadratic subfield $L_2 = L^{g^2}$. $T$ is one of the following types. 

\begin{enumerate}[label=(\alph*)]
    \item \label{tor6a} $G_T = H_{6,a} := \left \langle \bigl(\begin{smallmatrix}
1 & 0 & 0\\ 
0 & 0 & -1\\ 
0 & 1 & 1
\end{smallmatrix}\bigr) \right \rangle$ : $T = \G_m \times (T_{L_6/L_2} \cap T_{L_6/L_3})$ and $\displaystyle C(T)=\frac{D_6}{D_2D_3}$.
        
    \item \label{tor6b} $G_T = H_{6,b} := \left \langle \bigl(\begin{smallmatrix}
-1 & 0 & 0\\ 
0 & 0 & 1\\ 
0 & -1 & -1
\end{smallmatrix}\bigr) \right \rangle$ : $T = T_{L_2/\Q} \times T_{L_3/\Q}$ and $\displaystyle C(T)=D_2D_3$.

    \item \label{tor6c} $G_T = H_{6,c} := \left \langle \bigl(\begin{smallmatrix}
-1 & 0 & 0\\ 
0 & 0 & 1\\ 
0 & -1 & 1
\end{smallmatrix}\bigr) \right \rangle$ : $T = T_{L_2/\Q} \times (T_{L_6/L_2} \cap T_{L_6/L_3})$ and $\displaystyle C(T)=\frac{D_6}{D_3}$.
        
    \item \label{tor6d} $G_T = H_{6,d} := \left \langle \bigl(\begin{smallmatrix}
0 & -1 & 0\\ 
0 & 0 & -1\\ 
-1 & 0 & 0
\end{smallmatrix}\bigr) \right \rangle$ : $T = T_{L_6/L_3}$ and $\displaystyle C(T)=\frac{D_6}{D_3}$.
    \end{enumerate}

\item $G_T=\left \langle g, h : g^3=h^2=(gh)^2=1 \right \rangle \cong S_3$ : $L_6 = L$ has $3$ isomorphic cubic subfields $L^h, L^{gh}$ and $L^{g^2h}$ (denoted by $L_3$) and the unique quadratic subfield $L_2 = L^g$. Then $D_6=D_3^2D_2$ by the equation (\ref{eq22b}). $T$ is one of the following types. 

\begin{enumerate}[label=(\alph*)]
\addtocounter{enumii}{4}

    \item \label{tor6e} $G_T = H_{6,e} := \left \langle \bigl(\begin{smallmatrix}
1 & 0 & 0\\ 
0 & 0 & -1\\ 
0 & 1 & -1
\end{smallmatrix}\bigr), \bigl(\begin{smallmatrix}
-1 & 0 & 0\\ 
0 & 0 & -1\\ 
0 & -1 & 0
\end{smallmatrix}\bigr) \right \rangle$ : $T = T_{L_2/\Q} \times (T_{L_6/L_2} \cap T_{L_6/L_3})$ and $C(T)=D_2D_3$.
        
    \item \label{tor6f} $G_T = H_{6,f} := \left \langle \bigl(\begin{smallmatrix}
1 & 0 & 0\\ 
0 & 0 & -1\\ 
0 & 1 & -1
\end{smallmatrix}\bigr), \bigl(\begin{smallmatrix}
1 & 0 & 0\\ 
0 & 0 & 1\\ 
0 & 1 & 0
\end{smallmatrix}\bigr) \right \rangle$ : $T = \G_m \times T_{L_3/\Q}$ and $C(T)=D_3$.

    \item \label{tor6g} $G_T = H_{6,g} := \left \langle \bigl(\begin{smallmatrix}
1 & 0 & 0\\ 
0 & 0 & -1\\ 
0 & 1 & -1
\end{smallmatrix}\bigr), \bigl(\begin{smallmatrix}
-1 & 0 & 0\\ 
0 & 0 & 1\\ 
0 & 1 & 0
\end{smallmatrix}\bigr) \right \rangle$ : $T =T_{L_2/\Q} \times T_{L_3/\Q}$ and $C(T)=D_2D_3$.
        
    \item \label{tor6h} $G_T = H_{6,h} := \left \langle \bigl(\begin{smallmatrix}
1 & 0 & 0\\ 
0 & 0 & -1\\ 
0 & 1 & -1
\end{smallmatrix}\bigr), \bigl(\begin{smallmatrix}
1 & 0 & 0\\ 
0 & 0 & -1\\ 
0 & -1 & 0
\end{smallmatrix}\bigr) \right \rangle$ : $T = \G_m \times (T_{L_6/L_2} \cap T_{L_6/L_3})$ and $C(T)=D_3$.

\item \label{tor6i} $G_T = H_{6,i} := \left \langle \bigl(\begin{smallmatrix}
0 & 1 & 0\\ 
0 & 0 & 1\\ 
1 & 0 & 0
\end{smallmatrix}\bigr), \bigl(\begin{smallmatrix}
0 & 0 & -1\\ 
0 & -1 & 0\\ 
-1 & 0 & 0
\end{smallmatrix}\bigr) \right \rangle$ : $T = T_{L_6/L_3}$ and $C(T)=D_2D_3$.

\item \label{tor6j} $G_T = H_{6,j} := \left \langle \bigl(\begin{smallmatrix}
0 & 1 & 0\\ 
0 & 0 & 1\\ 
1 & 0 & 0
\end{smallmatrix}\bigr), \bigl(\begin{smallmatrix}
0 & 0 & 1\\ 
0 & 1 & 0\\ 
1 & 0 & 0
\end{smallmatrix}\bigr) \right \rangle$ : $T = \res_{L_3/\Q}\G_m$ and $C(T)=D_3$.

\end{enumerate}

\item $G_T=\left \langle g,h : g^4=h^2=1, \, gh=hg \right \rangle \cong C_4 \times C_2$ :  
$$
L_4=L^h, \, L_4'=L^{g^2h}, \, L_4''= L^{g^2}
$$
are quartic subfields of $L$ and
$$
L_2=L^g, \, L_2'=L^{\left \langle g^2, h \right \rangle}, \, L_2''= L^{gh}
$$
are quadratic subfields of $L$. Since $\Gal(L/L_2') \cong C_2^2$ and $\Gal(L_4''/\Q) \cong C_2^2$, we have the following formulas by the equation (\ref{eq22a}):
\begin{subequations} 
\begin{align}
   D_LD_2'^2 &= D_4D_4'D_4'' \label{eq35a} \\
   D_4'' &= D_2D_2'D_2''. \label{eq35b}
  \end{align}
\end{subequations}
$T$ is one of the following types. 

\begin{enumerate}[label=(\alph*)]
    \item \label{tor8a} $G_T = H_{8,a} := \left \langle \bigl(\begin{smallmatrix}
1 & 0 & 0\\ 
0 & 0 & -1\\ 
0 & 1 & 0
\end{smallmatrix}\bigr), \bigl(\begin{smallmatrix}
-1 & 0 & 0\\ 
0 & -1 & 0\\ 
0 & 0 & -1
\end{smallmatrix}\bigr) \right \rangle$ : $T = T_{L_2/\Q} \times T_{L_4'/L_2'}$ and $\displaystyle C(T)=D_2 \frac{D_4'}{D_2'}$.
        
    \item \label{tor8b} $G_T = H_{8,b} := \left \langle \bigl(\begin{smallmatrix}
1 & 0 & 1\\ 
0 & 0 & -1\\ 
0 & 1 & 0
\end{smallmatrix}\bigr), \bigl(\begin{smallmatrix}
-1 & 0 & 0\\ 
0 & -1 & 0\\ 
0 & 0 & -1
\end{smallmatrix}\bigr) \right \rangle$ : $T = T_{L/L_4} \cap T_{L/L_2''}$ and $\displaystyle C(T)=\frac{D_L}{D_4D_2''}=D_2 \frac{D_4'}{D_2'}$ by the equations (\ref{eq35a}) and (\ref{eq35b}).
    \end{enumerate}

\item $G_T=\left \langle g_1, g_2, g_3 \right \rangle \cong C_2^3$ : For $\left \{ i,j,k \right \} = \left \{ 1,2,3 \right \}$, 
$$
M_i = L^{g_i}, \, M_{ij} = L^{g_ig_j}, \,  M=L^{g_1g_2g_3}
$$
are quartic subfields of $L$ and
$$
K_i = L^{\left \langle g_i, g_jg_k \right \rangle}, \, 
K_{ij}=L^{\left \langle g_i, g_j \right \rangle}, \,  
K=L^{\left \langle g_1g_2, g_1g_3 \right \rangle}
$$
are quadratic subfields of $L$. Since $M_{ij}/\Q$, $M/\Q$ and $L/K_3$ are Galois extensions with Galois groups $C_2^2$, we have the following formulas by the equation (\ref{eq22a}):
\begin{subequations} 
\begin{align}
 D_{M_{ij}} &= D_{K_k}D_{K_{ij}}D_K \label{eq35c} \\
 D_M &= D_{K_1}D_{K_2}D_{K_3} \label{eq35d} \\
 D_LD_{K_3}^2 &= D_{M_3}D_{M_{12}}D_M. \label{eq35e}
\end{align}
\end{subequations}
$T$ is one of the following types.

\begin{enumerate}[label=(\alph*)]
\addtocounter{enumii}{2}

    \item \label{tor8c} $G_T = H_{8,c} := \left \langle \bigl(\begin{smallmatrix}
1 & 0 & 0\\ 
0 & -1 & 0\\
0 & 0 & -1
\end{smallmatrix}\bigr), \bigl(\begin{smallmatrix}
-1 & 0 & 0\\ 
0 & -1 & 0\\ 
0 & 0 & 1
\end{smallmatrix}\bigr), \bigl(\begin{smallmatrix}
-1 & 0 & 0\\ 
0 & -1 & 0\\ 
0 & 0 & -1
\end{smallmatrix}\bigr) \right \rangle$ : $T = T_{K_1/\Q} \times T_{K/\Q} \times T_{K_2/\Q}$ and $\displaystyle C(T)=D_KD_{K_1}D_{K_2}$.
        
    \item \label{tor8d} $G_T = H_{8,d} := \left \langle \bigl(\begin{smallmatrix}
1 & 0 & 0\\ 
0 & -1 & 0\\ 
0 & 0 & -1
\end{smallmatrix}\bigr), \bigl(\begin{smallmatrix}
-1 & 0 & 0\\ 
0 & 0 & -1\\ 
0 & -1 & 0
\end{smallmatrix}\bigr), \bigl(\begin{smallmatrix}
-1 & 0 & 0\\ 
0 & -1 & 0\\ 
0 & 0 & -1
\end{smallmatrix}\bigr) \right \rangle$ : $T = T_{K_1/\Q} \times T_{M_{13}/K_{13}}$ and $\displaystyle C(T)=D_{K_1} \frac{D_{M_{13}}}{D_{K_{13}}} = D_KD_{K_1}D_{K_2}$.

    \item \label{tor8e} $G_T = H_{8,e} := \left \langle \bigl(\begin{smallmatrix}
-1 & 0 & 0\\ 
0 & 0 & 1\\ 
0 & 1 & 0
\end{smallmatrix}\bigr), \bigl(\begin{smallmatrix}
-1 & 0 & 0\\ 
1 & 0 & -1\\ 
-1 & -1 & 0
\end{smallmatrix}\bigr), \bigl(\begin{smallmatrix}
-1 & 0 & 0\\ 
0 & -1 & 0\\ 
0 & 0 & -1
\end{smallmatrix}\bigr) \right \rangle$ :
$$
T=\left \{ (v_1, v_2) \in T_{M_{13}/K_{13}} \times T_{M/K_3} : v_1 \cdot g_2v_1 = v_2 \cdot g_2v_2 \right \}.
$$
Since 
$$
1 \rightarrow T \rightarrow T_{M_{13}/K_{13}} \times T_{M/K_3} \xrightarrow{(v_1, v_2) \mapsto \frac{v_1 \cdot g_2v_1}{v_2 \cdot g_2v_2}} T_{K_2/\Q} \rightarrow 1
$$
is exact, we have $\displaystyle C(T)=\frac{D_{M_{13}}}{D_{K_{13}}} \cdot \frac{D_M}{D_{K_3}} \cdot D_{K_2}^{-1} = D_K D_{K_1}D_{K_2}$ by the equations (\ref{eq35c}) and (\ref{eq35d}).

    \item \label{tor8f} $G_T = H_{8,f} := \left \langle \bigl(\begin{smallmatrix}
-1 & 0 & 0\\ 
0 & 0 & 1\\ 
0 & 1 & 0
\end{smallmatrix}\bigr), \bigl(\begin{smallmatrix}
-1 & 1 & -1\\ 
0 & 0 & -1\\ 
0 & -1 & 0
\end{smallmatrix}\bigr), \bigl(\begin{smallmatrix}
-1 & 0 & 0\\ 
0 & -1 & 0\\ 
0 & 0 & -1
\end{smallmatrix}\bigr) \right \rangle$ : $T = T_{L/K_{12}} \cap T_{L/M_3}$ and $\displaystyle C(T)
=\frac{D_L}{D_{K_{12}}D_{M_3}}
=D_KD_{K_1}D_{K_2}$ by the equations (\ref{eq35c}), (\ref{eq35d}) and (\ref{eq35e}).

\end{enumerate}

\item \label{item3ix} $G_T=\left \langle g, h : g^4=h^2=(gh)^2=1 \right \rangle \cong D_4$ : 
$$
M=L^{g^2}, \, M_1=L^h, \, M_1'=L^{g^2h}, \, M_2=L^{gh}, \, M_2'=L^{g^3h}
$$
are quartic subfields of $L$ ($M_i'$ are Galois conjugates of $M_i$ so $D_{M_i}=D_{M_i'}$) and 
$$
K=L^g, \, K_1=L^{\left \langle g^2, h \right \rangle}, \, K_2=L^{\left \langle g^2, gh \right \rangle}
$$
are quadratic subfields of $L$. Then we have the following lattice of subfields of $L$. 
\[ 
\begin{tikzcd}
  &  & L \arrow[d, no head] \arrow[rd, no head] \arrow[rrd, no head] \arrow[ld, no head] &  &  \\
M_1 \arrow[rru, no head] \arrow[rd, no head] & M_1' \arrow[d, no head] & M \arrow[ld, no head] \arrow[d, no head] \arrow[rd, no head]  & M_2' \arrow[d, no head] & M_2 \arrow[ld, no head] \\
  & K_1 \arrow[rd, no head] & K \arrow[d, no head]   & K_2 \arrow[ld, no head] &  \\
  &  & \Q  &   &                        
\end{tikzcd}
\]
Since $L/K_1$, $L/K_2$ and $M/\Q$ are Galois extensions with Galois groups $C_2^2$, we have the following formulas by the equation (\ref{eq22a}):
\begin{subequations} 
\begin{align}
 D_LD_{K_1}^2 &= D_{M_1}^2D_M \label{eq35f} \\
 D_LD_{K_2}^2 &= D_{M_2}^2D_M \label{eq35g} \\
 D_M &= D_KD_{K_1}D_{K_2} \label{eq35h} \\
 \frac{D_{M_1}}{D_{K_1}} &= \frac{D_{M_2}}{D_{K_2}}. \label{eq35i}
\end{align}
\end{subequations}
The last equation is a consequence of the equations (\ref{eq35f}) and (\ref{eq35g}) (cf. \cite[Proposition 2.4]{ASVW21}.) $T$ is one of the following types. 

\begin{enumerate}[label=(\alph*)]
\addtocounter{enumii}{6}

    \item \label{tor8g} $G_T = H_{8,g} := \left \langle \bigl(\begin{smallmatrix}
1 & 0 & 0\\ 
0 & 0 & -1\\ 
0 & 1 & 0
\end{smallmatrix}\bigr), \bigl(\begin{smallmatrix}
-1 & 0 & 0\\ 
0 & 0 & 1\\ 
0 & 1 & 0
\end{smallmatrix}\bigr) \right \rangle$ : $T = T_{K/\Q} \times T_{M_2'/K_2}$ and $\displaystyle C(T)=\frac{D_KD_{M_2}}{D_{K_2}}$.
        
    \item \label{tor8h} $G_T = H_{8,h} := \left \langle \bigl(\begin{smallmatrix}
1 & 0 & 0\\ 
0 & 0 & -1\\ 
0 & 1 & 0
\end{smallmatrix}\bigr), \bigl(\begin{smallmatrix}
1 & 0 & 0\\ 
0 & 0 & -1\\ 
0 & -1 & 0
\end{smallmatrix}\bigr) \right \rangle$ : $T = \G_m \times T_{M_2/K_2}$ and $\displaystyle C(T)=\frac{D_{M_2}}{D_{K_2}}$.

    \item \label{tor8i} $G_T = H_{8,i} := \left \langle \bigl(\begin{smallmatrix}
-1 & 0 & 0\\ 
0 & 0 & 1\\ 
0 & -1 & 0
\end{smallmatrix}\bigr), \bigl(\begin{smallmatrix}
-1 & 0 & 0\\ 
0 & 0 & 1\\ 
0 & 1 & 0
\end{smallmatrix}\bigr) \right \rangle$ : $T = T_{K_2/\Q} \times T_{M_2/K_2}$ and $\displaystyle C(T)=D_{M_2}$.

    \item \label{tor8j} $G_T = H_{8,j} := \left \langle \bigl(\begin{smallmatrix}
-1 & 0 & 0\\ 
0 & 0 & 1\\ 
0 & -1 & 0
\end{smallmatrix}\bigr), \bigl(\begin{smallmatrix}
1 & 0 & 0\\ 
0 & 0 & -1\\ 
0 & -1 & 0
\end{smallmatrix}\bigr) \right \rangle$ : $T = T_{K_1/\Q} \times T_{M_2'/K_2}$ and $\displaystyle C(T)=\frac{D_{K_1}D_{M_2}}{D_{K_2}}=D_{M_1}$.

    \item \label{tor8k} $G_T = H_{8,k} := \left \langle \bigl(\begin{smallmatrix}
1 & 0 & 1\\ 
0 & 0 & -1\\ 
0 & 1 & 0
\end{smallmatrix}\bigr), \bigl(\begin{smallmatrix}
-1 & 0 & 0\\ 
0 & 0 & -1\\ 
0 & -1 & 0
\end{smallmatrix}\bigr) \right \rangle$ : $T = T_{L/K_1} \cap T_{L/M_2'}$ and $\displaystyle C(T)=\frac{D_L}{D_{K_1}D_{M_2}}=\frac{D_KD_{M_2}}{D_{K_2}}$ by the equations (\ref{eq35g}) and (\ref{eq35h}).

    \item \label{tor8l} $G_T = H_{8,l} := \left \langle \bigl(\begin{smallmatrix}
1 & 0 & 1\\ 
0 & 0 & -1\\ 
0 & 1 & 0
\end{smallmatrix}\bigr), \bigl(\begin{smallmatrix}
1 & 0 & 0\\ 
0 & 0 & 1\\ 
0 & 1 & 0
\end{smallmatrix}\bigr) \right \rangle$ : $
T = \left \{ v \in \res_{M_2'/\Q}\G_m : v \cdot g^2v = gv \cdot g^3v \right \}$. Since
$$
1 \rightarrow T \rightarrow \res_{M_2'/\Q}\G_m
\xrightarrow{v \mapsto \frac{v \cdot g^2v}{gv \cdot g^3v}} T_{K_2/\Q} \rightarrow 1
$$
is exact, we have $\displaystyle C(T)=\frac{D_{M_2}}{D_{K_2}}$.

    \item \label{tor8m} $G_T = H_{8,m} := \left \langle \bigl(\begin{smallmatrix}
-1 & 0 & -1\\ 
0 & 0 & 1\\ 
0 & -1 & 0
\end{smallmatrix}\bigr), \bigl(\begin{smallmatrix}
-1 & 0 & 0\\ 
0 & 0 & -1\\ 
0 & -1 & 0
\end{smallmatrix}\bigr) \right \rangle$ : $T = T_{M_2'/\Q}$ and $\displaystyle C(T)=D_{M_2}$.

    \item \label{tor8n} $G_T = H_{8,n} := \left \langle \bigl(\begin{smallmatrix}
-1 & 0 & -1\\ 
0 & 0 & 1\\ 
0 & -1 & 0
\end{smallmatrix}\bigr), \bigl(\begin{smallmatrix}
1 & 0 & 0\\ 
0 & 0 & 1\\ 
0 & 1 & 0
\end{smallmatrix}\bigr) \right \rangle$ : $T = T_{L/M_2'} \cap T_{L/K}$ and $\displaystyle C(T)=\frac{D_L}{D_{M_2}D_K}=D_{M_1}$ by the equations (\ref{eq35g}), (\ref{eq35h}) and (\ref{eq35i}).

\end{enumerate}

\item $G_T=\left \langle g, h : g^6=h^2=1, \, gh=hg \right \rangle \cong C_6 \times C_2$ : $L_6 = L^{g^3h}$ has the unique cubic subfield $L_3=L^{\left \langle g^3, h \right \rangle}$ and the unique quadratic subfield $L_2=L^{\left \langle g^2, gh \right \rangle}$. Also $L_2'=L^g$ is a quadratic subfield of $L$ such that $L=L_6L_2'$. $T$ is the following type.

\begin{enumerate}[label=(\alph*)]

\item \label{tor12a} $G_T=H_{12, a}:= \left \langle \bigl(\begin{smallmatrix}
1 & 0 & 0\\ 
0 & 0 & -1\\ 
0 & 1 & 1
\end{smallmatrix}\bigr), \bigl(\begin{smallmatrix}
-1 & 0 & 0\\ 
0 & -1 & 0\\ 
0 & 0 & -1
\end{smallmatrix}\bigr) \right \rangle$ : $T=T_{L_2'/\Q} \times (T_{L_6/L_3} \cap T_{L_6/L_2})$ and $\displaystyle C(T)= D_2' \frac{D_6}{D_3D_2}$.

\end{enumerate}

\item \label{item3xi} $G_T=\left \langle g, h : g^6=h^2=(gh)^2=1 \right \rangle \cong D_6$ : Consider the following lattice of subfields of $L$.
\[ 
\begin{tikzcd}[column sep=-0.7em]
& & & L \arrow[rd, no head] \arrow[dd, no head] & \\
L_1 = L^{gh} \arrow[rrru, no head] \arrow[dd, no head] \arrow[rd, no head] 
& & L_2 = L^{g^3} \arrow[ru, no head] \arrow[dd, no head] \arrow[ld, no head] 
& & L_3 = L^{g^4h} \arrow[dd, no head] \\
& L_4 = L^{\left \langle g^3, gh \right \rangle} \arrow[rrru, no head] \arrow[dd, no head] 
& & L_5 = L^{g^2} \arrow[rd, no head] & \\
L_6 = L^{\left \langle g^2, gh \right \rangle} \arrow[rd, no head] \arrow[rrru, no head] 
& & L_7 = L^{g} \arrow[ld, no head] \arrow[ru, no head] 
& & L_8 = L^{\left \langle g^2, h \right \rangle} \arrow[llld, no head] \\
& \Q & & & 
\end{tikzcd}
\]

Since $L_2/\Q$, $L/L_6$ and $L/L_8$ are Galois extensions with Galois groups $S_3$, we have the following formulas by the equation (\ref{eq22b}): 
\begin{subequations} 
\begin{align}
   D_2 &= D_4^2D_7 \label{eq36a} \\
   \frac{D_1^2}{D_6^2} & = \frac{D_L}{D_5} = \frac{D_3^2}{D_8^2}. \label{eq36b}
  \end{align}
\end{subequations}
$T$ is one of the following types. 

\begin{enumerate}[label=(\alph*)]
\addtocounter{enumii}{1}

\item \label{tor12b} $G_T=H_{12, b}:= \left \langle \bigl(\begin{smallmatrix}
1 & 0 & 0\\ 
0 & 0 & -1\\ 
0 & 1 & 1
\end{smallmatrix}\bigr), \bigl(\begin{smallmatrix}
-1 & 0 & 0\\ 
0 & 0 & 1\\ 
0 & 1 & 0
\end{smallmatrix}\bigr) \right \rangle$ : $T=T_{L_7/\Q} \times (T_{L_1/L_4} \cap T_{L_1/L_6})$ and $\displaystyle C(T)=\frac{D_1D_7}{D_4D_6}$.

\item \label{tor12c} $G_T=H_{12, c}:= \left \langle \bigl(\begin{smallmatrix}
1 & 0 & 0\\ 
0 & 0 & -1\\ 
0 & 1 & 1
\end{smallmatrix}\bigr), \bigl(\begin{smallmatrix}
1 & 0 & 0\\ 
0 & 0 & -1\\ 
0 & -1 & 0
\end{smallmatrix}\bigr) \right \rangle$ : $T=\G_m \times (T_{L_3/L_4} \cap T_{L_3/L_8})$ and $\displaystyle C(T)=\frac{D_3}{D_4D_8}=\frac{D_1}{D_4D_6}$.

\item \label{tor12d} $G_T=H_{12, d}:= \left \langle \bigl(\begin{smallmatrix}
-1 & 0 & 0\\ 
0 & 0 & 1\\ 
0 & -1 & -1
\end{smallmatrix}\bigr), \bigl(\begin{smallmatrix}
-1 & 0 & 0\\ 
0 & 0 & 1\\ 
0 & 1 & 0
\end{smallmatrix}\bigr) \right \rangle$ : $T=T_{L_6/\Q} \times (T_{L_2/L_4} \cap T_{L_2/L_7})$ and $\displaystyle C(T)=\frac{D_2D_6}{D_4D_7}=D_4D_6$.

\item \label{tor12e} $G_T=H_{12, e}:= \left \langle \bigl(\begin{smallmatrix}
-1 & 0 & 0\\ 
0 & 0 & 1\\ 
0 & -1 & -1
\end{smallmatrix}\bigr), \bigl(\begin{smallmatrix}
1 & 0 & 0\\ 
0 & 0 & -1\\ 
0 & -1 & 0
\end{smallmatrix}\bigr) \right \rangle$ : $T=T_{L_8/\Q} \times T_{L_4/\Q}$ and $\displaystyle C(T)=D_4D_8$.

\item \label{tor12f} $G_T=H_{12, f}:= \left \langle \bigl(\begin{smallmatrix}
-1 & 0 & 0\\ 
0 & 0 & 1\\ 
0 & -1 & 1
\end{smallmatrix}\bigr), \bigl(\begin{smallmatrix}
-1 & 0 & 0\\ 
0 & 0 & -1\\ 
0 & -1 & 0
\end{smallmatrix}\bigr) \right \rangle$ : $T=T_{L_6/\Q} \times (T_{L_1/L_4} \cap T_{L_1/L_6})$ and $\displaystyle C(T)=\frac{D_1}{D_4}$.

\item \label{tor12g} $G_T=H_{12, g}:= \left \langle \bigl(\begin{smallmatrix}
-1 & 0 & 0\\ 
0 & 0 & 1\\ 
0 & -1 & 1
\end{smallmatrix}\bigr), \bigl(\begin{smallmatrix}
-1 & 0 & 0\\ 
0 & 0 & 1\\ 
0 & 1 & 0
\end{smallmatrix}\bigr) \right \rangle$ : $T=T_{L_6/\Q} \times (T_{L_3/L_4} \cap T_{L_3/L_8})$ and $\displaystyle C(T)=\frac{D_3D_6}{D_4D_8}=\frac{D_1}{D_4}$.

\item \label{tor12h} $G_T=H_{12, h}:= \left \langle \bigl(\begin{smallmatrix}
0 & -1 & 0\\ 
0 & 0 & -1\\ 
-1 & 0 & 0
\end{smallmatrix}\bigr), \bigl(\begin{smallmatrix}
0 & 0 & -1\\ 
0 & -1 & 0\\ 
-1 & 0 & 0
\end{smallmatrix}\bigr) \right \rangle$ : $T=T_{L_1/L_4}$ and $\displaystyle C(T)=\frac{D_1}{D_4}$.
\end{enumerate}

\item $G_T=\left \langle g, h : g^3=h^2=(gh)^3=1 \right \rangle \cong A_4$ : Let
$$
L_6=L^h, \, L_6'=L^{g^{-1}hg}, \, L_4=L^{ghg}, \, L_4'=L^{gh} \text{ and } L_3=L^{\left \langle h, g^{-1}hg \right \rangle}.
$$
We have $D_6=D_6'$, $D_4=D_4'$ and the following formulas by the equations (\ref{eq22c2}) and (\ref{eq22c3}):
\begin{subequations} 
\begin{align}
   D_6 &= D_3D_4 \label{eq36e} \\
   D_L &= D_4^2D_6. \label{eq36f}
  \end{align}
\end{subequations}
$T$ is one of the following types. 

\begin{enumerate}[label=(\alph*)]
\addtocounter{enumii}{8}

\item \label{tor12i} $G_T=H_{12, i}:= \left \langle \bigl(\begin{smallmatrix}
0 & 1 & 0\\ 
0 & 0 & 1\\ 
1 & 0 & 0
\end{smallmatrix}\bigr), \bigl(\begin{smallmatrix}
-1 & 0 & 0\\ 
0 & 1 & 0\\ 
0 & 0 & -1
\end{smallmatrix}\bigr) \right \rangle$ : $T=T_{L_6/L_3}$ and $\displaystyle C(T)=\frac{D_6}{D_3}=D_4$.

\item \label{tor12j} $G_T=H_{12, j}:= \left \langle \bigl(\begin{smallmatrix}
0 & 1 & 0\\ 
0 & 0 & 1\\ 
1 & 0 & 0
\end{smallmatrix}\bigr), \bigl(\begin{smallmatrix}
0 & -1 & 1\\ 
0 & -1 & 0\\ 
1 & -1 & 0
\end{smallmatrix}\bigr) \right \rangle$ : $T=T_{L_4/\Q}$ and $\displaystyle C(T)=D_4$. 

\item \label{tor12k} $G_T=H_{12, k}:= \left \langle \bigl(\begin{smallmatrix}
0 & 1 & 0\\ 
0 & 0 & 1\\ 
1 & 0 & 0
\end{smallmatrix}\bigr), \bigl(\begin{smallmatrix}
-1 & -1 & -1\\ 
0 & 0 & 1\\ 
0 & 1 & 0
\end{smallmatrix}\bigr) \right \rangle$ : $T = \left \{ v \in T_{L/L_6'} : hv \cdot gv = g^2v \right \}$. Since
$$
1 \rightarrow T \rightarrow T_{L/L_6'}
\xrightarrow{v \mapsto \frac{hv \cdot gv}{g^2v}} T_{L_4'/\Q} \rightarrow 1
$$
is exact, we have $\displaystyle C(T)=\frac{D_L}{D_6D_4}=D_4$.

\end{enumerate}

\item $G_T=\left \langle g, h : g^4=h^2=(gh)^2=1 \right \rangle \times \left \langle i \right \rangle \cong D_4 \times C_2$ : $L_8=L^{hgi}$ is an octic subfield of $L$, 
$$
L_4=L^{\left \langle hg, i \right \rangle}, \, 
L_4'=L^{\left \langle hg, g^2i \right \rangle}, \, 
L_4''=L^{\left \langle gh, g^2i \right \rangle}, \,
L_4'''=L^{\left \langle hgi, g^2 \right \rangle}
$$
are quartic subfields of $L$ and
$$
L_2=L^{\left \langle h, gi \right \rangle}, \,
L_2'=L^{\left \langle hg, g^2, i \right \rangle}, \,
L_2''=L^{\left \langle g, hi \right \rangle}
$$
are quadratic subfields of $L$. $L_4''$ is a Galois conjugate of $L_4'$ so $D_4''=D_4'$. Since $L_8/L_2'$ and $L_4'''/\Q$ are Galois extensions with Galois groups $C_2^2$, we have the following formulas by the equation (\ref{eq22a}):
\begin{subequations} 
\begin{align}
   D_8D_2'^2 &= D_4D_4'D_4''' \label{eq37a} \\
   D_4''' &= D_2D_2'D_2''. \label{eq37b}
  \end{align}
\end{subequations}
$T$ is one of the following types. 

\begin{enumerate}[label=(\alph*)]

    \item \label{tor16a} $G_T = H_{16,a} := \left \langle \bigl(\begin{smallmatrix}
1 & 0 & 0\\ 
0 & 0 & -1\\
0 & 1 & 0
\end{smallmatrix}\bigr), \bigl(\begin{smallmatrix}
-1 & 0 & 0\\ 
0 & 0 & 1\\
0 & 1 & 0
\end{smallmatrix}\bigr), \bigl(\begin{smallmatrix}
-1 & 0 & 0\\ 
0 & -1 & 0\\ 
0 & 0 & -1
\end{smallmatrix}\bigr) \right \rangle$ : $T=T_{L_2''/\Q} \times T_{L_4'/L_2'}$ and $\displaystyle C(T)=D_2''\frac{D_4'}{D_2'}$. 
        
    \item \label{tor16b} $G_T = H_{16, b} := \left \langle \bigl(\begin{smallmatrix}
1 & 0 & 1\\ 
0 & 0 & -1\\ 
0 & 1 & 0
\end{smallmatrix}\bigr), \bigl(\begin{smallmatrix}
-1 & 0 & 0\\ 
0 & 0 & -1\\ 
0 & -1 & 0
\end{smallmatrix}\bigr), \bigl(\begin{smallmatrix}
-1 & 0 & 0\\ 
0 & -1 & 0\\ 
0 & 0 & -1
\end{smallmatrix}\bigr) \right \rangle$ : $T=T_{L_8/L_4} \cap T_{L_8/L_2}$ and $\displaystyle C(T)=\frac{D_8}{D_4D_2} = D_2''\frac{D_4'}{D_2'}$ by the equations (\ref{eq37a}) and (\ref{eq37b}).

\end{enumerate}

\item $G_T=\left \langle g, h : g^3=h^2=(gh)^3=1 \right \rangle \times \left \langle i \right \rangle \cong A_4 \times C_2$ : Let $L_{12} = L^{hi}$, $L_8=L^{hg}$, $L_6=L^{\left \langle hi, ghg^{-1} \right \rangle}$, $L_6'=L^{\left \langle h, i \right \rangle}$, $L_4=L^{\left \langle hg, i \right \rangle}$, $L_3=L^{\left \langle h, ghg^{-1}, i \right \rangle}$ and $L_2=L^{\left \langle g,h \right \rangle}$. $T$ is one of the following types. 

\begin{enumerate}[label=(\alph*)]

    \item \label{tor24a} $G_T = H_{24,a} := \left \langle \bigl(\begin{smallmatrix}
0 & 1 & 0\\ 
0 & 0 & 1\\
1 & 0 & 0
\end{smallmatrix}\bigr), \bigl(\begin{smallmatrix}
-1 & 0 & 0\\ 
0 & 1 & 0\\
0 & 0 & -1
\end{smallmatrix}\bigr), \bigl(\begin{smallmatrix}
-1 & 0 & 0\\ 
0 & -1 & 0\\ 
0 & 0 & -1
\end{smallmatrix}\bigr) \right \rangle$ : $T = T_{L_6/L_3}$ and $\displaystyle C(T) = \frac{D_6}{D_3}$.
        
    \item \label{tor24b} $G_T = H_{24,b} := \left \langle \bigl(\begin{smallmatrix}
0 & 1 & 0\\ 
0 & 0 & 1\\
1 & 0 & 0
\end{smallmatrix}\bigr), \bigl(\begin{smallmatrix}
0 & -1 & 1\\ 
0 & -1 & 0\\
1 & -1 & 0
\end{smallmatrix}\bigr), \bigl(\begin{smallmatrix}
-1 & 0 & 0\\ 
0 & -1 & 0\\ 
0 & 0 & -1
\end{smallmatrix}\bigr) \right \rangle$ : $T = T_{L_8/L_4} \cap T_{L_8/L_2}$ and $\displaystyle C(T) = \frac{D_8}{D_4D_2}$.

    \item \label{tor24c} $G_T = H_{24,c} := \left \langle \bigl(\begin{smallmatrix}
0 & 1 & 0\\ 
0 & 0 & 1\\
1 & 0 & 0
\end{smallmatrix}\bigr), \bigl(\begin{smallmatrix}
-1 & -1 & -1\\ 
0 & 0 & 1\\
0 & 1 & 0
\end{smallmatrix}\bigr), \bigl(\begin{smallmatrix}
-1 & 0 & 0\\ 
0 & -1 & 0\\ 
0 & 0 & -1
\end{smallmatrix}\bigr) \right \rangle$ : $T = \left \{ v \in T_{L_{12}/L_6'} : v \cdot hgv = g^2v \right \}$. Since
$$
1 \rightarrow T \rightarrow T_{L_{12}/L_6'}
\xrightarrow{v \mapsto \frac{v \cdot hgv}{g^2v}} T_{L_8/L_4} \cap T_{L_8/L_2} \rightarrow 1
$$
is exact, we have $\displaystyle C(T) = \frac{D_{12}D_4D_2}{D_8D_6'}$.

\end{enumerate}

One can prove that the number $C(T)$ for each torus of \ref{tor24a}, \ref{tor24b} and \ref{tor24c} are equal. 

\begin{lemma} \label{lem3b}
$\displaystyle \frac{D_6}{D_3} = \frac{D_8}{D_4D_2} = \frac{D_{12}D_4D_2}{D_8D_6'}$.
\end{lemma}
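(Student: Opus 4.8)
The plan is to reduce all three expressions to a single Artin conductor by means of the conductor--discriminant formula. Write $G := \Gal(L/\Q) \cong A_4 \times \langle i\rangle$. Its irreducible complex characters are the trivial one, the two characters $\omega, \overline{\omega}$ inflated from $A_4/\langle h, ghg^{-1}\rangle \cong C_3$, the $3$-dimensional character $\psi$ inflated from $A_4$, and the four twists $\epsilon, \omega\epsilon, \overline{\omega}\epsilon, \psi\epsilon$ of these by the nontrivial character $\epsilon$ of the factor $\langle i\rangle$. For any subfield $F \subseteq L$ with $H := \Gal(L/F)$, the conductor--discriminant formula \cite[VII.11.9]{Neu99} gives $D_F = \prod_{\rho}\mathfrak{f}(\rho)^{\dim\rho^{H}}$, the product over the eight characters $\rho$ just listed, where $\mathfrak{f}(\rho)$ is the Artin conductor and $\dim\rho^{H} = \tfrac{1}{|H|}\sum_{x\in H}\rho(x)$.

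The main step is to identify $H = \Gal(L/F)$ for each of the seven fields $L_3, L_6, L_6', L_4, L_2, L_8, L_{12}$ straight from the definitions listed at the start of this item and to evaluate the resulting character sums. This should give
\[
D_3 = \mathfrak{f}(\omega)\mathfrak{f}(\overline{\omega}),\qquad
D_6 = \mathfrak{f}(\omega)\mathfrak{f}(\overline{\omega})\mathfrak{f}(\psi\epsilon),\qquad
D_6' = \mathfrak{f}(\omega)\mathfrak{f}(\overline{\omega})\mathfrak{f}(\psi),
\]
\[
D_2 = \mathfrak{f}(\epsilon),\qquad D_4 = \mathfrak{f}(\psi),\qquad
D_8 = \mathfrak{f}(\psi)\mathfrak{f}(\epsilon)\mathfrak{f}(\psi\epsilon),\qquad
D_{12} = \mathfrak{f}(\omega)\mathfrak{f}(\overline{\omega})\mathfrak{f}(\psi)\mathfrak{f}(\psi\epsilon)^{2}.
\]
The one delicate comparison is between $L_6 = L^{\langle hi,\, ghg^{-1}\rangle}$ and $L_6' = L^{\langle h,\, i\rangle}$: both fixed subgroups are Klein four-groups inside $\langle h, ghg^{-1}\rangle \times \langle i\rangle$, but the first contains the ``diagonal'' involution $hi$, so that $\psi\epsilon$ has a nonzero fixed space while $\psi$ does not, whereas for the second it is the reverse. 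As a consistency check, $D_6' = D_4 D_3$ is precisely formula (\ref{eq22c2}) applied to the $A_4$-extension $L^{\langle i\rangle}/\Q$, whose cubic, quartic and sextic subfields are $L_3, L_4, L_6'$.

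Substituting, one gets $\tfrac{D_6}{D_3} = \mathfrak{f}(\psi\epsilon)$ and $\tfrac{D_8}{D_4 D_2} = \mathfrak{f}(\psi\epsilon)$ at once, and
\[
\frac{D_{12}D_4 D_2}{D_8 D_6'}
= \frac{\mathfrak{f}(\omega)\mathfrak{f}(\overline{\omega})\mathfrak{f}(\psi)\mathfrak{f}(\psi\epsilon)^{2}\cdot\mathfrak{f}(\psi)\cdot\mathfrak{f}(\epsilon)}
{\mathfrak{f}(\psi)\mathfrak{f}(\epsilon)\mathfrak{f}(\psi\epsilon)\cdot\mathfrak{f}(\omega)\mathfrak{f}(\overline{\omega})\mathfrak{f}(\psi)}
= \mathfrak{f}(\psi\epsilon),
\]
so all three quantities coincide. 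Conceptually, this reflects the fact that in each of \ref{tor24a}, \ref{tor24b}, \ref{tor24c} the central matrix $-I_3 \in \GL_3(\Z)$ is the generator of the $C_2$-factor of $G_T$, so the complexification of the Galois representation on the cocharacter lattice of $T$ is the unique faithful $3$-dimensional complex representation $\psi\epsilon$ of $A_4 \times C_2$ in all three cases; since $C(T)$ depends only on that complex representation and equals $\tfrac{D_6}{D_3}$, $\tfrac{D_8}{D_4 D_2}$, $\tfrac{D_{12}D_4 D_2}{D_8 D_6'}$ in the three cases, the three are equal.

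The only real work is the bookkeeping in the second step — matching each subgroup to its fixed-space dimensions, remembering that $L_3$ is Galois over $\Q$ while $L_6, L_8, L_{12}$ are not, and keeping $\psi$ and $\psi\epsilon$ apart — and I anticipate no analytic or structural obstacle.
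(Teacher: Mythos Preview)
Your proof is correct and takes a genuinely different route from the paper's. The paper introduces four auxiliary subfields $L_{12}', L_{12}'', L_6'', L_6'''$ and applies Proposition~\ref{prop22a} (equations (\ref{eq22a}) and (\ref{eq22c3})) to four field diagrams, obtaining relations (\ref{eq38a})--(\ref{eq38d}) which it then combines algebraically to get $c_2^2 = c_2c_3 = c_1^2$. You instead go straight to the conductor--discriminant formula and express every $D_F$ as a monomial in the Artin conductors $\mathfrak{f}(\rho)$ of the eight irreducible characters of $A_4\times C_2$; the three ratios then visibly all equal $\mathfrak{f}(\psi\epsilon)$. I checked your seven discriminant formulas against the character table and the relevant fixed-space dimensions, and they are all correct --- in particular the distinction you draw between $L_6$ and $L_6'$ (that $\langle hi, ghg^{-1}\rangle$ admits a $\psi\epsilon$-fixed line but no $\psi$-fixed line, while $\langle h,i\rangle$ does the reverse) is exactly right. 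Your approach is more direct and has the conceptual payoff of identifying the common value as the Artin conductor of the unique faithful $3$-dimensional representation of $A_4\times C_2$, which is precisely $C(T)$ for each of the three tori; the paper's approach has the advantage of staying entirely within the discriminant-relation toolbox already set up in Proposition~\ref{prop22a}, avoiding any explicit character computations.
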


\begin{proof}
We need to consider some more subfields of $L$ for the proof. Let
$$
L_{12}'=L^h, \, L_{12}''=L^i, \, L_6''=L^{\left \langle h, ghg^{-1} \right \rangle} \text{ and } L_6'''=L^{\left \langle hi, g^{-1}hg \right \rangle}.
$$
Since $\left \langle \tilde{h}i, g^{-1} \tilde{h} g \right \rangle = \left \langle hi, ghg^{-1} \right \rangle$ for $\tilde{h} := g^{-1}hg$, $L_6'''$ is a Galois conjugate of $L_6$ so $D_6'''=D_6$. Consider the following lattices of subfields. 
\[ 
\begin{tikzcd}[column sep=0.8em]
&  & L \arrow[ld, "2"', no head] \arrow[rdd, "3", no head] \arrow[ddd, "A_4", no head] & \\
& L_{12}' \arrow[ld, "2"', no head] \arrow[rdd, no head] &  & \\
L_6'' \arrow[rrd, no head] &   &   & L_8 \arrow[ld, no head] \\
&  & L_2  &   
\end{tikzcd}
\quad
\begin{tikzcd}[column sep=0.8em]
&  & L_{12}'' \arrow[ld, "2"', no head] \arrow[rdd, "3", no head] \arrow[ddd, "A_4", no head] & \\
& L_6' \arrow[ld, "2"', no head] \arrow[rdd, no head] &  & \\
L_3 \arrow[rrd, no head] &   &   & L_4 \arrow[ld, no head] \\
&  & \Q  &   
\end{tikzcd}
\quad
\begin{tikzcd}[column sep=0.8em]
& L \arrow[ld, no head] \arrow[d, no head] \arrow[rd, no head] & \\
L_{12} \arrow[rd, no head] & L_{12}' \arrow[d, no head]  & L_{12}'' \arrow[ld, no head] \\
& L_6'  & 
\end{tikzcd}
\quad 
\begin{tikzcd}[column sep=0.8em]
& L_{12} \arrow[ld, no head] \arrow[d, no head] \arrow[rd, no head] & \\
L_6 \arrow[rd, no head] & L_6' \arrow[d, no head]  & L_6''' \arrow[ld, no head] \\
& L_3  & 
\end{tikzcd}
\]
By the above diagrams, we have the following formulas by the equations (\ref{eq22a}) and (\ref{eq22c3}):
\begin{subequations} 
\begin{align}
   D_LD_2^2 &= D_{12}'D_8^2 \label{eq38a} \\
   D_{12}'' &= D_6'D_4^2 \label{eq38b} \\
   D_LD_6'^2 &= D_{12}D_{12}'D_{12}'' \label{eq38c} \\
   D_{12}D_3^2 &= D_6^2 D_6'. \label{eq38d}
  \end{align}
\end{subequations}
Let $\displaystyle c_1=\frac{D_6}{D_3}$, $\displaystyle c_2 = \frac{D_8}{D_4D_2}$ and $\displaystyle c_3 = \frac{D_{12}D_4D_2}{D_8D_6'}$. Then we have
$$
c_2^2=\frac{D_8^2}{D_4^2D_2^2}
\overset{\scriptsize (\ref{eq38a})}{=} \frac{D_L}{D_{12}'D_4^2}
\overset{\scriptsize (\ref{eq38c})}{=}\frac{D_{12}D_{12}''}{D_6'^2D_4^2}
\overset{\scriptsize (\ref{eq38b})}{=}\frac{D_{12}}{D_6'}
\, (=c_2c_3)
\overset{\scriptsize (\ref{eq38d})}{=}\frac{D_6^2}{D_3^2}
\, (=c_1^2),
$$
which implies that $c_1=c_2=c_3$.
\end{proof}

\item $G_T=\left \langle g, h : g^6=h^2=(gh)^2=1 \right \rangle \times \left \langle i \right \rangle \cong D_6 \times C_2$ : Let $L_6=L^{\left \langle hg, g^3i \right \rangle}$, $L_3=L^{\left \langle g^3, hg, i \right \rangle} \subset L_6$, $L_2=L^{\left \langle hg, gi \right \rangle} \subset L_6$ and $L_2'=L^{\left \langle g, hi \right \rangle}$. $T$ is the following type.

\begin{enumerate}[label=(\alph*)]
\addtocounter{enumii}{3}

\item \label{tor24d} $G_T=H_{24, d}:= \left \langle \bigl(\begin{smallmatrix}
1 & 0 & 0\\ 
0 & 0 & -1\\ 
0 & 1 & 1
\end{smallmatrix}\bigr), \bigl(\begin{smallmatrix}
-1 & 0 & 0\\ 
0 & 0 & 1\\ 
0 & 1 & 0
\end{smallmatrix}\bigr), \bigl(\begin{smallmatrix}
-1 & 0 & 0\\ 
0 & -1 & 0\\ 
0 & 0 & -1
\end{smallmatrix}\bigr) \right \rangle$ : $T = T_{L_2'/\Q} \times (T_{L_6/L_3} \cap T_{L_6/L_2})$ and $\displaystyle C(T) = D_2' \frac{D_6}{D_3D_2}$.

\end{enumerate}


\item \label{item3xvi} $G_T=\left \langle g, h : g^4=h^2=(gh)^3=1 \right \rangle \cong S_4$ : Let
$$
L_{12}=L^{ghg^{-1}}, \,
L_{12}'=L^{g^{-1}hg}, \,
L_8=L^{hg}, \,
L_8'=L^{gh}, \,
L_6=L^{\left \langle h, g^2hg^2 \right \rangle}, \,
L_6'=L^{\left \langle ghg^{-1}, g^{-1}hg \right \rangle}, \,
L_6''=L^g
$$
and
$$
L_4=L^{\left \langle hg, ghg^{-1} \right \rangle}, \,
L_4'=L^{\left \langle gh, g^{-1}hg \right \rangle}, \,
L_3=L^{\left \langle h, g^2 \right \rangle}, \,
L_3'=L^{\left \langle g, hg^2h \right \rangle}, \,
L_2=L^{\left \langle g^2, gh \right \rangle}.
$$
Since $L_i'$ is a Galois conjugate of $L_i$ for $i \in \left \{ 3,4,6,8,12 \right \}$, we have $D_i'=D_i$ for these $i$. $T$ is one of the following types. 
    
\begin{enumerate}[label=(\alph*)]
\addtocounter{enumii}{4}

\item \label{tor24e} $G_T=H_{24, e}:= \left \langle \bigl(\begin{smallmatrix}
0 & 0 & 1\\ 
0 & 1 & 0\\ 
-1 & 0 & 0
\end{smallmatrix}\bigr), \bigl(\begin{smallmatrix}
-1 & 0 & 0\\ 
0 & 0 & -1\\ 
0 & -1 & 0
\end{smallmatrix}\bigr) \right \rangle$ : $T = T_{L_6''/L_3'}$ and $\displaystyle C(T) = \frac{D_6''}{D_3}$.

\item \label{tor24f} $G_T=H_{24, f}:= \left \langle \bigl(\begin{smallmatrix}
0 & 0 & -1\\ 
0 & -1 & 0\\ 
1 & 0 & 0
\end{smallmatrix}\bigr), \bigl(\begin{smallmatrix}
1 & 0 & 0\\ 
0 & 0 & 1\\ 
0 & 1 & 0
\end{smallmatrix}\bigr) \right \rangle$ : $T = T_{L_6/L_3}$ and $\displaystyle C(T) = \frac{D_6}{D_3}$.

\item \label{tor24g} $G_T=H_{24, g}:= \left \langle \bigl(\begin{smallmatrix}
0 & -1 & 0\\ 
1 & 1 & 1\\ 
-1 & 0 & 0
\end{smallmatrix}\bigr), \bigl(\begin{smallmatrix}
-1 & -1 & 0\\ 
0 & 1 & 0\\ 
0 & 0 & -1
\end{smallmatrix}\bigr) \right \rangle$ : $T = T_{L_8/L_4} \cap T_{L_8/L_2}$ and $\displaystyle C(T) = \frac{D_8}{D_4D_2}$.

\item \label{tor24h} $G_T=H_{24, h}:= \left \langle \bigl(\begin{smallmatrix}
0 & 1 & 0\\ 
-1 & -1 & -1\\ 
1 & 0 & 0
\end{smallmatrix}\bigr), \bigl(\begin{smallmatrix}
1 & 1 & 0\\ 
0 & -1 & 0\\ 
0 & 0 & 1
\end{smallmatrix}\bigr) \right \rangle$ : $T = T_{L_4/\Q}$ and $\displaystyle C(T) = D_4$.

\item \label{tor24i} $G_T=H_{24, i}:= \left \langle \bigl(\begin{smallmatrix}
1 & 1 & 0\\ 
-2 & -1 & -1\\ 
0 & 0 & 1
\end{smallmatrix}\bigr), \bigl(\begin{smallmatrix}
-1 & -1 & -1\\ 
0 & 0 & 1\\ 
0 & 1 & 0
\end{smallmatrix}\bigr) \right \rangle$ : $T = T_{L_{12}/L_6'} \cap T_{L_{12}/L_4}$ and $\displaystyle C(T) = \frac{D_{12}}{D_6D_4}$.

\item \label{tor24j} $G_T=H_{24, j}:= \left \langle \bigl(\begin{smallmatrix}
-1 & -1 & 0\\ 
2 & 1 & 1\\ 
0 & 0 & -1
\end{smallmatrix}\bigr), \bigl(\begin{smallmatrix}
1 & 1 & 1\\ 
0 & 0 & -1\\ 
0 & -1 & 0
\end{smallmatrix}\bigr) \right \rangle$ : $T = \left \{ v \in T_{L_{12}'/L_6'} : hv \cdot gv \cdot g^2v = 1 \right \}$. Since
$$
1 \rightarrow T \rightarrow T_{L_{12}'/L_6'}
\xrightarrow{v \mapsto hv \cdot gv \cdot g^2v} T_{L_8'/L_4'} \cap T_{L_8'/L_2} \rightarrow 1
$$
is exact, we have $\displaystyle C(T) = \frac{D_{12}D_4D_2}{D_8D_6}$.
\end{enumerate}

\begin{lemma} \label{lem3c}
$\displaystyle \frac{D_6}{D_3} = D_4 = \frac{D_{12}D_4D_2}{D_8D_6}$.
\end{lemma}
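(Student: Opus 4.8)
The plan is to prove the two equalities $D_6/D_3 = D_4$ and $D_4 = \frac{D_{12}D_4D_2}{D_8D_6}$ separately. The first one will come straight from the $S_4$ discriminant formula (\ref{eq22d}) applied to $L/\Q$; the second, which amounts to $D_8D_6 = D_{12}D_2$, will be deduced from the first together with the $S_3$ formula (\ref{eq22b}) and the $A_4$ formula (\ref{eq22c1}), after introducing a single auxiliary sextic subfield of $L$.

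For $D_6/D_3 = D_4$: inside $\Gal(L/\Q) \cong S_4$ the subgroup $\langle h, g^2hg^2 \rangle$ fixing $L_6$ is a (non-normal) Klein four-group, it is contained in the Sylow $2$-subgroup $\langle h, g^2 \rangle$ fixing $L_3$ (so $L_3 \subset L_6$), we have $\Gal(L/L_6) \cong C_2^2$, and the Galois closure of $L_6/\Q$ is $L$ because $\langle h, g^2hg^2 \rangle$ contains no nontrivial normal subgroup of $S_4$. Thus $L_6$, $L_3$ and any quartic subfield $L_4$ (all quartic subfields being conjugate, hence of common discriminant $D_4$) are exactly in the configuration of Proposition \ref{prop22a}(4), and (\ref{eq22d}) gives $D_6 = D_4D_3$ (as $D_\Q = 1$), i.e. $D_6/D_3 = D_4$.

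For $D_8D_6 = D_{12}D_2$: let $V \cong C_2^2$ be the normal Klein four-subgroup of $\Gal(L/\Q) \cong S_4$ and set $M := L^V$, a sextic field which is Galois over $\Q$ with $\Gal(M/\Q) \cong S_4/V \cong S_3$. The three cubic subfields of $M$ are the fixed fields of the three Sylow $2$-subgroups of $S_4$ (hence are conjugates of $L_3$, of discriminant $D_3$) and its quadratic subfield is $L^{A_4} = L_2$, so (\ref{eq22b}) applied to $M/\Q$ gives $D_M = D_3^2D_2$. Next, $L/L_2$ is an $A_4$-extension in which $L_8 = L^{\langle hg \rangle}$ is a quartic subextension (as $\langle hg \rangle$ is a Sylow $3$-subgroup) and $M$ is the unique cubic subextension, so (\ref{eq22c1}) gives $D_LD_2^3 = D_8^3D_M$. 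Finally, $L/L_4$ is an $S_3$-extension in which $L_{12} = L^{\langle ghg^{-1} \rangle}$ is a cubic subextension and $L_8 = L^{\langle hg \rangle}$ is the quadratic subextension, so (\ref{eq22b}) gives $D_LD_4^2 = D_{12}^2D_8$. Eliminating $D_M$ from the first two relations yields $D_L = D_8^3D_3^2/D_2^2$; comparing with $D_L = D_{12}^2D_8/D_4^2$ gives $D_8^2D_3^2D_4^2 = D_{12}^2D_2^2$, hence $D_8D_3D_4 = D_{12}D_2$ since all quantities are positive. Combined with $D_3D_4 = D_6$ from the first part, this is $D_8D_6 = D_{12}D_2$, equivalently $D_4 = \frac{D_{12}D_4D_2}{D_8D_6}$.

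The only nontrivial part of the argument is the group-theoretic bookkeeping: one must identify the subgroup of $S_4$ fixing each of $L_2, L_3, L_4, L_6, L_8, L_{12}$ and verify the inclusion, normality and Galois-closure hypotheses needed to apply Proposition \ref{prop22a}; everything afterward is formal manipulation of the discriminant relations. (Alternatively, both equalities can be obtained by the zeta-function method used in the proof of Proposition \ref{prop22a}: writing $[F]$ for the $S_4$-permutation module $\operatorname{Ind}_{\Gal(L/F)}^{S_4} \mathbf 1$, one has the identities $[L_3] + [L_4] = [\Q] + [L_6]$ and $[L_8] + [L_6] = [L_{12}] + [L_2]$, giving $\zeta_{L_3}\zeta_{L_4} = \zeta_{\Q}\zeta_{L_6}$ and $\zeta_{L_8}\zeta_{L_6} = \zeta_{L_{12}}\zeta_{L_2}$, and since the degrees also match on both sides the functional equation forces the corresponding discriminant identities.)
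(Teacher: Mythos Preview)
Your proof is correct. The first equality $D_6/D_3=D_4$ is obtained exactly as in the paper, via Proposition~\ref{prop22a}(4) applied to $L/\Q$.

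For the second equality the paper takes a different route. It introduces the auxiliary fields $L_{12}''=L^{g^2}$ and $L_6'''=L^{\langle g^2,hg^2h\rangle}$ (the latter is your $M$), applies the $A_4$ relation (\ref{eq22c2}) to $L/L_2$ to get $D_{12}''D_2=D_8D_6'''$, and then uses a $D_4$-type identity (analogous to (\ref{eq35i})) on the tower $L/L^{\langle g^2,ghg\rangle}$ to obtain $D_{12}/D_6=D_{12}''/D_6'''$; combining these two yields $D_{12}D_2=D_8D_6$. Your argument replaces the $D_4$ step by two applications of the $S_3$ formula (\ref{eq22b}) (to $M/\Q$ and to $L/L_4$) together with the $A_4$ formula (\ref{eq22c1}) for $L/L_2$, reaching the same relation $D_8D_3D_4=D_{12}D_2$ without ever invoking the $D_4$ lattice. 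This is a genuinely different and somewhat more streamlined derivation: you need only the single auxiliary field $M=L_6'''$, whereas the paper also tracks $L_{12}''$, $L_{12}'''$, $L^{hg^2h}$, $L^{ghg}$ to set up the $D_4$ picture. The paper's approach has the advantage that the relation (\ref{eq38g}) is reused in the proof of Lemma~\ref{lem3d}, so the extra bookkeeping is not wasted; your approach is cleaner if one only wants Lemma~\ref{lem3c}.
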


\begin{proof}
Let $L_{12}''=L^{g^2}$, $L_{12}'''=L^{g^2hg^2h}$ and $L_6'''=L^{\left \langle g^2, hg^2h \right \rangle}$. Consider the following lattices of subfields. 
\[ 
\begin{tikzcd}[column sep=0.8em]
&  & L \arrow[ld, "C_2^2"', no head] \arrow[rdd, "S_3", no head] \arrow[ddd, "S_4", no head] & \\
& L_6 \arrow[ld, "2"', no head] \arrow[rdd, no head] &  & \\
L_3 \arrow[rrd, no head] &   &   & L_4 \arrow[ld, no head] \\
&  & \Q  &   
\end{tikzcd}
\quad
\begin{tikzcd}[column sep=0.8em]
&  & L \arrow[ld, "2"', no head] \arrow[rdd, "3", no head] \arrow[ddd, "A_4", no head] & \\
& L_{12}'' \arrow[ld, "2"', no head] \arrow[rdd, no head] &  & \\
L_6''' \arrow[rrd, no head] &   &   & L_8 \arrow[ld, no head] \\
&  & L_2  &   
\end{tikzcd}
\quad
\begin{tikzcd}[column sep=0.8em]
  &  & L \arrow[d, no head] \arrow[rd, no head] \arrow[rrd, no head] \arrow[ld, no head] &  &  \\
L_{12} \arrow[rru, no head] \arrow[rd, no head] & L_{12}' \arrow[d, no head] & L^{hg^2h} \arrow[ld, no head] \arrow[d, no head] \arrow[rd, no head]  & L_{12}''' \arrow[d, no head] & L_{12}'' \arrow[ld, no head] \\
  & L_6' \arrow[rd, no head] & L^{ghg} \arrow[d, no head]   & L_6''' \arrow[ld, no head] &  \\
  &  & L^{\left \langle g^2, ghg \right \rangle}  &   &  
\end{tikzcd}
\]
By the above diagrams, we have the following formulas. Note that $L_6^c=L$ and $\Gal(L/L_6) \cong C_2^2$ so we can apply Proposition \ref{prop22a}(4).
\begin{subequations} 
\begin{align}
   D_6 &= D_3D_4 \label{eq38e} \\
   D_{12}''D_2 &= D_8D_6''' \label{eq38f} \\
   \frac{D_{12}}{D_6} &= \frac{D_{12}''}{D_6'''}. \label{eq38g}
  \end{align}
\end{subequations}
The first two formulas come from the equations (\ref{eq22d}) and (\ref{eq22c2}), and the last formula can be obtained exactly same as the equation (\ref{eq35i}). 
Now the equation (\ref{eq38e}) implies that $\displaystyle \frac{D_6}{D_3} = D_4$ and the equations (\ref{eq38f}) and (\ref{eq38g}) imply that $\displaystyle D_4 = \frac{D_{12}D_4D_2}{D_8D_6}$.
\end{proof}

\begin{lemma} \label{lem3d}
$\displaystyle \frac{D_6''}{D_3}
=\frac{D_8}{D_4D_2}
= \frac{D_{12}}{D_6D_4}$.
\end{lemma}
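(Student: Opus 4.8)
The plan is to reduce the lemma to Lemma~\ref{lem3c} together with two discriminant relations read off from Proposition~\ref{prop22a}.

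First, the equality $\frac{D_8}{D_4D_2}=\frac{D_{12}}{D_6D_4}$ is already a consequence of Lemma~\ref{lem3c}: the identity $\frac{D_{12}D_4D_2}{D_8D_6}=D_4$ there is equivalent to $D_8D_6=D_{12}D_2$, and dividing by $D_2D_4D_6$ gives exactly $\frac{D_8}{D_4D_2}=\frac{D_{12}}{D_6D_4}$. So it remains to prove $\frac{D_6''}{D_3}=\frac{D_8}{D_4D_2}$; and since Lemma~\ref{lem3c} also gives $D_6=D_3D_4$, this is equivalent to the single identity $D_8D_3^2=D_2D_6''D_6$.

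To prove the latter I introduce $L_{12}'':=L^{g^2}$ and $L_6''':=L^{\langle g^2,hg^2h\rangle}$ (the same auxiliary fields used in the proof of Lemma~\ref{lem3c}). The key observation is that $\langle g,hg^2h\rangle$ is the normalizer of $\langle g\rangle$ in $G_T\cong S_4$, a copy of $D_4$ whose centre is $\langle g^2\rangle$; hence $L_{12}''/L_3'$ is Galois with group $D_4/\langle g^2\rangle\cong C_2^2$. Its three quadratic subextensions are the fixed fields of the three order-$4$ subgroups of $\langle g,hg^2h\rangle$ (each of which contains $\langle g^2\rangle$): these are $L^g=L_6''$, $L^{\langle g^2,hg^2h\rangle}=L_6'''$, and a third field whose fixing group is $G_T$-conjugate to $\langle h,g^2hg^2\rangle$ and which therefore has discriminant $D_6$ (recall $L_6=L^{\langle h,g^2hg^2\rangle}$). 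Applying the $C_2^2$-formula \eqref{eq22a} to $L_{12}''/L_3'$, and using $D_{L_3'}=D_3$, gives $D_{12}''D_3^2=D_6''D_6'''D_6$. On the other hand $L/L_2$ is an $A_4$-extension in which $L_6'''$ is the unique cubic subextension (the fixed field of the normal Klein four-subgroup $\langle g^2,hg^2h\rangle$ of $A_4$), $L_{12}''$ is the sextic subextension lying above it, and $L_8=L^{hg}$ is a quartic subextension; so \eqref{eq22c2} of Proposition~\ref{prop22a} gives $D_{12}''D_2=D_8D_6'''$.

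Dividing the relation $D_{12}''D_3^2=D_6''D_6'''D_6$ by $D_{12}''D_2=D_8D_6'''$ cancels both $D_{12}''$ and $D_6'''$, leaving $\frac{D_3^2}{D_2}=\frac{D_6''D_6}{D_8}$, i.e. $D_8D_3^2=D_2D_6''D_6$, which is exactly what was needed; together with the first step this proves all three equalities. The only genuinely nontrivial part is the group-theoretic bookkeeping: after fixing an isomorphism $G_T\cong S_4$ under which $g$ is a $4$-cycle and $h$ a transposition with $gh$ of order $3$, one must check that $\langle g,hg^2h\rangle$ is the $D_4$ normalizing $\langle g\rangle$ and has centre $\langle g^2\rangle$, that $L_6'''$ plays the stated double role (quadratic over $L_3'$ inside $L_{12}''$, and cubic over $L_2$ inside $L$), and that the remaining quadratic subextension of $L_{12}''/L_3'$ is conjugate to $L_6$; all of this is routine once the isomorphism is pinned down. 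As an alternative to the two displayed relations, one may instead prove $D_{12}=D_6''D_4^2$ directly from the $S_4$-character identity $\operatorname{Ind}_{\langle ghg^{-1}\rangle}^{S_4}\mathbf 1\oplus\mathbf 1^{\oplus 2}\cong\operatorname{Ind}_{\langle g\rangle}^{S_4}\mathbf 1\oplus\bigl(\operatorname{Ind}_{\langle hg,\,ghg^{-1}\rangle}^{S_4}\mathbf 1\bigr)^{\oplus 2}$, which via Artin formalism gives $\zeta_{L_{12}}(s)\zeta_\Q(s)^2=\zeta_{L_6''}(s)\zeta_{L_4}(s)^2$, and then compare functional equations exactly as in the proof of Proposition~\ref{prop22a}(3); combined with Lemma~\ref{lem3c} this again yields all three equalities.
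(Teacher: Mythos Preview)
Your proof is correct and follows essentially the same route as the paper's. The paper also uses the $C_2^2$-relation for $L_{12}''/L_3'$ and the $A_4$-relation $D_{12}''D_2=D_8D_6'''$ (this is exactly equation~\eqref{eq38f}); the only differences are that the paper names your ``third field'' $L_6''''=L^{\langle g^2,ghg^2h\rangle}$ and obtains $D_6''''=D_3D_4$ via Proposition~\ref{prop22a}(4) rather than by your conjugacy observation $L_6''''\sim L_6$, and it reaches the equality $\frac{D_8}{D_4D_2}=\frac{D_{12}}{D_6D_4}$ through equation~\eqref{eq38g} instead of reading it off directly from Lemma~\ref{lem3c}. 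Both shortcuts you take are valid and slightly streamline the argument.
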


\begin{proof}
Let $L_6'''' = L^{\left \langle g^2, ghg^2h \right \rangle}$ and consider the following lattices of subfields. Since $(L_6'''')^c=L$ and $\Gal(L/L_6'''') \cong C_2^2$, we can apply Proposition \ref{prop22a}(4). 
\[ 
\begin{tikzcd}[column sep=0.8em]
&  & L \arrow[ld, "C_2^2"', no head] \arrow[rdd, "S_3", no head] \arrow[ddd, "S_4", no head] & \\
& L_6'''' \arrow[ld, "2"', no head] \arrow[rdd, no head] &  & \\
L_3' \arrow[rrd, no head] &   &   & L_4 \arrow[ld, no head] \\
&  & \Q  &   
\end{tikzcd}
\quad
\begin{tikzcd}[column sep=0.8em]
& L_{12}'' \arrow[ld, no head] \arrow[d, no head] \arrow[rd, no head] & \\
L_6'' \arrow[rd, no head] & L_6''' \arrow[d, no head]  & L_6'''' \arrow[ld, no head] \\
& L_3'  & 
\end{tikzcd}
\]
By the above diagrams, we have the following formulas by the equations (\ref{eq22a}) and (\ref{eq22d}):
\begin{subequations} 
\begin{align}
   D_6'''' &= D_3D_4 \label{eq38h} \\
   D_{12}''D_3^2 &= D_6''D_6'''D_6''''. \label{eq38i}
  \end{align}
\end{subequations}
Now we have
\begin{equation*}
\frac{D_6''}{D_3} 
\overset{\scriptsize (\ref{eq38i})}{=} \frac{D_{12}''D_3}{D_6'''D_6''''}
\overset{\scriptsize (\ref{eq38h})}{=} \frac{D_{12}''}{D_6'''D_4}
\overset{\scriptsize (\ref{eq38g})}{=} \frac{D_{12}}{D_6D_4} \text{ and } \frac{D_{12}''}{D_6'''D_4}
\overset{\scriptsize (\ref{eq38f})}{=} \frac{D_8}{D_4D_2}. \qedhere
\end{equation*}
\end{proof}

\item $G_T=\left \langle g, h : g^4=h^2=(gh)^3=1 \right \rangle \times \left \langle i \right \rangle \cong S_4 \times C_2$ : Let
$$
L_{12}=L^{\left \langle ghg^{-1}, g^{-1}hgi \right \rangle}, \,
L_8=L^{\left \langle hg, ghg^{-1}i \right \rangle}, \,
L_8'=L^{\left \langle gh, g^{-1}hg \right \rangle}, \,
L_6=L^{\left \langle g, hg^2hi \right \rangle}, \,
L_6'=L^{\left \langle ghg^{-1}, g^{-1}hg, i \right \rangle}
$$
and
$$
L_4=L^{\left \langle hg, ghg^{-1}, i \right \rangle}, \,
L_4'=L^{\left \langle gh, g^{-1}hg, i \right \rangle}, \,
L_3=L^{\left \langle g, hg^2h, i \right \rangle}, \,
L_2=L^{\left \langle gi, hi \right \rangle}, \,
L_2'=L^{\left \langle g, h \right \rangle}.
$$
$L_4'$ is a Galois conjugate of $L_4$ so $D_4'=D_4$. $T$ is one of the following types.  

\begin{enumerate}[label=(\alph*)]

    \item \label{tor48a} $G_T = H_{48,a} := \left \langle \bigl(\begin{smallmatrix}
0 & 0 & 1\\ 
0 & 1 & 0\\ 
-1 & 0 & 0
\end{smallmatrix}\bigr), \bigl(\begin{smallmatrix}
-1 & 0 & 0\\ 
0 & 0 & -1\\ 
0 & -1 & 0
\end{smallmatrix}\bigr), \bigl(\begin{smallmatrix}
-1 & 0 & 0\\ 
0 & -1 & 0\\ 
0 & 0 & -1
\end{smallmatrix}\bigr) \right \rangle$ : $T = T_{L_6/L_3}$ and $\displaystyle C(T) = \frac{D_6}{D_3}$.
        
    \item \label{tor48b} $G_T = H_{48,b} := \left \langle \bigl(\begin{smallmatrix}
0 & -1 & 0\\ 
1 & 1 & 1\\ 
-1 & 0 & 0
\end{smallmatrix}\bigr), \bigl(\begin{smallmatrix}
-1 & -1 & 0\\ 
0 & 1 & 0\\ 
0 & 0 & -1
\end{smallmatrix}\bigr), \bigl(\begin{smallmatrix}
-1 & 0 & 0\\ 
0 & -1 & 0\\ 
0 & 0 & -1
\end{smallmatrix}\bigr) \right \rangle$ : $T = T_{L_8/L_4} \cap T_{L_8/L_2}$ and $\displaystyle C(T) = \frac{D_8}{D_4D_2}$.

    \item \label{tor48c} $G_T = H_{48,c} := \left \langle \bigl(\begin{smallmatrix}
1 & 1 & 0\\ 
-2 & -1 & -1\\ 
0 & 0 & 1
\end{smallmatrix}\bigr), \bigl(\begin{smallmatrix}
-1 & -1 & -1\\ 
0 & 0 & 1\\ 
0 & 1 & 0
\end{smallmatrix}\bigr), \bigl(\begin{smallmatrix}
-1 & 0 & 0\\ 
0 & -1 & 0\\ 
0 & 0 & -1
\end{smallmatrix}\bigr) \right \rangle$ : $T = \left \{ v \in T_{L_{12}/L_6'} : hgv \cdot g^2v = hv \right \}$. Since
$$
1 \rightarrow T \rightarrow T_{L_{12}/L_6'}
\xrightarrow{v \mapsto \frac{hgv \cdot g^2v}{hv}} 
T_{L_8'/L_4'} \cap T_{L_8'/L_2'} \rightarrow 1
$$
is exact, we have $\displaystyle C(T) = \frac{D_{12}D_4D_2'}{D_8'D_6'}$.
\end{enumerate}

\begin{lemma} \label{lem3e}
$\displaystyle \frac{D_6}{D_3} = \frac{D_8}{D_4D_2} = \frac{D_{12}D_4D_2'}{D_8'D_6'}$.
\end{lemma}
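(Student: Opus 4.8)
The three subgroups $H_{48,a}$, $H_{48,b}$, $H_{48,c}$ of $\GL_3(\Z)$ are $\Z[S_4 \times C_2]$-lattices which become isomorphic after $\otimes_{\Z}\Q$ — each is a $\Z$-form of the (essentially unique) faithful three-dimensional rational representation of $S_4 \times C_2$ — and the Artin conductor of a torus, being built from local Artin conductors, depends only on the isomorphism class of the rational Galois module $X^*(T)\otimes\Q$; so the three conductor formulas must agree. To prove this explicitly and self-containedly, the plan is to mimic the proofs of Lemmas \ref{lem3b}, \ref{lem3c} and \ref{lem3d}: adjoin a handful of auxiliary subfields of $L$, draw their subfield lattices, read off from Proposition \ref{prop22a} the discriminant identity attached to each sub-lattice of type $C_2^2$, $S_3$, $A_4$ or $S_4$, and chain these identities. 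Since the statement is almost identical to that of Lemma \ref{lem3b} (the $A_4 \times C_2$ case), the argument should run in close parallel, with the $A_4$-towers there replaced by $S_4$-towers; accordingly one uses Proposition \ref{prop22a}(4) — equivalently the relation $D_{L_6}D_K = D_{L_4}D_{L_3}$ of \eqref{eq22d} together with Lemma \ref{lem3c} — in place of Proposition \ref{prop22a}(3).

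Concretely, I would first adjoin the $S_4$-subextension $F := L^i$, so that $\Gal(F/\Q) \cong S_4$ and $L_3$, $L_4$, $L_6'$ are subfields of $F$, together with suitable degree-$6$ and degree-$12$ subfields of $L$ playing the roles of $L_6''$, $L_6'''$, $L_{12}'$, $L_{12}''$ in Lemmas \ref{lem3b} and \ref{lem3d}. For each biquadratic sub-lattice among these fields I record the conductor--discriminant identity \eqref{eq22a}; for each $S_3$-tower the identity \eqref{eq22b}; for each $A_4$-tower inside $L$ the relevant one of \eqref{eq22c1}--\eqref{eq22c3}; and for each $S_4$-tower the identity \eqref{eq22d}. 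To obtain $\frac{D_6}{D_3} = \frac{D_8}{D_4 D_2}$ I would use that $L = L_6 \cdot F$ with $L_6 \cap F = L_3$ and that $L_8 = L_4 \cdot L_2$ with $L_4 \cap L_2 = \Q$, combine these with the above identities internal to $F$ (which amount to Lemma \ref{lem3c}) and with the conductor--discriminant formula to express both ratios through discriminants of subfields of $F$ and of the quadratic fields $L_2$, $L_2'$, and then conclude by a formal manipulation. To obtain $\frac{D_8}{D_4 D_2} = \frac{D_{12}D_4 D_2'}{D_8' D_6'}$ I would follow the corresponding step of the proof of Lemma \ref{lem3b}, substituting the $S_4$-relations for the $A_4$-relations used there and keeping track of the placement of $L_2$ versus $L_2'$.

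The main difficulty is combinatorial rather than conceptual: $S_4 \times C_2$ has many subgroups of orders $8$, $12$ and $16$, hence many non-conjugate (and several mutually conjugate) subfields of $L$ of degrees $3$ and $6$, so one must pin down exactly which subgroup, up to which conjugacy class, each auxiliary field corresponds to, in order to be certain that every intermediate extension appearing in the chain has one of the four Galois groups $C_2^2$, $S_3$, $A_4$, $S_4$ for which Proposition \ref{prop22a} supplies a discriminant relation. As in the proof of Lemma \ref{lem3d}, the reliable way to do this is to write out the explicit subfield lattices and check the normality and the normal core of each relevant subgroup; this bookkeeping is the delicate part of the argument.
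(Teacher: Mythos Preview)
Your opening sentence is in fact a complete and correct proof, and it is cleaner than the paper's approach: the Artin conductor of the Galois representation on $X^*(T)$ is defined through the character of $X^*(T)\otimes_{\Z}\C$ (the local conductor exponent at each prime is an alternating sum of dimensions of fixed spaces under higher ramification groups), so it depends only on the rationalisation; since $H_{48,a}$, $H_{48,b}$, $H_{48,c}$ are three $\Z$-forms of the unique faithful three-dimensional $\Q$-representation of $S_4\times C_2$, the three conductors agree automatically. The paper does not invoke this and instead carries out the explicit subfield-lattice computation you sketch afterwards, so your conceptual argument is a genuine shortcut that bypasses all of the combinatorics. (The same remark would also collapse Lemmas \ref{lem3b}, \ref{lem3c}, \ref{lem3d} to one line each.)

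Your sketch of the explicit route is essentially what the paper does, with one small correction worth noting: for the second equality $\frac{D_8}{D_4D_2}=\frac{D_{12}D_4D_2'}{D_8'D_6'}$ the paper does \emph{not} replace the $A_4$-relations by $S_4$-relations as you anticipate; it still uses two genuine $A_4$-towers (namely $L/L_4''$ and $L_{24}/L_2''$, where $L_4''$ and $L_2''$ are auxiliary Klein-four fixed fields) together with four $C_2^2$-squares, and then compares \emph{squares} of the two ratios rather than the ratios themselves, exactly as in Lemma \ref{lem3b}. So if you were to fill in your sketch you would want to keep Proposition \ref{prop22a}(3) in play rather than trading it wholesale for (4). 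But given that your first sentence already proves the lemma, this is moot.
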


\begin{proof}
Let
$$
L_{24}=L^i, \, 
L_{12}'=L^{\left \langle g^2, ghg^2hi \right \rangle}, \,
L_6''=L^{\left \langle hg^2h, gi \right \rangle}
\text{ and }
L_6'''=L^{\left \langle g^2, ghg^2h, i \right \rangle}.
$$
Consider the following $3$ lattices of subfields. 
\[ 
\begin{tikzcd}[column sep=0.8em]
&  & L \arrow[ld, "C_2^2"', no head] \arrow[rdd, "S_3", no head] \arrow[ddd, "S_4", no head] & \\
& L_{12}' \arrow[ld, "2"', no head] \arrow[rdd, no head] &  & \\
L_6'' \arrow[rrd, no head] &   &   & L_8 \arrow[ld, no head] \\
&  & L_2  &   
\end{tikzcd}
\quad
\begin{tikzcd}[column sep=0.8em]
&  & L_{24} \arrow[ld, "C_2^2"', no head] \arrow[rdd, "S_3", no head] \arrow[ddd, "S_4", no head] & \\
& L_6''' \arrow[ld, "2"', no head] \arrow[rdd, no head] &  & \\
L_3 \arrow[rrd, no head] &   &   & L_4 \arrow[ld, no head] \\
&  & \Q  &   
\end{tikzcd}
\quad
\begin{tikzcd}[column sep=0.8em]
& L_{12}' \arrow[ld, no head] \arrow[d, no head] \arrow[rd, no head] & \\
L_6 \arrow[rd, no head] & L_6'' \arrow[d, no head]  & L_6''' \arrow[ld, no head] \\
& L_3  & 
\end{tikzcd}
\]
By the above diagrams, we have the following formulas by the equations (\ref{eq22a}) and (\ref{eq22d}): 
\begin{subequations} 
\begin{align}
   D_{12}'D_2 &= D_8D_6'' \label{eq39a} \\
   D_6''' &= D_4D_3 \label{eq39b} \\
   D_{12}'D_3^2 &= D_6D_6''D_6'''. \label{eq39c}
  \end{align}
\end{subequations}
These equations imply that
$$
\displaystyle \frac{D_6}{D_3}
\overset{\scriptsize (\ref{eq39c})}{=} \frac{D_{12}'D_3}{D_6''D_6'''}
\overset{\scriptsize (\ref{eq39a})}{=} \frac{D_8D_3}{D_6'''D_2}
\overset{\scriptsize (\ref{eq39b})}{=} \frac{D_8}{D_4D_2}.
$$
Now we prove the second equality. Let
$$
L_{24}'=L^{hg^2hi}, \,
L_{24}''=L^{hg^2h}, \,
L_{16}=L^{hg}, \,
L_{12}''=L^{\left \langle hg^2h, i \right \rangle}, \, 
L_{12}'''=L^{\left \langle g^2, hg^2h \right \rangle}, \, 
L_{12}''''=L^{\left \langle g^{-1}hg, ghg^{-1}i \right \rangle}
$$
and
$$
L_8''=L^{\left \langle hg, ghg^{-1} \right \rangle}, \,
L_8'''=L^{\left \langle hg, i \right \rangle}, \, 
L_6''''=L^{\left \langle g^2, hg^2h, i \right \rangle}, \, 
L_4''=L^{\left \langle gh, hg \right \rangle}, \, 
L_2''=L^{\left \langle gh, hg, i \right \rangle}.
$$
Since $L_{12}''''$ is a Galois conjugate of $L_{12}$ and $L_8''$ is a Galois conjugate of $L_8'$, we have $D_{12}''''=D_{12}$ and $D_8''=D_8'$. Consider the following $6$ lattices of subfields. 
\[ 
\begin{tikzcd}[column sep=0.8em]
&  & L \arrow[ld, "2"', no head] \arrow[rdd, "3", no head] \arrow[ddd, "A_4", no head] & \\
& L_{24}'' \arrow[ld, "2"', no head] \arrow[rdd, no head] &  & \\
L_{12}''' \arrow[rrd, no head] &   &   & L_{16} \arrow[ld, no head] \\
&  & L_4''  &   
\end{tikzcd}
\quad
\begin{tikzcd}[column sep=0.8em]
&  & L_{24} \arrow[ld, "2"', no head] \arrow[rdd, "3", no head] \arrow[ddd, "A_4", no head] & \\
& L_{12}'' \arrow[ld, "2"', no head] \arrow[rdd, no head] &  & \\
L_6'''' \arrow[rrd, no head] &   &   & L_8''' \arrow[ld, no head] \\
&  & L_2''  &   
\end{tikzcd}
\quad
\begin{tikzcd}[column sep=0.8em]
& L \arrow[ld, no head] \arrow[d, no head] \arrow[rd, no head] & \\
L_{24} \arrow[rd, no head] & L_{24}' \arrow[d, no head]  & L_{24}'' \arrow[ld, no head] \\
& L_{12}''  & 
\end{tikzcd}
\]
\[ 
\begin{tikzcd}[column sep=0.8em]
& L_{24}' \arrow[ld, no head] \arrow[d, no head] \arrow[rd, no head] & \\
L_{12} \arrow[rd, no head] & L_{12}'' \arrow[d, no head]  & L_{12}'''' \arrow[ld, no head] \\
& L_6'  & 
\end{tikzcd}
\quad
\begin{tikzcd}[column sep=0.8em]
& L_{16} \arrow[ld, no head] \arrow[d, no head] \arrow[rd, no head] & \\
L_8 \arrow[rd, no head] & L_8'' \arrow[d, no head]  & L_8''' \arrow[ld, no head] \\
& L_4  & 
\end{tikzcd}
\quad
\begin{tikzcd}[column sep=0.8em]
& L_4'' \arrow[ld, no head] \arrow[d, no head] \arrow[rd, no head] & \\
L_2 \arrow[rd, no head] & L_2' \arrow[d, no head]  & L_2'' \arrow[ld, no head] \\
& \Q  & 
\end{tikzcd}
\]
By the above diagrams, we have the following formulas by the equations (\ref{eq22a}) and (\ref{eq22c3}):
\begin{subequations} 
\begin{align}
   D_LD_4''^2 &= D_{24}''D_{16}^2 \label{eq39d} \\
   D_{24}D_2''^2 &= D_{12}''D_8'''^2 \label{eq39e} \\
   D_LD_{12}''^2 &= D_{24}D_{24}'D_{24}'' \label{eq39f} \\
   D_{24}'D_6'^2 &= D_{12}^2D_{12}'' \label{eq39g} \\
   D_{16}D_4^2 &= D_8D_8'D_8''' \label{eq39h} \\
   D_4'' &= D_2D_2'D_2''. \label{eq39i}
  \end{align}
\end{subequations}
These equations imply that
\begin{equation*} 
\begin{split}
\left ( \frac{D_8}{D_4D_2} \right )^2 
&\overset{\scriptsize (\ref{eq39h})}{=} \frac{D_{16}^2D_4^2}{D_8'^2D_8'''^2D_2^2} \\
&\overset{\scriptsize (\ref{eq39d})}{=} \frac{D_LD_4^2D_4''^2}{D_{24}''D_8'^2D_8'''^2D_2^2} \\
&\overset{\scriptsize (\ref{eq39f})}{=} \frac{D_{24}D_{24}'D_4^2D_4''^2}{D_{12}''^2D_8'^2D_8'''^2D_2^2} \\
&\overset{\scriptsize (\ref{eq39e})}{=} \frac{D_{24}'D_4^2D_4''^2}{D_{12}''D_8'^2D_2^2D_2''^2} \\
&\overset{\scriptsize (\ref{eq39g})}{=} \frac{D_{12}^2D_4^2D_4''^2}{D_8'^2D_6'^2D_2^2D_2''^2} \\
&\overset{\scriptsize (\ref{eq39i})}{=} \left ( \frac{D_{12}D_4D_2'}{D_8'D_6'} \right )^2,
\end{split}
\end{equation*}
which proves the second equality. 
\end{proof}

\end{enumerate}

\section{Counting algebraic tori over $\Q$ of dimension $3$} \label{Sec4}

In this section, we count the number of the isomorphism classes of $3$-dimensional tori over $\Q$. We provide asymptotic upper and lower bounds of $N_3^{\tor}(X; H)$ for each finite subgroup $H \neq 1$ of $\GL_3(\Z)$. See Table \ref{table1} for the computation of the numbers $a(H)$ and $b(H)$ for each $H$. This gives an asymptotic upper bound of $N_3^{\tor}(X)$ which is close to the asymptotics predicted by Conjecture \ref{conj1a}. 

\begin{table}[ht]
\centering
\small
\renewcommand{\arraystretch}{0.9}
\begin{tabular}{ |c|c|c|c|c|c|c|c|c|c|c|c| } 
\hline
$\left | H \right |$ & $H$ & $a(H)$ & $b(H)$ & $\left | H \right |$ & $H$ & $a(H)$ & $b(H)$ & $\left | H \right |$ & $H$ & $a(H)$ & $b(H)$ \\ \hline
 \multirow{5}{*}{2} & $H_{2,a}$ & 2 & 1 
 & \multirow{8}{*}{6} & $H_{6,c}$ & 2 & 1 
 & \multirow{9}{*}{12} & $H_{12,c}$ & 1 & 2 \\ \cline{2-4} \cline{6-8} \cline{10-12}
 & $H_{2,b}$ & 1 & 1 
 & & $H_{6,d}$ & 2 & 1 
 & & $H_{12,d}$ & 1 & 2 \\ \cline{2-4} \cline{6-8} \cline{10-12}
 & $H_{2,c}$ & 2& 1 
 & & $H_{6,e}$ & 2 & 2
 & & $H_{12,e}$ & 1 & 2 \\ \cline{2-4} \cline{6-8} \cline{10-12}
 & $H_{2,d}$ & 1 & 1 
 & & $H_{6,f}$ & 1 & 1 
 & & $H_{12,f}$ & 1 & 1 \\ \cline{2-4} \cline{6-8} \cline{10-12}
 & $H_{2,e}$ & 3 & 1 
 & & $H_{6,g}$ & 2 & 2 
 & & $H_{12,g}$ & 1 & 1 \\ \cline{1-4} \cline{6-8}\cline{10-12}
 \multirow{2}{*}{3} & $H_{3,a}$ & 1 & 1 
 & & $H_{6,h}$ & 1 & 1 
 & & $H_{12,h}$ & 1 & 1 \\ \cline{2-4} \cline{6-8}\cline{10-12}
 & $H_{3,b}$ & 1 & 1 
 & & $H_{6,i}$ & 2 & 2 
 & & $H_{12,i}$ & 2 & 2 \\ \cline{1-4} \cline{6-8}\cline{10-12}
 \multirow{15}{*}{4} & $H_{4,a}$ & 2 & 2 
 & & $H_{6,j}$ & 1 & 1 
 & & $H_{12,j}$ & 2 & 2 \\ \cline{2-4} \cline{5-8} \cline{10-12}
 & $H_{4,b}$ & 2 & 1 
 & \multirow{14}{*}{8} & $H_{8,a}$ & 1 & 1 
 & & $H_{12,k}$ & 2 & 2 \\ \cline{2-4} \cline{6-8} \cline{9-12}
 & $H_{4,c}$ & 2 & 2 
 & & $H_{8,b}$ & 1 & 1 
 & \multirow{2}{*}{16} & $H_{16,a}$ & 1 & 3 \\ \cline{2-4} \cline{6-8} \cline{10-12}
 & $H_{4,d}$ & 2 & 1 
 & & $H_{8,c}$ & 1 & 3 
 & & $H_{16,b}$ & 1 & 3 \\ \cline{2-4} \cline{6-8} \cline{9-12}
 & $H_{4,e}$ & 1 & 1 
 & & $H_{8,d}$ & 1 & 3 
 & \multirow{10}{*}{24} & $H_{24,a}$ & 1 & 1 \\ \cline{2-4} \cline{6-8} \cline{10-12}
 & $H_{4,f}$ & 2 & 3 
 & & $H_{8,e}$ & 1 & 3 
 & & $H_{24,b}$ & 1 & 1 \\ \cline{2-4} \cline{6-8} \cline{10-12}
 & $H_{4,g}$ & 1 & 2 
 & & $H_{8,f}$ & 1 & 3 
 & & $H_{24,c}$ & 1 & 1 \\ \cline{2-4} \cline{6-8} \cline{10-12}
 & $H_{4,h}$ & 2 & 3 
 & & $H_{8,g}$ & 2 & 4 
 & & $H_{24,d}$ & 1 & 3 \\ \cline{2-4} \cline{6-8} \cline{10-12}
 & $H_{4,i}$ & 1 & 2 
 & & $H_{8,h}$ & 1 & 2
 & & $H_{24,e}$ & 2 & 4 \\ \cline{2-4} \cline{6-8} \cline{10-12}
 & $H_{4,j}$ & 1 & 2 
 & & $H_{8,i}$ & 1 & 1 
 & & $H_{24,f}$ & 1 & 1 \\ \cline{2-4} \cline{6-8} \cline{10-12}
 & $H_{4,k}$ & 1 & 1 
 & & $H_{8,j}$ & 1 & 1 
 & & $H_{24,g}$ & 2 & 4 \\ \cline{2-4} \cline{6-8} \cline{10-12}
 & $H_{4,l}$ & 2 & 3 
 & & $H_{8,k}$ & 2 & 4 
 & & $H_{24,h}$ & 1 & 1 \\ \cline{2-4} \cline{6-8} \cline{10-12}
 & $H_{4,m}$ & 1 & 2 
 & & $H_{8,l}$ & 1 & 2 
 & & $H_{24,i}$ & 2 & 4 \\ \cline{2-4} \cline{6-8} \cline{10-12}
 & $H_{4,n}$ & 2 & 3 
 & & $H_{8,m}$ & 1 & 1 
 & & $H_{24,j}$ & 1 & 1 \\ \cline{2-4} \cline{6-8} \cline{9-12}
 & $H_{4,o}$ & 1 & 2 
 & & $H_{8,n}$ & 1 & 1 
 & \multirow{3}{*}{48} & $H_{48,a}$ & 1 & 2 \\  \cline{1-4} \cline{5-8}\cline{10-12}
  \multirow{2}{*}{6} & $H_{6,a}$ & 2 & 3 & \multirow{2}{*}{12} & $H_{12,a}$ & 1 & 1 & & $H_{48,b}$ & 1 & 2 \\ \cline{2-4} \cline{6-8}\cline{10-12}
   & $H_{6,b}$ & 1 & 1 & & $H_{12,b}$ & 2 & 5 & & $H_{48,c}$ & 1 & 2 \\ \hline
\end{tabular}
\renewcommand{\arraystretch}{1}
\caption{Computations of $a(H)$ and $b(H)$}
\label{table1}
\end{table}

Throughout this section, $C$ denotes a positive constant which may change from line to line. Denote the set of positive squarefree integers by $\Sqf$. For a prime $p$, denote by $\Sqf_p$ the set of positive integers which are squarefree outside $p$. 
We also introduce a notation which generalizes $N_n(X; G; I)$ defined in Section \ref{Sub21}. Let $J$ be an invariant of pairs of number fields such that for every $X > 0$, there are finitely many pairs of number fields $(K_1, K_2)$ such that $J(K_1, K_2) \leq X$. Denote by $N_{n_1, n_2}^{K_1, K_2}(X; G_1, G_2; J) = N_{n_1, n_2}(X; G_1, G_2; J)$ the number of $(K_1, K_2) \in \NF_{n_1}(G_1) \times \NF_{n_2}(G_2)$ such that $J(K_1, K_2) \leq X$. When $n_i = 2$, we may omit the group $G_i = C_2$ as before.

\subsection{Abelian case} \label{Sub41}

In this section, we prove Conjecture \ref{conj1b} for every finite abelian subgroup $1 \neq H \leq \GL_3(\Z)$. First we prove this except for the cases $H=H_{6, c}$ and $H=H_{6, d}$. 

\begin{proposition} \label{prop41a}
Conjecture \ref{conj1b} holds for every finite abelian subgroup $1 \neq H \leq \GL_3(\Z)$ which is not conjugate to $H_{6, c}$ or $H_{6, d}$.
\end{proposition}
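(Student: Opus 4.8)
The plan is to run through the finite abelian subgroups $1\neq H\leq\GL_3(\Z)$ one at a time, using the list compiled in Section \ref{Sec3}. For such an $H$ the splitting field $L$ of a torus $T$ of type $H$ is an abelian extension of $\Q$ with $\Gal(L/\Q)\cong H$, and the conductor $C(T)$ recorded in Section \ref{Sec3} is an explicit monomial in the discriminants of subfields of $L$. The first step is, for each $H$ other than $H_{6,c}$ and $H_{6,d}$, to rewrite that monomial as a product $\prod_j \Delta_j^{e_j}$ in which each $\Delta_j$ is one of the quantities whose asymptotic distribution is already known --- $D_F$ for $F$ a quadratic, cyclic cubic, cyclic quartic or biquadratic field (Proposition \ref{prop21a}(1) and classical estimates), or $D_{L_4}/D_{L_2}$ for a cyclic quartic field (Proposition \ref{prop21b}(1)), or $D_{L_6}/(D_{L_2}D_{L_3})$ for a cyclic sextic field (Proposition \ref{prop21b}(2)) --- and in which the fields $F_j$ carrying the factors $\Delta_j$ are pairwise linearly disjoint (this disjointness is read off from the subgroup lattice of $H$). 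For the $C_2^2$- and $C_2^3$-cases --- for instance $H_{4,f}$, $H_{4,h}$, $H_{4,l}$, $H_{4,n}$ and $H_{8,c}$ through $H_{8,f}$, whose $C(T)$ equals $D_L$ of a biquadratic field or $D_KD_{K_1}D_{K_2}$ of three quadratic subfields --- obtaining such a normal form uses the discriminant relations of Proposition \ref{prop22a}(1) together with a linear change of variables (over $\F_2$) among the quadratic subfields of $L$, which shows that the relevant tuple of quadratic subfields ranges over \emph{all} independent tuples of quadratic fields rather than a special subfamily.

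The second step is to translate the count. By the correspondence between tori and tuples of abelian fields underlying the classification of Section \ref{Sec3} (cf.\ \cite[Section 2]{Lee21}), $N_3^{\tor}(X;H)$ equals, up to an explicit positive constant, the number of tuples $(F_1,\dots,F_k)$ of pairwise linearly disjoint fields of the prescribed types with $\prod_j \Delta_j^{e_j}\leq X$; the tuples that fail to be linearly disjoint form a degenerate locus whose contribution is of strictly smaller order (it is $O(X^{1/a(H)-\delta})$, or $O(X^{1/a(H)}(\log X)^{b(H)-2})$, depending on the case) and hence negligible. Writing $\Delta_j^{e_j}\leq Y$ as $\Delta_j\leq Y^{1/e_j}$ rescales the known distribution of $\Delta_j$, and the product distribution $\#\{(\Delta_1,\dots,\Delta_k):\prod_j\Delta_j^{e_j}\leq X\}$ is then evaluated by iterating Proposition \ref{prop21c}. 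In the many cases where $C(T)$ is a single power of a single $\Delta$ one appeals directly to Proposition \ref{prop21a}(1) (for $D_F$ of a quadratic, cyclic cubic, cyclic quartic or biquadratic $F$) or to Proposition \ref{prop21b}; in the remaining cases Proposition \ref{prop21c} is applied once or twice.

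The third step is bookkeeping: for each $H$ one checks that the exponent and power of $\log X$ produced by the computation are exactly $1/a(H)$ and $b(H)-1$, with $a(H)$ and $b(H)$ the integers defined in Conjecture \ref{conj1b} and recorded in Table \ref{table1}, and that the leading constant is positive, which gives Conjecture \ref{conj1b} for that $H$. I expect the main obstacle to be purely organizational --- there are on the order of thirty cases --- with the one nontrivial ingredient being the change-of-variables argument in the $C_2^2$- and $C_2^3$-cases needed to see that the relevant subfields sweep out all independent tuples (so that the product-distribution input of Proposition \ref{prop21c}, or Proposition \ref{prop21a}(1), applies without a thin-set restriction). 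Finally, $H_{6,c}$ and $H_{6,d}$ are excluded precisely because for them $C(T)=D_{L_6}/D_{L_3}$ for a cyclic sextic field $L_6$: this is not among the distributions of Propositions \ref{prop21a}--\ref{prop21b}, and it does not factor as a product of such distributions over linearly disjoint subfields --- one has $D_{L_6}/D_{L_3}=D_{L_2}\cdot\bigl(D_{L_6}/(D_{L_2}D_{L_3})\bigr)$, and both factors depend on the quadratic subfield $L_2$, so Proposition \ref{prop21c} does not apply --- so those two cases are instead treated by a direct analysis of the associated Dirichlet series in Proposition \ref{prop41c}.
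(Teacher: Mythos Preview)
Your proposal is correct and follows essentially the same approach as the paper: case-by-case reduction of $C(T)$ to a product of invariants whose individual distributions are given by Proposition \ref{prop21a}(1) or Proposition \ref{prop21b}(1)(2), followed by an application of the product-distribution lemma (Proposition \ref{prop21c}), with the non-disjoint degenerate tuples removed as lower-order terms. The paper organizes the cases into three buckets --- those handled by Proposition \ref{prop21a}(1) alone, those needing Proposition \ref{prop21a}(1) together with Proposition \ref{prop21c}, and those needing Proposition \ref{prop21b}(1)(2) with Proposition \ref{prop21c} --- and works out $H_{4,e}$ and $H_{8,a}$ explicitly; your description is a more verbose rendering of the same argument.
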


\begin{proof}
By Proposition \ref{prop21a}(1), the conjecture is true if $\left | H \right | \in \left \{ 2, 3 \right \}$ or
$$
H = H_{4, \, x} \,\, (x \in \left \{ b,d,f,h,l,n \right \}).
$$
By Propositions \ref{prop21a}(1) and \ref{prop21c}, the conjecture is true if $H$ is one of
$$
H_{4, \, x} \,\, (x \in \left \{ e,g,i,j,k,m,o \right \}), \, 
H_{6, b} \text{ and }
H_{8, \, x} \,\, (x \in \left \{ c,d,e,f \right \}).
$$
For example, we have
\begin{equation*} 
\begin{split}
N_3^{\tor}(X; H_{4, e})
& = \# \left \{ (L_1, L_3) \in \NF_2 \times \NF_2 : L_1 \neq L_3 \text{ and } D_1D_3^2 \leq X \right \} \\
& = N_{2,2}^{L_1, L_3}(X; D_1D_3^2) - N_2^{L_1}(X; D_1^3) \\
& \sim CX - O(X^{\frac{1}{3}}) \,\, (\because \text{Proposition } \ref{prop21c})\\
& \sim CX.
\end{split}
\end{equation*}
By Propositions \ref{prop21b}(1)(2) and \ref{prop21c}, the conjecture is true if $H$ is one of
$$
H_{4, a}, \, H_{4, c}, \, H_{6, a}, \, 
H_{8, a}, \, H_{8, b} \text{ and } H_{12, a}.
$$
For example, we have
\begin{equation*} 
\begin{split}
N_3^{\tor}(X; H_{8, a})
& = \# \left \{ (L_2, L_4') \in \NF_2 \times \NF_4(C_4) : L_2 \neq L_2' \text{ and } D_2 \frac{D_4'}{D_2'} \leq X \right \} \\
& = N_{2,4}^{L_2, L_4'}(X; C_4; D_2 \frac{D_4'}{D_2'}) - N_4(X; C_4) \\
& =: A_1(X) - A_2(X)
\end{split}
\end{equation*}
where $L_2'$ is the unique quadratic subfield of $L_4'$. Since $N_2(X) \sim CX$ and
$$
N_4^{L_4'}(X; C_4; \frac{D_4'}{D_2'}) \sim CX^{\frac{1}{2}} \log X
$$
by Proposition \ref{prop21b}(1), we have $A_1(X) \sim CX$ by Proposition \ref{prop21c}. Proposition \ref{prop21a}(1) implies that $A_2(X) \sim CX^{\frac{1}{2}}$ so we have $N_3^{\tor}(X; H_{8, a}) \sim CX$.
\end{proof}

Now let $L_6$ be a cyclic sextic field with the cubic subfield $L_3$ and the quadratic subfield $L_2$. Conjecture \ref{conj1b} for the remaining cases ($H_{6,c}$ and $H_{6, d}$) are equivalent to the asymptotic formula
\begin{equation} \label{eq41a}
N_6(X; C_6,\frac{D_6}{D_3}) \sim C X^{\frac{1}{2}}.
\end{equation}
In order to prove this, we need to use analytic methods. We give an explicit formula for the Dirichlet series of $\displaystyle \frac{D_6}{D_3}$ and apply Delange's Tauberian theorem (Proposition \ref{prop23a}) to this. 

\begin{lemma} \label{lem41b}
\begin{equation} \label{eq41b}
\sum_{L_6 \in \NF_6(C_6)} \left ( \frac{D_6}{D_3} \right )^{-s}
= \frac{h(s)g_1(s) - g_2(3s) - g_3(s) + 1}{2}
\end{equation}
where
\begin{equation*}
\begin{split}
h(s) &:= \left ( 1 + \frac{1}{2^{6s}} + \frac{2}{2^{9s}} \right ) 
\left ( 1 + \frac{1}{3^{3s}} + \frac{2}{3^{4s}} + \frac{2}{3^{5s}} \right ), \\ 
g_1(s) &:= \prod_{p \equiv 1 \text{ (mod 6)}}\left ( 1 + \frac{2}{p^{2s}} + \frac{3}{p^{3s}} \right ) 
\prod_{p \equiv -1 \text{ (mod 6)}} \left ( 1+\frac{1}{p^{3s}} \right ), \\  
g_2(s) &:= \left ( 1 + \frac{1}{2^{2s}} + \frac{2}{2^{3s}} \right ) 
\prod_{p \text{ odd}} \left ( 1 + \frac{1}{p^{s}} \right ), \\  
g_3(s) &:= \left ( 1 + \frac{2}{3^{4s}} \right )
\prod_{p \equiv 1 \text{ (mod 6)}} \left ( 1 + \frac{2}{p^{2s}} \right ).
\end{split}
\end{equation*}
\end{lemma}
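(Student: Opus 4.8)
The plan is to parametrize cyclic sextic fields by pairs of Dirichlet characters, express $D_6/D_3$ through conductors, and then recognize the resulting Dirichlet series as an Euler product. Every $L_6 \in \NF_6(C_6)$ is the compositum of its quadratic subfield $L_2$ and its cyclic cubic subfield $L_3$, which are linearly disjoint, and conversely the compositum of any quadratic field with any cyclic cubic field is cyclic of degree $6$. Hence $L_6$ corresponds bijectively to a pair $(\chi_2, \{\chi_3, \overline{\chi_3}\})$ consisting of a primitive quadratic Dirichlet character $\chi_2$ (cutting out $L_2$) and a conjugate pair of primitive order-$3$ characters $\chi_3, \overline{\chi_3} = \chi_3^2$ (cutting out $L_3$). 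The six characters of $\Gal(L_6/\Q)$ are $1, \chi_3, \chi_3^2, \chi_2, \chi_2\chi_3, \chi_2\chi_3^2$, so the conductor--discriminant formula gives
\[
\frac{D_6}{D_3} = \mathfrak{f}(\chi_2)\,\mathfrak{f}(\chi_2\chi_3)\,\mathfrak{f}(\chi_2\chi_3^2),
\]
where $\mathfrak{f}(\cdot)$ denotes the conductor of a Dirichlet character; since $\chi_2\chi_3^2 = \overline{\chi_2\chi_3}$, the last two conductors coincide.

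The key step is to first sum $[\mathfrak{f}(\chi_2)\mathfrak{f}(\chi_2\chi_3)\mathfrak{f}(\chi_2\chi_3^2)]^{-s}$ over \emph{all} ordered pairs $(\chi_2, \chi_3)$, where $\chi_2$ is trivial or primitive quadratic and $\chi_3$ is trivial or primitive of order $3$. Because a primitive quadratic (resp. order-$3$) character factors uniquely as a product of primitive local characters over the primes dividing its conductor, and conductors of products are multiplicative, this total sum is an Euler product $\prod_p L_p(s)$, where $L_p(s)$ runs over the admissible local pairs $(\alpha,\beta)$ at $p$. I would then compute $L_p(s)$ in four cases: for $p \equiv 1 \pmod 6$ there are two choices of $\alpha$ and three of $\beta$, all relevant products being at most tamely ramified, giving $L_p(s) = 1 + 2p^{-2s} + 3p^{-3s}$; for $p \equiv -1 \pmod 6$ the cubic component is forced trivial and $L_p(s) = 1 + p^{-3s}$; for $p = 2$ the cubic component is again trivial and $\alpha$ runs over $1$ together with the ramified quadratic characters of conductors $4, 8, 8$, giving $L_2(s) = 1 + 2^{-6s} + 2\cdot 2^{-9s}$; and for $p = 3$ one has $\alpha \in \{1, \chi_{-3}\}$ (conductor exponents $0, 1$) and $\beta \in \{1, \psi, \psi^2\}$ with $\psi$ the primitive cubic character of conductor $9$, the only delicate point being that the product of the tame order-$2$ character with a wild order-$3$ character has conductor exponent $2$, which yields $L_3(s) = 1 + 3^{-3s} + 2\cdot 3^{-4s} + 2\cdot 3^{-5s}$. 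Multiplying these local factors gives $\prod_p L_p(s) = h(s)g_1(s)$, since $h(s) = L_2(s)L_3(s)$.

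Finally I would sort the total sum according to which of $\chi_2, \chi_3$ is trivial. The pairs with both nontrivial contribute exactly $2\sum_{L_6}(D_6/D_3)^{-s}$, the factor $2$ arising because $\chi_3$ and $\overline{\chi_3}$ give the same field and the same value of $D_6/D_3$; the pairs with $\chi_3$ trivial contribute $\sum_{\chi_2 \neq 1}\mathfrak{f}(\chi_2)^{-3s} = g_2(3s) - 1$; the pairs with $\chi_2$ trivial contribute $\sum_{\chi_3 \neq 1}[\mathfrak{f}(\chi_3)\mathfrak{f}(\chi_3^2)]^{-s} = g_3(s) - 1$; and the doubly trivial pair contributes $1$. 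Hence $h(s)g_1(s) = 2\sum_{L_6}(D_6/D_3)^{-s} + g_2(3s) + g_3(s) - 1$, which rearranges to (\ref{eq41b}). I expect the main obstacle to be the case-by-case local bookkeeping in the middle step: getting the multiplicities right, computing the wild conductor exponents at $p = 2$ and $p = 3$ correctly (and, for $p=3$, the conductor of the product of a tame quadratic with a wild cubic character), and verifying that the list of admissible local characters exactly matches the global parametrization so that the Euler product identity is legitimate.
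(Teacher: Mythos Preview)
Your proposal is correct and follows essentially the same strategy as the paper: parametrize $L_6$ by pairs (quadratic field, cyclic cubic field), extend the sum to include the degenerate cases where one factor is trivial so that it becomes an Euler product equal to $h(s)g_1(s)$, and then subtract the degenerate contributions $g_2(3s)-1$, $g_3(s)-1$, and $1$. The only cosmetic difference is in how $D_6/D_3$ is written: the paper uses the conductor--discriminant formula together with $\cond(L_6)=\lcm(D_2,D_3^{1/2})$ to obtain the compact expression $D_6/D_3=\lcm(D_2^3,D_2D_3)$ and then reads off the local factors from that, whereas you work directly with $\mathfrak f(\chi_2)\mathfrak f(\chi_2\chi_3)\mathfrak f(\chi_2\chi_3^2)$ and compute each $L_p(s)$ by enumerating local characters. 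The two formulations are equivalent prime by prime, and your explicit local analysis (in particular the conductor calculation at $p=3$ for the product of the tame quadratic with the wild cubic character) supplies exactly the verification that the paper leaves implicit when it asserts the product equals $h(s)g_1(s)$.
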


\begin{proof}
Denote the conductor of an abelian number field $M$ by $\cond (M)$. By the conductor-discriminant formula, we have $\displaystyle \cond(L_6) = \left ( \frac{D_6}{D_2D_3} \right )^{\frac{1}{2}}$. We also have 
$$
\cond(L_6) = \lcm(\cond(L_2), \cond(L_3)) = \lcm(D_2, D_3^{\frac{1}{2}})
$$ 
(\cite[(4)]{Mak93}) so
$$
\frac{D_6}{D_3} = \lcm(D_2^3, D_2D_3).
$$
The Dirichlet series of quadratic fields and cyclic cubic fields are given by
$$
\sum_{L_2 \in \NF_2} \frac{1}{D_2^s} = g_2(s) - 1
$$
(\cite[Section 2]{CDO06}) and
$$
\sum_{L_3 \in \NF_3(C_3)} \frac{1}{D_3^s} = \frac{g_3(s)-1}{2} 
$$
(\cite[Section 3]{CDO06}). Let $S_1$ and $S_2$ be multisets of positive integers defined by
\begin{equation*} 
\begin{split}
S_1 & := \left \{ D_2 : L_2 \in \NF_2 \right \} \cup \left \{ 1 \right \} \\
S_2 & := \left \{ D_3 : L_3 \in \NF_3(C_3) \right \} \cup \left \{ D_3 : L_3 \in \NF_3(C_3) \right \} \cup \left \{ 1 \right \}
\end{split}
\end{equation*}
(union as multisets). Then we have $\displaystyle \sum_{n_1 \in S_1} \frac{1}{n_1^s} = g_2(s)$ and $\displaystyle \sum_{n_2 \in S_2} \frac{1}{n_2^s} = g_3(s)$ so
\begin{equation*}
\begin{split}
    \sum_{L_6 \in \NF_6(C_6)} \left ( \frac{D_6}{D_3} \right )^{-s}
& = \frac{1}{2} \sum_{\substack{(n_1, n_2) \in S_1 \times S_2 \\ n_1 \neq 1, \, n_2 \neq 1}} \frac{1}{\lcm(n_1^3, n_1n_2)^s} \\
& = \frac{1}{2} \sum_{\substack{(n_1, n_2) \in S_1 \times S_2}} \frac{1}{\lcm(n_1^3, n_1n_2)^s}
- \frac{1}{2} \sum_{n_1 \in S_1} \frac{1}{n_1^{3s}}
- \frac{1}{2} \sum_{n_2 \in S_2} \frac{1}{n_2^{s}}
+ \frac{1}{2} \\
& = \frac{h(s)g_1(s)}{2} - \frac{g_2(3s)}{2} - \frac{g_3(s)}{2} + \frac{1}{2}. \qedhere
\end{split}
\end{equation*}
\end{proof}

\begin{proposition} \label{prop41c}
Conjecture \ref{conj1b} holds for $H=H_{6, c}$ and $H=H_{6, d}$. 
\end{proposition}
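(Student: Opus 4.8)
The plan is to establish the equivalent asymptotic $(\ref{eq41a})$ by applying Delange's Tauberian theorem (Proposition~\ref{prop23a}) to the Dirichlet series of Lemma~\ref{lem41b}. First I would record the reduction: by Table~\ref{table1} we have $a(H_{6,c})=a(H_{6,d})=2$ and $b(H_{6,c})=b(H_{6,d})=1$, and by the classification of Section~\ref{Sec3} each cyclic sextic field $L_6$ gives exactly one torus with $G_T$ conjugate to $H_{6,c}$ and one with $G_T$ conjugate to $H_{6,d}$, both of Artin conductor $D_6/D_3$; hence $N_3^{\tor}(X;H_{6,c})=N_3^{\tor}(X;H_{6,d})$ equals the left-hand side of $(\ref{eq41a})$, so it suffices to prove $(\ref{eq41a})$.

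Next I would locate the rightmost singularity of $f(s):=\sum_{L_6\in\NF_6(C_6)}(D_6/D_3)^{-s}$. Writing $f(s)=\tfrac12\bigl(h(s)g_1(s)-g_2(3s)-g_3(s)+1\bigr)$ as in Lemma~\ref{lem41b}: the finite product $h(s)$ is entire; since $\prod_{p\text{ odd}}(1+p^{-3s})=\zeta(3s)/\bigl((1+2^{-3s})\zeta(6s)\bigr)$, the term $g_2(3s)$ is holomorphic on $\re(s)\ge\tfrac12$ (its only pole being the simple one at $s=\tfrac13$); each of $g_1(s),g_3(s)$ converges for $\re(s)>\tfrac12$, so $f$ (a Dirichlet series with nonnegative coefficients) does too; and $g_1(s)$ and $g_3(s)$ both carry the common Euler product $Q(s):=\prod_{p\equiv1\,(6)}(1+2p^{-2s})$. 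I would treat $Q(s)$ as in the proof of Lemma~\ref{lem23b}: comparing its Euler factors with those of $\zeta_{\Q(\sqrt{-3})}(2s)=\zeta(2s)L(2s,\chi_{-3})$ (with $\chi_{-3}$ the quadratic character modulo $3$) gives $Q(s)=\zeta(2s)L(2s,\chi_{-3})\cdot(1-3^{-2s})\prod_{p\equiv2\,(3)}(1-p^{-4s})\prod_{p\equiv1\,(6)}(1-3p^{-4s}+2p^{-6s})$, in which the product of the last three factors is holomorphic and non-vanishing for $\re(s)>\tfrac14$. Since $g_1(s)=E_1(s)Q(s)$ with $E_1(s):=\prod_{p\equiv-1\,(6)}(1+p^{-3s})\prod_{p\equiv1\,(6)}\bigl(1+\tfrac{3p^{-3s}}{1+2p^{-2s}}\bigr)$ holomorphic and non-vanishing on $\re(s)>\tfrac13$, and $g_3(s)=(1+2\cdot3^{-4s})Q(s)$, the poles of $g_1$ and of $g_3$ at $s=\tfrac12$ are both simple and come only from $\zeta(2s)$. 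Setting $\widetilde Q(s):=(s-\tfrac12)Q(s)$, which is holomorphic on $\re(s)>\tfrac14$, one obtains $f(s)=\dfrac{\widetilde Q(s)\bigl(h(s)E_1(s)-1-2\cdot3^{-4s}\bigr)}{2(s-\tfrac12)}+\dfrac{1-g_2(3s)}{2}$, exactly the shape required by Proposition~\ref{prop23a} with $a=\tfrac12$ and $w=1$.

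It remains to check that the numerator does not vanish at $s=\tfrac12$. Since $(s-\tfrac12)\zeta(2s)\to\tfrac12$ as $s\to\tfrac12$ and $L(1,\chi_{-3})>0$, the value $\widetilde Q(\tfrac12)$ is a product of positive real numbers, hence positive; and crudely $h(\tfrac12)>1+2\cdot3^{-2}=\tfrac{11}{9}$ (already from the second factor of $h$) while $E_1(\tfrac12)>1$, so $h(\tfrac12)E_1(\tfrac12)-1-2\cdot3^{-2}>0$. Thus the leading coefficient $g(\tfrac12)=\tfrac12\widetilde Q(\tfrac12)\bigl(h(\tfrac12)E_1(\tfrac12)-\tfrac{11}{9}\bigr)$ in Proposition~\ref{prop23a} is positive, and the theorem yields $(\ref{eq41a})$ with $C=2g(\tfrac12)>0$; by the above values of $a(H)$ and $b(H)$ this is precisely Conjecture~\ref{conj1b} for $H_{6,c}$ and $H_{6,d}$. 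I expect the meromorphic continuation of $Q(s)$ together with the verification that the $g_1$ and $g_3$ contributions do not cancel at $s=\tfrac12$ to be the main point; a cleaner alternative to the explicit inequality for the latter is to note that $\#\{L_6\in\NF_6(C_6):D_6/D_3\le X\}\gg X^{1/3}$ --- fix the cubic subfield to be the cubic subfield of $\Q(\zeta_7)$ and let the quadratic subfield range over fields of discriminant prime to $7$, so that $D_6/D_3=49\,D_2^3$ --- which by Landau's theorem forces the abscissa of convergence of $f$ to equal $\tfrac12$ and hence the pole there to be genuine.
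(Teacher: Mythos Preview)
Your proof is correct and follows essentially the same route as the paper: reduce to the asymptotic \eqref{eq41a}, analyze the Dirichlet series of Lemma~\ref{lem41b}, show it has a simple pole at $s=\tfrac12$ with nonzero residue, and apply Delange's Tauberian theorem. The only difference is packaging: where the paper invokes Wright's theorem to get $g_3\in\mathcal{M}(\tfrac12,\tfrac13)$ and then checks that the ratio $(h g_1-g_3)/g_3$ is holomorphic near $s=\tfrac12$ with positive value there, you instead extract the common Euler product $Q(s)=\prod_{p\equiv 1\,(6)}(1+2p^{-2s})$ from both $g_1$ and $g_3$ and continue it by hand via $\zeta(2s)L(2s,\chi_{-3})$, then verify $h(\tfrac12)E_1(\tfrac12)>1+2\cdot 3^{-2}$ directly. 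Both arguments establish the same analytic fact; yours is more self-contained, the paper's is shorter by citing \cite{Wri89}.
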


\begin{proof}
By \cite[Theorem 1.1]{Wri89}, $\displaystyle g_2(s) \in \mathcal{M} (1, \frac{1}{2})$ and $\displaystyle g_3(s) \in \mathcal{M} (\frac{1}{2}, \frac{1}{3})$. (See Section \ref{Sub23} for the definition of $\mathcal{M} (\alpha, \beta)$ for $\alpha > \beta > 0$.) Since 
\begin{equation*}
\begin{split}
& \frac{h(s)g_1(s)-g_3(s)}{g_3(s)} \\
= &
\left ( 1 + \frac{1}{2^{6s}} + \frac{2}{2^{9s}} \right ) 
\left ( 1 + \frac{3^{2s}+2}{3^{5s}+2 \cdot 3^s} \right )
\prod_{p \equiv 1 \text{ (mod 6)}} \left ( 1 + \frac{3}{p^{3s}+2p^s} \right )
\prod_{p \equiv -1 \text{ (mod 6)}} \left ( 1+\frac{1}{p^{3s}} \right ) - 1
\end{split}
\end{equation*}
converges absolutely for $\displaystyle \re(s) > \frac{1}{3}$ and 
$$
\lim_{s \rightarrow \frac{1}{2}} \frac{h(s)g_1(s)-g_3(s)}{g_3(s)} > 0, 
$$
we have $\displaystyle h(s)g_1(s)-g_3(s) \in \mathcal{M} (\frac{1}{2}, \frac{1}{3})$. Now Lemma \ref{lem41b} implies that
$$
\sum_{L_6 \in \NF_6(C_6)} \left ( \frac{D_6}{D_3} \right )^{-s} 
\in \mathcal{M} (\frac{1}{2}, \frac{1}{3})
$$
so Proposition \ref{prop23a} finishes the proof.
\end{proof}

\subsection{Lower bound for non-abelian case} \label{Sub42}

There are $39$ conjugacy classes of finite non-abelian subgroups of $\GL_3(\Z)$. By the computation of the numbers $C(T)$ in Section \ref{Sec3} and Table \ref{table1}, it is enough to consider the asymptotics of $N_3^{\tor}(X; H)$ when $H$ is one of the following $16$ subgroups of $\GL_3(\Z)$:
\begin{itemize}
\item $H_{6, e}$, $H_{6, f}$ ($S_3$)
\item $H_{8, x} \,\, (x \in \left \{ g, h, i \right \})$ ($D_4$), $H_{16, a}$ ($D_4 \times C_2$)
\item $H_{12, i}$ ($A_4$), $H_{24, b}$ ($A_4 \times C_2$)
\item $H_{12,x} \,\, (x \in \left \{ b, c, d, f \right \})$ ($D_6$), $H_{24, d}$ ($D_6 \times C_2$)
\item $H_{24, g}$, $H_{24, h}$ ($S_4$), $H_{48, b}$ ($S_4 \times C_2$)
\end{itemize}

\begin{proposition} \label{prop42a}
Conjecture \ref{conj1b} holds for $H= H_{6,f}, \, H_{8, h}, \, H_{8, i}, \, H_{16, a} , \, H_{12, d}$ and $H_{24, h}$. Under the assumption of Malle's conjecture for quartic $A_4$-fields, it also holds for $H=H_{12, i}$. 
\end{proposition}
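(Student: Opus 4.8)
The plan is to treat each of the seven groups by using the classification of Section~\ref{Sec3} to rewrite $N_3^{\tor}(X;H)$ as a positive constant times the number of Galois extensions $L/\Q$ with $\Gal(L/\Q)\cong H$ for which the expression for $C(T)$ in the discriminants of subfields of $L$ recorded there is at most $X$, and then to evaluate that count via the results of Section~\ref{Sub21} together with the product-distribution estimate (Proposition~\ref{prop21c}), comparing the exponents obtained with the values of $a(H)$ and $b(H)$ in Table~\ref{table1}. In every case the torus is recovered from its splitting field $L$ up to a multiplicity depending only on $H$, and the subfields entering $C(T)$ are canonically attached to $L$ with conjugate subfields having equal discriminant, so this reduction is routine.

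Five of the groups have $C(T)$ equal to a single field invariant. For $H_{6,f}$ we have $C(T)=D_{L_3}$ with $L_3$ the cubic subfield of an $S_3$-sextic, so $N_3^{\tor}(X;H_{6,f})$ is a constant times $\#\{L_3\in\NF_3(S_3):D_{L_3}\le X\}\sim CX$ by Proposition~\ref{prop21a}(2), which matches $a=b=1$. For $H_{8,i}$, $C(T)=D_{M_2}$ with $M_2$ a $D_4$-quartic subfield, and for $H_{24,h}$, $C(T)=D_{L_4}$ with $L_4$ a quartic $S_4$-field; both counts are $\sim CX$ by Proposition~\ref{prop21a}(2), matching $a=b=1$. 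For $H_{8,h}$, $C(T)=D_{M_2}/D_{K_2}$ with $M_2$ a $D_4$-quartic and $K_2$ its quadratic subfield, so the count is $\sim CX\log X$ by Proposition~\ref{prop21b}(3), matching $a=1$, $b=2$. Finally, for $H_{12,i}$ the relation (\ref{eq36e}) gives $C(T)=D_{L_6}/D_{L_3}=D_{L_4}$ with $L_4$ a quartic $A_4$-field, so $N_3^{\tor}(X;H_{12,i})$ is a constant times $\#\{L_4\in\NF_4(A_4):D_{L_4}\le X\}$; under Malle's conjecture for quartic $A_4$-fields this is $\sim CX^{1/2}\log X$, matching $a(H_{12,i})=b(H_{12,i})=2$.

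The two remaining groups have $C(T)$ equal to a product of two invariants attached to subfields that are forced to be linearly disjoint. For $H_{12,d}$, the identity (\ref{eq36a}) gives $C(T)=D_{L_4}D_{L_6}$ with $L_4$ ranging over non-Galois cubics and $L_6$ over quadratic fields, and $L$ being a $D_6\cong S_3\times C_2$ field forces $L_6$ to differ from the quadratic resolvent of $L_4$. Hence $N_3^{\tor}(X;H_{12,d})$ equals a positive constant times
\[
\#\{(L_4,L_6)\in\NF_3(S_3)\times\NF_2:D_{L_4}D_{L_6}\le X\}-\#\{L_4\in\NF_3(S_3):D_{L_4}D'(L_4)\le X\},
\]
where $D'(L_4)\ge 3$ is the discriminant of the quadratic resolvent of $L_4$. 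The families $\{D_{L_4}\}$ and $\{D_{L_6}\}$ are each counted by $\sim CY$ (Proposition~\ref{prop21a}(2)), so Proposition~\ref{prop21c}(1) gives $\sim CX\log X$ for the first term, while $D'(L_4)\ge 3$ makes the second term $O(X)$; thus $N_3^{\tor}(X;H_{12,d})\sim CX\log X$, matching $a=1$, $b=2$. For $H_{16,a}$, $C(T)=D_{L_2''}\cdot D_{L_4'}/D_{L_2'}$ with $L_4'$ a $D_4$-quartic, $L_2'$ its quadratic subfield, $L_2''$ a quadratic field, and $L$ being a $D_4\times C_2$ field forces $L_2''$ to be linearly disjoint from the Galois closure $(L_4')^c$. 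Dropping the disjointness, the family $\{D_{L_4'}/D_{L_2'}\}$ is counted by $\sim CY\log Y$ (Proposition~\ref{prop21b}(3)) and $\{D_{L_2''}\}$ by $\sim CY$, so Proposition~\ref{prop21c}(1) gives $\sim CX(\log X)^2$. The pairs removed are those with $L_2''$ one of the three quadratic subfields of $(L_4')^c$: if $L_2''=L_2'$ the invariant becomes $D_{L_4'}$, counted by $O(X)$ (Proposition~\ref{prop21a}(2)); if $L_2''$ is the quadratic subfield of the other quartic subfield of $(L_4')^c$, the relation (\ref{eq35i}) rewrites the invariant as the discriminant of that quartic, again $O(X)$; and for the third, $D_{L_2''}\ge 3$ forces $D_{L_4'}/D_{L_2'}\le X/3$, giving $O(X\log X)$ by Proposition~\ref{prop21b}(3). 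All of these are $o(X(\log X)^2)$, so $N_3^{\tor}(X;H_{16,a})\sim CX(\log X)^2$, matching $a=1$, $b=3$.

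The part requiring genuine care is the bookkeeping of the torus-to-field correspondence, together with, in the two product cases, the verification that the degenerate configurations --- where the auxiliary quadratic field collapses into a Galois closure --- are of lower order; as indicated, the latter is straightforward because each degenerate family is governed by a single-field invariant whose count carries a strictly smaller power of $\log X$ (or a smaller power of $X$). Everything else is a direct application of the cited number-field counting results and of Proposition~\ref{prop21c}.
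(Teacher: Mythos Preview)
Your proof is correct and follows essentially the same strategy as the paper: for each $H$ you identify $N_3^{\tor}(X;H)$ with a count of number fields (or pairs thereof) via the $C(T)$ formulas of Section~\ref{Sec3}, then invoke the field-counting results in Propositions~\ref{prop21a}--\ref{prop21c}. You supply somewhat more detail than the paper for $H_{12,d}$ and especially $H_{16,a}$, where your three-case analysis of the degenerate quadratic subfield refines the paper's blanket $O(X\log X)$ bound on the subtracted term, but the conclusion is the same.
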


\begin{proof}
The cases $H_{6,f}$, $H_{8, i}$ and $H_{24, h}$ follow from Proposition \ref{prop21a}(2), the case $H_{8, h}$ follows from Proposition \ref{prop21b}(3) and the case $H_{12, d}$ follows from Propositions \ref{prop21a}(2) and \ref{prop21c}. 
When $G_T=H_{16, a}$, we have $\displaystyle C(T)=D_2'' \frac{D_4'}{D_2'}$ for $L_2'' \in \NF_2$ and $L_4' \in \NF_4(D_4)$ (with the unique quadratic subfield $L_2'$) such that $L_2'' \cap L_4'^c = \Q$. Therefore
\begin{equation*} 
\begin{split}
N_3^{\tor}(X; H_{16, a})
& = \# \left \{ (L_2'', L_4') \in \NF_2 \times \NF_4(D_4) : L_2'' \cap L_4'^c = \Q \text{ and } D_2'' \frac{D_4'}{D_2'} \leq X \right \} \\
& = N_{2,4}^{L_2'', L_4'}(X; D_4; D_2'' \frac{D_4'}{D_2'})  - \# \left \{ (L_2'', L_4') \in \NF_2 \times \NF_4(D_4) : L_2'' \subset L_4'^c \text{ and } D_2'' \frac{D_4'}{D_2'} \leq X \right \} \\
& \sim CX (\log X)^2 - O(X \log X) \\
& \sim CX (\log X)^2
\end{split}
\end{equation*}
by Propositions \ref{prop21b}(3) and \ref{prop21c}. The second statement is trivial because $N_3^{\tor}(X; H_{12, i}) = N_4(X; A_4)$.
\end{proof}

\begin{remark} \label{rmk42b}
Unconditionally, we have
$$
X^{\frac{1}{2}} \ll N_4(X; A_4) \ll_{\varepsilon} X^{\frac{1}{2}+\gamma +\varepsilon}
$$
for $\gamma = 0.2784...$ by \cite[Theorem 3]{Bai80} and \cite[Theorem 1.4]{BSTTTZ20}. 
\end{remark}

Let $\WW$ be the set of the $9$ remaining cases, i.e.
$$
\WW := \left \{ H_{6,e}, \, H_{8,g}, \, H_{12, b}, \, H_{12, c}, \, H_{12, f}, \, H_{24, b}, \, H_{24,d}, \, H_{24, g}, \, H_{48, b} \right \}.
$$
We provide upper and lower bounds of $N_3^{\tor}(X; H)$ for each $H$ in $\WW$, which are summarized in Table \ref{table2} below. The upper bounds for $H = H_{12, b}, \, H_{12, c}, \, H_{24, d}$ are under the assumption of Conjecture \ref{conj1d} and the upper bound for $H_{24, g}$ is under the assumption of Malle's conjecture for the Galois group $12T8$. The other upper and lower bounds are unconditional. 

\begin{table}[ht]
\centering
\small
\begin{tabular}{ |c|c|c|c| } 
\hline
$H$ & Malle & Lower & Upper  \\ \hline
 $H_{6, e}$, $H_{6, g}$, $H_{6, i}$ & $X^{\frac{1}{2}} \log X$ & $X^{\frac{1}{2}} $ & $X^{\frac{1}{2}} (\log X)^2$ \\ \hline
 $H_{8, g}$, $H_{8, k}$ & $X^{\frac{1}{2}} (\log X)^3$ & $X^{\frac{1}{2}} (\log X)^2$ & $X^{\frac{3}{4} - \delta}$ ($\delta > 0$) \\ \hline
 $H_{12, b}$ & $X^{\frac{1}{2}} (\log X)^4$ & $X^{\frac{1}{2}} \log X$ & $X^{\frac{1}{2}} (\log X)^{6+\varepsilon}$ \\ \hline
 $H_{12, c}$ & $X \log X$ & $X$ & $X (\log X)^{1+\varepsilon}$ \\ \hline
 $H_{12, f}$, $H_{12, g}$, $H_{12, h}$ & $X$ & $X$ & $X$ \\ \hline
 $H_{24, a}$, $H_{24, b}$, $H_{24, c}$ & $X$ & $X$ & $X (\log X)^3 \log \log X$ \\ \hline
 $H_{24, d}$ & $X (\log X)^2$ & $X \log X$ & $X (\log X)^{2+\varepsilon}$ \\ \hline
 $H_{24, e}$, $H_{24, g}$, $H_{24, i}$ & $X^{\frac{1}{2}} (\log X)^3$ & $X^{\frac{1}{2}}$  & $X^{\frac{9}{10}}$ \\ \hline
 $H_{48, a}$, $H_{48, b}$, $H_{48, c}$ & $X \log X$ & $X$ & $X (\log X)^4 \log \log X$ \\ \hline
\end{tabular}
\caption{Upper and lower bounds of $N_3^{\tor}(X;H)$}
\label{table2}
\end{table}

We concentrate on the lower bounds in this section. The result is satisfactory, in the sense that the asymptotic inequality $X^{\frac{1}{a(H)}} \ll N_3^{\tor}(X; H)$ holds for every finite subgroup $H \neq 1$ of $\GL_3(\Z)$. By Propositions \ref{prop41a}, \ref{prop41c}, \ref{prop42a} and Remark \ref{rmk42b}, it is enough to prove this for $H \in \WW$. The case $H=H_{24, b}$ is considered separately because its proof relies on the work of Cohen and Thorne \cite{CT16}.

For a cyclic cubic field $k$, denote by $\mathcal{F}(k)$ the set of quartic $A_4$-fields whose cubic resolvent is $k$. For every $K \in \mathcal{F}(k)$, there is $f(K) \in \Sqf_2$ which satisfies $D_K = D_k f(K)^2$. For every $f \in \Sqf_2$, denote
$$
\mathcal{F}(k, f^2) := \left \{ K \in \mathcal{F}(k) : D_K = D_k f^2 \right \}.
$$

\begin{proposition} \label{prop42c}
$N_3^{\tor}(X; H_{24, b}) \gg X$.
\end{proposition}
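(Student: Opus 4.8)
The plan is to produce $\gg X$ pairwise non-isomorphic tori of type \ref{tor24b} with conductor at most $X$, all sharing one fixed cyclic cubic resolvent, by varying a quadratic extension of that cubic field. Fix a cyclic cubic field $k$ and write $\Gal(k/\Q)=\langle\sigma\rangle\cong C_3$. Given $\alpha\in k^{\times}$ whose classes $\alpha,\sigma\alpha,\sigma^2\alpha$ are $\F_2$-linearly independent in $k^{\times}/(k^{\times})^2$, the Galois closure $L$ of $k(\sqrt{\alpha})/\Q$ has $\Gal(L/k)\cong C_2^3$ with $\Gal(k/\Q)$ permuting the three coordinates cyclically; since $\gcd(2,3)=1$ this extension splits, so $\Gal(L/\Q)\cong C_2^3\rtimes C_3\cong A_4\times C_2$, the fixed field of $C_2^3$ being exactly $k$, which is the cubic resolvent of the quartic $A_4$-subfield of $L$. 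The only sextic subfields of $L$ containing $k$ whose Galois closure is all of $L$ are the three conjugates $k(\sqrt{\alpha}),k(\sqrt{\sigma\alpha}),k(\sqrt{\sigma^2\alpha})$, so $F:=k(\sqrt{\alpha})$ is (a conjugate of) the field $L_6$ attached to $L$ in the classification of type \ref{tor24b}, and $L_3=k$. Hence the torus $T$ of type \ref{tor24b} with splitting field $L$ satisfies, by \ref{tor24b}, Lemma \ref{lem3b} and the conductor--discriminant formula,
\[
C(T)=\frac{D_6}{D_3}=\frac{D_F}{D_k}=D_k\cdot\nm_{k/\Q}(\mathfrak{d}_{F/k}).
\]
Conversely, every $L$ with $\Gal(L/\Q)\cong A_4\times C_2$ and cubic resolvent $k$ arises from exactly these three classes and carries a torus of type \ref{tor24b}, and distinct $L$ give non-isomorphic tori, so
\[
N_3^{\tor}(X;H_{24,b})\ \gg\ \#\bigl\{\,F/k\ \text{quadratic}\ :\ \nm_{k/\Q}(\mathfrak{d}_{F/k})\le X/D_k,\ \{\alpha,\sigma\alpha,\sigma^2\alpha\}\ \text{independent}\,\bigr\}.
\]

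Next I would count all quadratic extensions of $k$, ignoring the independence condition. By Kummer theory these correspond to the nontrivial classes in $k^{\times}/(k^{\times})^2$, and a standard Euler product computation with the conductor--discriminant formula gives
\[
\sum_{F/k\ \text{quadratic}}\nm_{k/\Q}(\mathfrak{d}_{F/k})^{-s}=\frac{\zeta_k(s)}{\zeta_k(2s)}\,A_k(s)-1,
\]
where $A_k(s)$ is a finite Euler product over the primes above $2$, holomorphic and non-vanishing for $\re(s)>\frac{1}{2}$. Since $\zeta_k$ has a simple pole at $s=1$ and is otherwise holomorphic there, the left-hand side lies in $\mathcal{M}(1,\frac{1}{2})$, so Proposition \ref{prop23a} gives $\#\{F/k\ \text{quadratic}:\nm_{k/\Q}(\mathfrak{d}_{F/k})\le Y\}\sim c_kY$ for some $c_k>0$.

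It then remains to show that the quadratic extensions $F=k(\sqrt{\alpha})$ violating the independence condition are negligible. A $\Gal(k/\Q)$-stable dependence among $\alpha,\sigma\alpha,\sigma^2\alpha$ with $\alpha\notin(k^{\times})^2$ leaves only two possibilities. Either the three classes coincide, which happens precisely when $F/\Q$ is Galois, hence cyclic of degree $6$, so $F=kE'$ for a quadratic field $E'$ with $\nm_{k/\Q}(\mathfrak{d}_{F/k})\asymp_k D_{E'}^{3}$; there are $O_k(Y^{1/3})$ such fields with $\nm_{k/\Q}(\mathfrak{d}_{F/k})\le Y$. Or $\nm_{k/\Q}(\alpha)\in(\Q^{\times})^2$, in which case the Galois closure of $F/\Q$ collapses to an $A_4$-field $M$ with cubic resolvent $k$, $F$ is a sextic subfield of $M$, and the formula (\ref{eq22c2}) applied to $M/\Q$ gives $D_F=D_kD_K=D_k^2f(K)^2$ with $K\in\mathcal{F}(k)$ the quartic subfield of $M$; thus $\nm_{k/\Q}(\mathfrak{d}_{F/k})=f(K)^2$ and the number of such $F$ with $\nm_{k/\Q}(\mathfrak{d}_{F/k})\le Y$ equals $\#\{K\in\mathcal{F}(k):f(K)\le Y^{1/2}\}$, which is $O_{k,\varepsilon}(Y^{1/2+\varepsilon})$ by the work of Cohen and Thorne \cite{CT16} on the Dirichlet series of quartic $A_4$-fields with prescribed cubic resolvent (and is $o(Y)$ even unconditionally by Remark \ref{rmk42b}, since $D_K=D_kf(K)^2$). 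Combining the last two paragraphs, the number of admissible $F$ with $\nm_{k/\Q}(\mathfrak{d}_{F/k})\le X/D_k$ is $\sim c_kX/D_k$, and since the passage from these fields to tori is at most $3$-to-$1$, we conclude $N_3^{\tor}(X;H_{24,b})\gg X$.

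The step I expect to be the main obstacle is the second degenerate case above: bounding the locus where $\Gal(L/\Q)$ drops from $A_4\times C_2$ to $A_4$ requires a power-saving count of quartic $A_4$-fields with a fixed cubic resolvent, which is exactly where the input of Cohen and Thorne \cite{CT16} is needed. The remaining ingredients — the group-theoretic identification of $F=k(\sqrt{\alpha})$ with the field $L_6$, the conductor identity $C(T)=D_F/D_k$, and the asymptotic for quadratic extensions of a fixed cubic field — are comparatively routine.
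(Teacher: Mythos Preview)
Your argument is correct and takes a genuinely different route from the paper's. The paper fixes the specific cyclic cubic $E=\Q(\zeta_7)^+$ and parametrizes $A_4\times C_2$-fields via pairs $(L_4,L_2)$ with $L_4\in\mathcal{F}(E)$ and $L_2=\Q(\sqrt{f})$ chosen so that $D_{L_4}=D_Ef^2$; it then bounds $\frac{D_8}{D_4D_2}\le cf$ by a local ramification computation and counts the $L_4$'s using Cohen--Thorne's explicit Dirichlet series together with the analytic continuation Lemma~\ref{lem23b}. You instead parametrize directly by quadratic extensions $F/k$ of a fixed cyclic cubic $k$, identify $F$ with the sextic $L_6$ so that $C(T)=D_F/D_k$, obtain the main term $\sim c_kY$ from the classical count $\zeta_k(s)/\zeta_k(2s)$, and subtract off the two degenerate Galois-closure types.

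What this buys you: your approach is more elementary. Cohen--Thorne enters only to bound the $A_4$-degenerate locus, which is an $o(Y)$ error term, and as you note even this can be replaced by the crude unconditional bound $N_4(X;A_4)\ll X^{0.78}$ of Remark~\ref{rmk42b}; in particular Lemma~\ref{lem23b} becomes unnecessary. The paper's approach, by contrast, uses Cohen--Thorne to produce the \emph{main} term, which forces the choice of a specific $E$ with class number one and the auxiliary $L$-function computation. On the other hand, the paper's $(L_4,L_2)$ viewpoint aligns more naturally with the parametrization used in the upper-bound arguments (Proposition~\ref{prop43b}), and makes the dependence on the quartic $A_4$-count explicit rather than hiding it in an error term.

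One minor point of bookkeeping: in the $A_4$-degenerate case the map from sextics $F$ (counted as extensions of $k$) to isomorphism classes $K\in\mathcal{F}(k)$ is $3$-to-$1$ rather than a bijection, but this only affects the constant in an $o(Y)$ term and is harmless.
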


\begin{proof}
Let $E$ be the maximal real subfield of $\Q (\zeta_7)$. Then $E \in \NF_3(C_3)$, $D_{E}=49$ and $h_{E}=1$. A prime $p \neq 7$ splits completely in $E$ if and only if $p$ is a cubic residue modulo $7$, i.e. $p \equiv \pm 1 \,\, ( \text{mod } 7)$. Therefore
    \begin{equation*}
    \frac{1}{3} + \sum_{K \in \mathcal{F}(E)}\frac{1}{f(K)^s} = \frac{1}{3}\left ( 1 + \frac{3}{2^{3s}} \right ) \prod_{p \equiv \pm 1 \,\, (\text{mod } 7) } \left ( 1+\frac{3}{p^s} \right )
    \end{equation*}
    by \cite[Theorem 1.4]{CT16}. Ignoring the even parts, we obtain
    \begin{equation} \label{eq42b}
    \begin{split}
    \sum_{\gcd(f, 14)=1} \frac{\left | \mathcal{F}(E, f^2) \right |}{f^s} 
    & = \sum_{\substack{K \in \mathcal{F}(E) \\ \gcd(f(K), 14)=1}} \frac{1}{f(K)^s} \\
    & = \frac{1}{3} \prod_{p \equiv \pm 1 \,\, (\text{mod } 7) } \left ( 1+\frac{3}{p^s} \right ) - \frac{1}{3} \\
    & \in \mathcal{M}(1, \frac{1}{2})
    \end{split}
    \end{equation}
    by Lemma \ref{lem23b}. 
 
Let $f$ be a positive squarefree integer which is coprime to $14$ and define $L_{2, f} := \Q(\sqrt{f})$ (so $D_{2, f} \in \left \{ f, 4f \right \}$). Let $L_{4, f}$ be an element of $\mathcal{F}(E, f^2)$ and $L_{8, f} := L_{4, f}L_{2, f} \in \NF_8(A_4 \times C_2)$. 
    For any prime $p$ dividing $f$, the splitting type of $p$ in $L_{4, f}$ is $(1^21^2)$ or $(2^2)$ by \cite[Theorem 5.1]{CT16} so $v_p(D_{8, f})=4$ by Proposition \ref{prop22b}. Also $D_{8, f} \mid D_{4, f}^2 D_{2, f}^4$ so $\displaystyle v_7 \left ( \frac{D_{8, f}}{D_{4, f}D_{2, f}} \right ) \leq 2$ and $\displaystyle v_2 \left ( \frac{D_{8, f}}{D_{4, f}D_{2, f}} \right ) \leq 6$. These imply that
    \begin{equation} \label{eq42a}
    \frac{D_{8, f}}{D_{4, f}D_{2, f}} \leq cf
    \end{equation}
    for $c=56^2$. Now we have
    \begin{equation*}
        N_3^{\tor}(X; H_{24, b})
        = N_{4,2}^{L_4, L_2}(X; A_4; \frac{D_8}{D_4D_2})
        \geq \sum_{\substack{f \leq \frac{X}{c} \\ \gcd(f, 14)=1}} \left | \mathcal{F}(E, f^2) \right |
        \sim CX
    \end{equation*}
    by the relation (\ref{eq42b}), the inequality (\ref{eq42a}) and Proposition \ref{prop23a}.
\end{proof}

\begin{lemma} \label{lem42d}
Let $L_2$, $L_4$ and $L_8$ be number fields as in \ref{item3xvi} of Section \ref{Sec3}. Then
$$
v_p(D_8) \leq v_p(D_4^3D_2)
$$
for every odd prime $p$. Here $v_p(m)$ denotes the exponent of $p$ in $m$.
\end{lemma}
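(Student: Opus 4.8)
The plan is to compute the $p$-adic valuations of the discriminants through local Artin conductors. Write $G=\Gal(L/\Q)\cong S_4$, and recall from the $S_4$ case (\ref{item3xvi}) of Section \ref{Sec3} that $L_8=L^{\langle hg\rangle}$, $L_4=L^{\langle hg,\,ghg^{-1}\rangle}$ and $L_2=L^{\langle g^2,\,gh\rangle}$; here $\langle hg\rangle\cong C_3$, the subgroup $\langle hg,\,ghg^{-1}\rangle$ has order $6$ and hence is a point-stabiliser $\cong S_3$, and $\langle g^2,\,gh\rangle=A_4$, and all three contain $\langle hg\rangle$. For $H\le G$ and $M=L^H$, the conductor-discriminant formula identifies $v_p(D_M)$ with the Artin conductor exponent at $p$ of the permutation representation $\rho_H:=\Q[G/H]$, regarded as a representation of the local Galois group; fixing an odd prime $p$, a prime $P$ of $L$ above $p$, and the lower-numbering ramification filtration $G_0\supseteq G_1\supseteq\cdots$ at $P$, this reads
$$
v_p(D_M)=\sum_{i\ge 0}\frac{1}{[G_0:G_i]}\bigl(\dim\rho_H-\dim\rho_H^{G_i}\bigr),
$$
where $\dim\rho_H^{G_i}$ is the number of $G_i$-orbits on $G/H$.

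First I would decompose the three permutation representations into irreducibles of $S_4$. Writing $\mathbf 1,\epsilon,V$ for the trivial, sign, and standard $3$-dimensional irreducibles, one has $\rho_{\langle hg,\,ghg^{-1}\rangle}\cong\mathbf 1\oplus V$ (the natural action on $4$ points) and $\rho_{A_4}\cong\mathbf 1\oplus\epsilon$, while a short character computation — equivalently Frobenius reciprocity, since $\langle\operatorname{Ind}_{C_3}^{S_4}\mathbf 1,\chi\rangle=\dim\chi^{C_3}$ — gives
$$
\rho_{\langle hg\rangle}\cong\mathbf 1\oplus\epsilon\oplus V\oplus(V\otimes\epsilon).
$$
Since the Artin conductor is additive in direct sums and vanishes on $\mathbf 1$, the asserted inequality $v_p(D_8)\le 3\,v_p(D_4)+v_p(D_2)$ becomes exactly
$$
\mathfrak a_p(V\otimes\epsilon)\le 2\,\mathfrak a_p(V),
$$
where $\mathfrak a_p$ denotes the local conductor exponent at $p$.

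To finish I would analyse the inertia group $G_0$, using that $p$ is odd. Then the wild inertia $G_1$ is a $p$-group inside $S_4$, hence trivial or $\cong C_3$ (as $|S_4|=24$); in either case $G_i\subseteq A_4$ for all $i\ge 1$, so $\epsilon$ is trivial on $G_i$ and $(V\otimes\epsilon)^{G_i}=V^{G_i}$ there. Thus the terms with $i\ge 1$ in $\mathfrak a_p(V\otimes\epsilon)$ and $\mathfrak a_p(V)$ cancel and
$$
\mathfrak a_p(V\otimes\epsilon)-\mathfrak a_p(V)=\dim V^{G_0}-\dim(V\otimes\epsilon)^{G_0}.
$$
If $G_0\subseteq A_4$ the right-hand side is $0$ and we are done. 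Otherwise $G_0/G_1$ is cyclic of order prime to $p$ with $G_1$ trivial or $C_3$, which forces $G_0$ to be $C_2$ generated by a transposition, $C_4$, or a point-stabiliser $S_3$; since $\mathfrak a_p(V)\ge\dim V-\dim V^{G_0}=3-\dim V^{G_0}$ (the $i=0$ term alone), it suffices to check $2\dim V^{G_0}-\dim(V\otimes\epsilon)^{G_0}\le 3$ in these three cases, and a direct computation yields the pairs $(\dim V^{G_0},\,\dim(V\otimes\epsilon)^{G_0})=(2,1),\,(0,1),\,(1,0)$ respectively, so the inequality holds — with equality only in the tamely ramified transposition case.

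The main obstacle is bookkeeping rather than any deep input: one must get the decomposition $\rho_{\langle hg\rangle}\cong\mathbf 1\oplus\epsilon\oplus V\oplus(V\otimes\epsilon)$ exactly right, as it drives the whole reduction, and one must correctly enumerate which subgroups of $S_4$ can occur as the inertia group $G_0$ at an odd prime before running the final, entirely elementary, representation-theoretic check. No arithmetic beyond the conductor-discriminant formula is required.
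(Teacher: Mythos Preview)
Your proof is correct and takes a genuinely different route from the paper. The paper argues by contradiction: assuming $v_p(D_8)>v_p(D_4^3D_2)$, it first uses $L_8=L_4L_2$ to bound $v_p(D_8)\le 2v_p(D_4)+4v_p(D_2)$ and, together with parity considerations among $v_p(D_4),v_p(D_3),v_p(D_2)$, forces $v_p(D_2)=v_p(D_3)=v_p(D_4)=1$; it then feeds these values into the discriminant identities of Lemma~\ref{lem3d} and Proposition~\ref{prop22a}(2) (applied to the $S_3$-extension $L/L_4$) to compute $v_p(D_L)$ and derive a contradiction from $v_p(D_L)\ge 4v_p(D_6'')$. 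Your argument instead decomposes the permutation characters $\Q[S_4/C_3]$, $\Q[S_4/S_3]$, $\Q[S_4/A_4]$ into irreducibles, reduces the statement to the single inequality $\mathfrak a_p(V\otimes\epsilon)\le 2\,\mathfrak a_p(V)$ of local Artin conductors, and verifies this by enumerating the possible inertia groups at an odd prime. Your approach is more conceptual and self-contained --- it does not rely on the web of subfield discriminant identities built up in Section~\ref{Sec3} --- and it makes transparent exactly where the hypothesis ``$p$ odd'' enters (it forces the wild inertia into $A_4$, so the twist by $\epsilon$ is invisible beyond the $i=0$ term). The paper's approach, on the other hand, stays within the explicit field-theoretic framework already in place and avoids any representation theory.
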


\begin{proof}
Assume that $v_p(D_8) > v_p(D_4^3D_2)$ for an odd prime $p$.
\begin{itemize}
    \item $L_8=L_4L_2$ implies that $v_p(D_8) \leq 2v_p(D_4)+4v_p(D_2)$ so $v_p(D_2)=1$. We also have
    $$
    3v_p(D_4)+1 < v_p(D_8) \leq 2v_p(D_4)+4
    $$
    so $v_p(D_4) < 3$.
    
    \item The cubic resolvent of $L_4 \in \NF_4(S_4)$ is $L_3 \in \NF_3(S_3)$ and the quadratic resolvent of $L_3$ is $L_2$. Therefore $v_p(D_4)$, $v_p(D_3)$ and $v_p(D_2)$ have the same parity so $v_p(D_3)=v_p(D_4)=1$ and $v_p(D_6)=v_p(D_3D_4)=2$. 
    
    \item Let $c := v_p(D_8)$. Then $v_p(D_6'')=c-1$ and $v_p(D_{12})=c+1$ by Lemma \ref{lem3d}. Since $L/L_4$ is a Galois extension with a Galois group $S_3$, we have $D_LD_4^2=D_{12}^2D_8$ by Proposition \ref{prop22a}(2) so $v_p(D_L)=3c$. 
    
    \item The relation $v_p(D_L) \geq 4v_p(D_6'')$ implies that $c \leq 4$, which contradicts the assumption. This finishes the proof. \qedhere
\end{itemize}
\end{proof}

\begin{theorem} \label{thm42e}
For every $H \in \WW$, the lower bound of $N_3^{\tor}(X; H)$ is given as in Table \ref{table2}. In particular we have
$$
X^{\frac{1}{a(H)}} \ll N_3^{\tor}(X; H)
$$
for every finite nontrivial subgroup $H$ of $\GL_3(\Z)$.
\end{theorem}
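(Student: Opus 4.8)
The plan is first to reduce the statement to a finite list of explicit families. The asymptotic inequality $X^{1/a(H)} \ll N_3^{\tor}(X;H)$ already holds for every finite nontrivial $H \leq \GL_3(\Z)$ outside $\WW$, by Propositions \ref{prop41a}, \ref{prop41c}, \ref{prop42a} and Remark \ref{rmk42b}; and for $H = H_{24,b} \in \WW$ it is Proposition \ref{prop42c}. So it will suffice to establish, for each of the eight remaining groups in $\WW$, the lower bound recorded in Table \ref{table2} (each of which is at least $X^{1/a(H)}$). For each such $H$ the torus $T$ with $G_T$ conjugate to $H$ is, by Section \ref{Sec3}, built from subfields of its splitting field $L$, and $C(T)$ is an explicit monomial in their discriminants; when several formulas for $C(T)$ are available (Lemmas \ref{lem3c}, \ref{lem3d}, \ref{lem3e}) I will use the most convenient one. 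I will then produce many such $T$ with $C(T) \leq X$ by choosing all but one or two of the constituent fields to be fixed fields of bounded discriminant (or to range over positive-density families) and letting the remaining field(s) range over a family whose counting function is known.

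The two workhorses will be Proposition \ref{prop22b} and Proposition \ref{prop21c}. Whenever $C(T)$ is expressed through a compositum $K_1 K_2$ of number fields with disjoint Galois closures, Proposition \ref{prop22b} shows that at every prime ramified in only one of $K_1,K_2$ the exponent $v_p(D_{K_1K_2})$ is a fixed integer linear combination of $v_p(D_{K_1})$ and $v_p(D_{K_2})$; restricting the varying field to have discriminant coprime to and tamely ramified away from the (bounded) fixed fields' discriminants, $C(T)$ becomes, up to a bounded multiplicative constant, a fixed monomial in the discriminants of the chosen constituents. The lower bound then follows from the density of the chosen families --- Davenport--Heilbronn for $\NF_3(S_3)$ (with $\#\{D_3 \leq Y\} \gg Y$, keeping discriminants squarefree if desired), Bhargava for $\NF_4(S_4)$, M\"aki for cyclic quartic and sextic fields ordered by conductor or relative discriminant (Proposition \ref{prop21b}(1)(2)), the $D_4$-count of Proposition \ref{prop21b}(3) (and its refinements), and the Cohen--Thorne parametrization of $\NF_4(A_4)$ used already in Proposition \ref{prop42c} --- combined, when two or three varying fields occur, by Proposition \ref{prop21c}.

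Concretely, for $H_{6,e}$ one has $C(T) = D_2 D_3 \leq D_3^2$ for the $S_3$-sextic $L_6 \supset L_3$, so $N_3^{\tor}(X;H_{6,e}) \geq \#\{L_3 \in \NF_3(S_3) : D_3 \leq X^{1/2}\} \gg X^{1/2}$. For $H_{12,c}$ and $H_{12,f}$ (item \ref{item3xi}) one has $C(T)=D_1/(D_4D_6)$ resp.\ $D_1/D_4$ with $L_1 = L_4 L_6$ the compositum of a non-Galois cubic $L_4$ and a quadratic field $L_6$; fixing $L_6$ and varying $L_4$ over cubic fields with discriminant prime to $D_6$ and quadratic resolvent $\neq L_6$, Proposition \ref{prop22b} gives $C(T) \asymp D_{L_4}$, so both are $\gg X$. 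For $H_{48,b}$ I will use the form $C(T)=D_6/D_3$ from Lemma \ref{lem3e}: here $L_6 = L_3 F$ with $L_3$ an $S_3$-cubic and $F$ quadratic, so fixing $F$ gives $C(T) \asymp D_{L_3}$ and hence $\gg X$. For $H_{24,g}$ (item \ref{item3xvi}) one has $C(T) = D_8/(D_4D_2)$ where $L_8 = L_4 L_2$ and $L_2 = \Q(\sqrt{D_{L_4}})$ is the quadratic resolvent of the $S_4$-quartic $L_4$; here $D_{L_4}$ and $D_{L_2}$ share all their prime factors, and Proposition \ref{prop22b} (formula for ramification in both factors) gives $v_p(C(T)) = 2$ at each $p \mid D_{L_4}$, so $C(T) \asymp D_{L_4}^2$ and $N_3^{\tor}(X;H_{24,g}) \gg X^{1/2}$. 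For $H_{24,d}$ one has $C(T) = D_{L_2'}\cdot D_{L_6}/(D_{L_3}D_{L_2})$ with $L_6 = L_3 L_2$; now I will let all three fields $L_2',L_3,L_2$ vary (pairwise disjoint closures, pairwise coprime discriminants), obtaining $C(T) \asymp D_{L_2'}D_{L_3}D_{L_2}^2$ and, applying Proposition \ref{prop21c} first to the quadratic--cubic product and then to the remaining square factor, $\gg X\log X$.

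The genuinely delicate cases will be $H_{8,g}$ (target $X^{1/2}(\log X)^2$) and $H_{12,b}$ (target $X^{1/2}\log X$), and these are where the real work lies. For $H_{8,g}$ (item \ref{item3ix}) one has $C(T) = D_K\cdot D_{M_2}/D_{K_2}$, where $K$ is the centre of the $D_4$-closure of the $D_4$-quartic $M_2$ and so is \emph{determined} by $M_2$ --- essentially $K = \Q(\sqrt{\nm_{K_2/\Q}\beta})$ for a generator $\beta$ of $M_2/K_2$ --- so $D_K$ cannot be decoupled from $D_{M_2}/D_{K_2}$; the plan is to parametrize $D_4$-quartics with \emph{prescribed small centre} (forcing $\nm_{K_2/\Q}\beta$ into a fixed square class, a conic condition on $\beta$), to show, refining the parametrization behind Proposition \ref{prop21b}(3), that those with centre of discriminant $d$ and relative discriminant $\leq Z$ number $\asymp Z^{1/2}(\log Z)/\sqrt{d}$, and then to sum over centres $d$ up to $\asymp X^{1/2}$, producing $\gg X^{1/2}(\log X)^2$ tori with $C(T) \leq X$. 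For $H_{12,b}$ one has $C(T) = D_1 D_7/(D_4 D_6)$ which, after fixing the auxiliary quadratic field, is $\asymp (D_{L_7}\, f)^2$, where $D_{L_7}$ is the discriminant of the quadratic resolvent of the cubic $L_4$ and $f$ its conductor over $L_7$; the plan reduces to counting $S_3$-cubic fields by the product (resolvent discriminant)$\times$(conductor), and the bound $\gg X^{1/2}\log X$ must then be extracted from lower bounds for the number of cubic fields with a given quadratic resolvent and bounded conductor, i.e.\ from $3$-torsion estimates for class and ray class groups of real and imaginary quadratic fields. Checking that these restricted counts lose only a constant, not a power of $\log X$, when the auxiliary quadratic field is pinned down is the main obstacle.
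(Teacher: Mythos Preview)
Your reductions and the cases $H_{6,e}$, $H_{12,c}$, $H_{12,f}$, $H_{24,b}$, $H_{24,d}$ are essentially the paper's, but several other cases contain real errors. For $H_{48,b}$ your structural claim is false: the sextic $L_6=L^{\langle g,\,hg^2hi\rangle}$ has \emph{no} quadratic subfield (none of the three index-$2$ subgroups of $S_4\times C_2$ contains $\langle g,\,hg^2hi\rangle$), so one cannot write $L_6=L_3F$; the paper instead uses the form $C(T)=D_8/(D_4D_2)$ with $L_4\in\NF_4(S_4)$ and $L_2\in\NF_2$ disjoint from $L_4^c$, and argues as in \cite[Theorem~4.9]{Lee21}. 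For $H_{24,g}$ your assertion that $D_{L_4}$ and $D_{L_2}$ share all prime factors fails whenever a prime divides the $f$-part of $D_{L_4}=D_{L_3}f^2$ but not $D_{L_3}$; the paper sidesteps any local case analysis via Lemma~\ref{lem42d}, the inequality $v_p(D_8)\le v_p(D_4^3D_2)$ for every odd $p$, which gives $C(T)\ll D_4^2$ at once.

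You have also badly overcomplicated the two ``delicate'' cases. For $H_{8,g}$ the paper simply observes, from equations~(\ref{eq35g}) and~(\ref{eq35h}), that
\[
C(T)^2=\Bigl(\frac{D_KD_{M_2}}{D_{K_2}}\Bigr)^2\le D_M\Bigl(\frac{D_{M_2}}{D_{K_2}}\Bigr)^2=D_L,
\]
and then applies Shankar--Varma's asymptotic $N_8(X;D_4)\sim CX^{1/4}(\log X)^2$ (Proposition~\ref{prop21a}(4)) at $X^2$ to obtain $X^{1/2}(\log X)^2$ immediately---no parametrization of $D_4$-quartics by prescribed centre is needed. For $H_{12,b}$ the paper uses only the quadratic-resolvent bound $D_7\le D_4$ to get $C(T)\le D_1/D_6\le D_4^2D_6^2$, and then counts pairs $(L_4,L_6)\in\NF_3(S_3)\times\NF_2$ with $D_4^2D_6^2\le X$ via Proposition~\ref{prop21c}; your proposed detour through $3$-torsion of ray class groups is neither required nor clearly sufficient to recover the extra $\log X$.
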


\begin{proof}
For each $H \in \WW$, we use the notation as in Section \ref{Sec3}.
\begin{itemize}
    \item $H=H_{6, e}$ : Since $L_2$ is the quadratic resolvent of $L_3 \in \NF_3(S_3)$, we have $D_2 \leq D_3$ so
    \begin{equation*}
            N_3^{\tor}(X; H_{6, e}) = N_3^{L_3}(X; S_3; D_2D_3) \geq N_3^{L_3}(X; S_3; D_3^2) \sim CX^{\frac{1}{2}}.
    \end{equation*}
    
    \item $H=H_{8, g}$ : Since $\displaystyle \left ( \frac{D_K D_{M_2}}{D_{K_2}} \right )^2 \leq D_M \left ( \frac{D_{M_2}}{D_{K_2}} \right )^2 = D_L$ by the equation (\ref{eq35g}), we have
    \begin{equation*}
        N_3^{\tor}(X; H_{8, g})
        = N_8^{L}(X; D_4; \frac{D_K D_{M_2}}{D_{K_2}})
        \geq N_8^{L}(X^2; D_4; D_L) \sim CX^{\frac{1}{2}} (\log X)^2
    \end{equation*}
    by Proposition \ref{prop21a}(4).
    
    \item $H=H_{12, b}$ : Since $L_7$ is the quadratic resolvent of $L_4 \in \NF_3(S_3)$, we have $D_7 \leq D_4$ so
        \begin{equation*}
        \begin{split}
            N_3^{\tor}(X; H_{12, b})
            & = N_6^{L_1}(X; D_6; \frac{D_1D_7}{D_4D_6}) \\
            & \geq N_6^{L_1}(X; D_6; \frac{D_1}{D_6}) \\
            & \geq \# \left \{ (L_4, L_6) \in \NF_3(S_3) \times \NF_2 :  D_4^2D_6^2 \leq X \text{ and } L_6 \not\subset L_4^c \right \} \\
            & \sim CX^{\frac{1}{2}} \log X - O(X^{\frac{1}{2}}) \\
            & \sim CX^{\frac{1}{2}} \log X.
        \end{split}
    \end{equation*}
        
    \item $H \in \left \{ H_{12, c}, \, H_{12, f}, \, H_{48, b} \right \}$ : They can be proved as in the proof of \cite[Theorem 4.9]{Lee21}.

    \item $H=H_{24, b}$ : See Proposition \ref{prop42c}.
    
    \item $H=H_{24, d}$ : Since $D_6$ has $3$ subgroups of order $6$, $L_6^c$ has $3$ quadratic subfields. For a constant $M>0$ which satisfies $\# \left \{ L_2' \in \NF_2 : D_2' \leq M \right \} = N_2(M) \geq 6$, we have
    \begin{equation*}
    \begin{split}
        N_3^{\tor}(X; H_{24, d})
        & = \# \left \{ (L_6, L_2') \in \NF_6(D_6) \times \NF_2 :  D_2' \frac{D_6}{D_3D_2} \leq X \text{ and } L_2' \not\subset L_6^c \right \} \\
        & \geq \sum_{\substack{L_6 \in \NF_6(D_6) \\ \frac{D_6}{D_3D_2} \leq \frac{X}{M}}} \left ( 
        N_2(X \frac{D_3D_2}{D_6} ) - 3 \right ) \\
        & \geq \frac{1}{2} \sum_{\substack{L_6 \in \NF_6(D_6) \\ \frac{D_6}{D_3D_2} \leq \frac{X}{M}}} N_2(X \frac{D_3D_2}{D_6} ) \\
        & \gg \frac{X}{M} \log \frac{X}{M}
        \end{split}
    \end{equation*}
    by Propositions \ref{prop1c}(2) and \ref{prop21c}. 
        
    \item $H=H_{24, g}$ : By Lemma \ref{lem42d}, 
    \begin{equation*}
        N_3^{\tor}(X; H_{24, g})
        = N_4^{L_4}(X; S_4; \frac{D_8}{D_4D_2})
        \geq N_4^{L_4}(CX; S_4; D_4^2) 
        \sim CX^{\frac{1}{2}}. \qedhere
    \end{equation*}
\end{itemize}
\end{proof}

\subsection{Upper bound for non-abelian case} \label{Sub43}
Now we move to the upper bounds. If $H$ is one of $H_{12, b}$, $H_{12, c}$ and $H_{24, d}$, the Cohen-Lenstra heuristics for $p=3$ improves the upper bound of $N_3^{\tor}(X; H)$. 

\begin{proposition} \label{prop43a}
\begin{enumerate}
    \item $\displaystyle N_3^{\tor}(X; H) \ll_{\varepsilon} X^{a(H)\left ( 1 + \frac{\log 2 + \varepsilon}{\log \log X} \right )}$ for $H \in \left \{ H_{12, b}, H_{12, c}, H_{24, d} \right \}$.
    
    \item Under the assumption of Conjecture \ref{conj1d}, the upper bounds of $N_3^{\tor}(X; H)$ for $H \in \left \{ H_{12, b}, H_{12, c}, H_{24, d} \right \}$ are given as in Table \ref{table2}.
\end{enumerate}
\end{proposition}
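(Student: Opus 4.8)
The plan is to treat the three groups one at a time, in each case using the Artin conductor formulas of Section \ref{Sec3} together with the discriminant relations of Propositions \ref{prop22a} and \ref{prop22b} to turn $N_3^{\tor}(X;H)$ into a counting problem for number fields that is already under control.

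\emph{The case $H=H_{12,c}$.} For the torus of type \ref{tor12c} one has $C(T)=\frac{D_3}{D_4D_8}=\frac{D_1}{D_4D_6}$. I would first check that the sextic field $L_1=L^{gh}$ lies in $\NF_6(D_6)$ (its Galois closure is $L$ and $\Gal(L/\Q)\cong S_3\times C_2$, and the two reflection classes of $D_6$ are fused by an outer automorphism, so both resulting degree--$6$ actions give the class $\NF_6(D_6)$), that $L_4$ is its unique cubic subfield and $L_6$ its unique quadratic subfield, and that $L_1=L_4L_6$ with $L_4\in\NF_3(S_3)$, $L_6\cap L_4^{c}=\Q$, with $(L_4,L_6)$ determining $L_1$. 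Hence $N_3^{\tor}(X;H_{12,c})$ is exactly the quantity $\#\{L_6\in\NF_6(D_6):D_{L_6}/(D_{L_3}D_{L_2})\le X\}$, i.e.\ $N_3^{\tor}(X;H_{12,c})=N_2^{\tor}(X;H_{12,A})$, so part (1) follows from (\ref{eq21e}) and part (2) from (\ref{eq21f}).

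\emph{The case $H=H_{24,d}$.} For the torus of type \ref{tor24d} one has $C(T)=D_2'\cdot\mathfrak c(L_6)$ with $\mathfrak c(L_6):=\frac{D_6}{D_3D_2}\ge 1$, where $L_6\in\NF_6(D_6)$ (cubic subfield $L_3$, quadratic subfield $L_2$) and $L_2'\in\NF_2$ with $L_2'\not\subset L_6^{c}$. Using the classical estimate $\#\{L_2'\in\NF_2:D_{2'}\le Y\}\ll Y$ I would bound
$$
N_3^{\tor}(X;H_{24,d})\ \le\ \sum_{\substack{L_6\in\NF_6(D_6)\\ \mathfrak c(L_6)\le X}}\#\bigl\{L_2'\in\NF_2:D_{2'}\le X/\mathfrak c(L_6)\bigr\}\ \ll\ X\sum_{\substack{L_6\in\NF_6(D_6)\\ \mathfrak c(L_6)\le X}}\frac{1}{\mathfrak c(L_6)},
$$
and then estimate the last sum by partial summation against the counting function $A(Y):=\#\{L_6\in\NF_6(D_6):\mathfrak c(L_6)\le Y\}$. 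With the unconditional bound $A(Y)\ll_\varepsilon Y^{1+(\log 2+\varepsilon)/\log\log Y}$ of (\ref{eq21e}) this gives $\sum\mathfrak c(L_6)^{-1}\ll_\varepsilon X^{(\log 2+\varepsilon)/\log\log X}$, hence part (1); with $A(Y)\ll_\varepsilon Y(\log Y)^{1+\varepsilon}$ from (\ref{eq21f}) it gives $\sum\mathfrak c(L_6)^{-1}\ll_\varepsilon(\log X)^{2+\varepsilon}$, hence the Table \ref{table2} bound $N_3^{\tor}(X;H_{24,d})\ll_\varepsilon X(\log X)^{2+\varepsilon}$.

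\emph{The case $H=H_{12,b}$ (the main difficulty).} For the torus of type \ref{tor12b}, $C(T)=\frac{D_1D_7}{D_4D_6}$, and I would parametrize $T$ by the pair $(L_4,L_6)$ as above, with $L_7$ the quadratic resolvent of $L_4\in\NF_3(S_3)$ and $L_1=L_4L_6$. By Proposition \ref{prop22b}, in the generic case of disjoint tame ramification one has $D_1=D_4^{2}D_6^{3}$, so $C(T)=D_4D_6^{2}D_7$; since $D_4=D_7\cdot N_{L_7/\Q}(\mathfrak f_{L_4^c/L_7})$ by the equation (\ref{eq22b}), and this norm is a perfect square away from $3$, the quantity $C(T)$ is a perfect square of size $\asymp (D_6\sqrt{D_4D_7})^{2}$ up to bounded powers of $2$ and $3$ --- consistent with $a(H_{12,b})=2$ --- and at the remaining primes (common ramification of $L_4,L_6$, and $2,3$) the exponents deviate from this by $O(1)$, a deviation controlled by a divisor factor of $\operatorname{rad}(D_4D_6)$. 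Thus $C(T)\le X$ forces $D_6\sqrt{D_4D_7}\ll X^{1/2}$ up to such factors, and I would bound $N_3^{\tor}(X;H_{12,b})$ by fixing $L_4$, counting the admissible $L_6$ (which is $\ll_\varepsilon X^{1/2}(D_6\,\text{part})^{-1}\tau(D_4)^{O(1)}$), and summing over $L_4$. The hard part is exactly this sum over $S_3$--cubic fields and the bookkeeping of the correction factors: unconditionally one uses the count of cubic fields underlying Proposition \ref{prop21a}(2) together with the divisor bound of Lemma \ref{lem23c}, in the form $\tau(n)\ll_\varepsilon n^{(\log 2+\varepsilon)/\log\log n}$, to absorb the $\tau$--factors, obtaining $N_3^{\tor}(X;H_{12,b})\ll_\varepsilon X^{\frac1{a(H_{12,b})}(1+(\log 2+\varepsilon)/\log\log X)}$; under Conjecture \ref{conj1d} the Cohen--Lenstra control (equivalently, the Davenport--Heilbronn averaging) of the $3$--torsion in the class groups and ray class groups of the resolvent fields $L_7$ replaces these divisor and cubic--counting factors by bounded powers of $\log X$, and tracking the contributions of the three parameters attached to $D_6$, $D_7$ and the cubic conductor yields the Table \ref{table2} bound $N_3^{\tor}(X;H_{12,b})\ll_\varepsilon X^{1/2}(\log X)^{6+\varepsilon}$. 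Throughout, the delicate point is to handle the wild primes $2,3$ and the common ramification of the fields involved uniformly enough to keep the power of $\log X$ (resp.\ the $o(1)$ in the exponent) under control.
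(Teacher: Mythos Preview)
Your plan is correct and matches the paper's proof in all three cases: the reduction $N_3^{\tor}(X;H_{12,c})=N_2^{\tor}(X;H_{12,A})$, the product-distribution/partial-summation bound for $H_{24,d}$, and the $(L_4,L_6)$-parametrization with correction factors for $H_{12,b}$ are exactly what the paper does. For $H_{12,b}$ the paper makes your sketch precise by importing the parametrization $(F,K,E,f,m_1,m_2,g)$ and the key estimates on the weighted sum $g(X)$ (morally $\sum_E h_3(E)\tau(D_E)$) directly from \cite[Section~4.2 and Corollaries~4.4,~4.8]{Lee21}, which is where the unconditional $X^{(\log 2+\varepsilon)/\log\log X}$ and the conditional $(\log X)^{1+\varepsilon}$ savings actually come from; you should cite those results rather than redo them.
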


\begin{proof}
The case $H=H_{12, c}$ follows from the equality $N_3^{\tor}(X; H_{12, c}) = N_2^{\tor}(X; H_{12, A})$ and Proposition \ref{prop1c}. The function $\displaystyle f(X) = \frac{\log X}{\log \log X}$ is increasing on $X > e^e = 15.15...$ so
    $$
    N_2^{\tor}(Y; H_{12, A}) \ll_{\varepsilon} Y X^{\frac{\log 2 + \varepsilon}{\log \log X}}
    $$
    for $X \geq Y \geq 16$. Since $N_3^{\tor}(X; H_{24, d})$ is bounded above by the product distribution of $N_2^{\tor}(X; H_{12, A})$ and $N_2(X)$, we have
\begin{equation*}
\begin{split}
N_3^{\tor}(X; H_{24, d})
& \leq 2 \sum_{n \leq X} N_2^{\tor}(\frac{X}{n}; H_{12, A}) \\
& \ll_{\varepsilon} \sum_{n \leq \frac{X}{16}} \frac{X}{n} X^{\frac{\log 2 + \varepsilon}{\log \log X}} + \sum_{\frac{X}{16} < n \leq X} N_2^{\tor}(16; H_{12, A}) \\
& \ll_{\varepsilon} X^{1 + \frac{\log 2 + \varepsilon}{\log \log X}}.
\end{split}
\end{equation*}
If we assume Conjecture \ref{conj1d}, then we have
\begin{equation*}
\begin{split}
N_3^{\tor}(X; H_{24, d})
& \leq 2 \sum_{n \leq X} N_2^{\tor}(\frac{X}{n}; H_{12, A}) \\
& \ll_{\varepsilon} \sum_{n \leq X} \frac{X}{n} (\log X)^{1 + \varepsilon}\\
& \ll_{\varepsilon} X (\log X)^{2 + \varepsilon}.
\end{split}
\end{equation*}

Now consider the case $H=H_{12, b}$. Let $F$, $K$, $L$, $E$, $f$, $m_1$, $m_2$, $w(f)$ and $g(X)$ be defined as in \cite[Section 4.2]{Lee21}. Denote
$$
A_1(X) := N_{3,2}^{F,K}(X; S_3; D_E \frac{D_F D_K^2}{m^2})
$$
and
$$
A_2(X; B) := \sum_{\substack{m_1, m_2 \in \, \, \Sqf \\ (m_1m_2, 6)=1 \\ (m_1, m_2)=1}}
\sum_{\substack{E \in \NF_2 \\ m_1 \mid D_E}} 
\sum_{\substack{f \in \, \Sqf_3 \\ m_2 \mid f \\  D_E f^2  \leq X \\ B \leq D_E f < 2B}}
\frac{X^{\frac{1}{2}}}{D_E f}h_3(E) \cdot 2^{w(f)}
$$
for $B \leq X^{\frac{1}{2}}$. Following the arguments of \cite[Section 4]{Lee21}, one can show that
\begin{equation} \label{eq43a1}
N_3^{\tor}(X; H_{12, b}) \leq A_1(\beta X)
\end{equation}
for $\beta := 2^9 3^3$, 
\begin{equation} \label{eq43a2}
A_1(X) \ll \sum_{i=0}^{\left \lfloor \log_2 X^{\frac{1}{2}} \right \rfloor} A_2(X; 2^i)
\end{equation}
and
\begin{equation} \label{eq43a3}
\begin{split}
A_2(X; B) 
& \ll \frac{X^{\frac{1}{2}}}{B} 
\sum_{\substack{f \in \, \Sqf_3 \\ f < 2B }} 2^{w(f)} \tau(f)
g(\frac{2B}{f}) \\
& \ll_{\varepsilon} \frac{X^{\frac{1}{2}}}{B} 
\sum_{\substack{f \in \, \Sqf_3 \\ f < 2B }} \tau(f)^2
\frac{2B}{f} X^{\frac{1}{2} \left ( \frac{\log 2 + \varepsilon}{\log \log X} \right )} \;\; \text{(\cite[Corollary 4.4]{Lee21})} \\
& \ll_{\varepsilon} X^{\frac{1}{2}\left ( 1 + \frac{\log 2 + \varepsilon}{\log \log X} \right )} \sum_{f<2B} \frac{\tau(f)^2}{f}
\end{split}
\end{equation}
for $B \leq X^{\frac{1}{2}}$. By summing over the intervals $f \in [2^j, 2^{j+1})$ for $0 \leq j \leq \log_2 (2B)$, we have
\begin{equation} \label{eq43a5}
\begin{split}
\sum_{f<2B} \frac{\tau(f)^2}{f}
& \leq \sum_{j=0}^{\left \lfloor \log_2 (2B) \right \rfloor} \frac{1}{2^j} \sum_{f < 2^{j+1}} \tau(f)^2 \\
& \ll \sum_{j=0}^{\left \lfloor \log_2 (2B) \right \rfloor} \frac{1}{2^j} 2^{j+1} (\log 2^{j+1})^3 \;\; \text{(Lemma \ref{lem23c})} \\
& \ll (\log B)^4.
\end{split}
\end{equation}
The inequalities (\ref{eq43a1}), (\ref{eq43a2}), (\ref{eq43a3}) and (\ref{eq43a5}) imply that $N_3^{\tor}(X; H_{12, b}) \ll_{\varepsilon} X^{\frac{1}{2}\left ( 1 + \frac{\log 2 + \varepsilon}{\log \log X} \right )}$. 

If we assume Conjecture \ref{conj1d}, then
\begin{equation} \label{eq43a4}
\begin{split}
A_2(X; B) 
& \ll_{\varepsilon} \frac{X^{\frac{1}{2}}}{B} 
\sum_{\substack{f \in \, \Sqf_3 \\ f < 2B }} 2^{w(f)} \tau(f) \cdot \frac{2B}{f} (\log B)^{1+\varepsilon} \;\; \text{(\cite[Corollary 4.8]{Lee21})} \\
& \ll_{\varepsilon} X^{\frac{1}{2}} (\log B)^{1+\varepsilon}
\sum_{f<2B} \frac{\tau(f)^2}{f}.
\end{split}
\end{equation}
The inequalities (\ref{eq43a1}), (\ref{eq43a2}), (\ref{eq43a5}) and (\ref{eq43a4}) imply that $N_3^{\tor}(X; H_{12, b}) \ll_{\varepsilon} X^{\frac{1}{2}} (\log X)^{6+\varepsilon}$.
\end{proof}

We proceed by case-by-case analysis. 

\begin{proposition} \label{prop43b}
The following estimates hold:
\begin{enumerate}
    \item \label{43b1} $N_3^{\tor}(X; H_{6, e}) \ll X^{\frac{1}{2}}(\log X)^2$.
    
    \item \label{43b2} $N_3^{\tor}(X; H_{12, f}) \ll X$.
    
    \item \label{43b3} $N_3^{\tor}(X; H_{48, b}) \ll X(\log X)^4 \log \log X$.
    
    \item \label{43b4} $N_3^{\tor}(X; H_{24, b}) \ll X(\log X)^3 \log \log X$.
\end{enumerate}
\end{proposition}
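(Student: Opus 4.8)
The plan is to handle all four estimates by the same scheme: start from the explicit formula for $C(T)$ recorded in Section~\ref{Sec3}, identify the number fields occurring and the Galois relations among them, rewrite $C(T)$ as a product of discriminants of smaller fields via Proposition~\ref{prop22a}, the tower formula $D_L = D_K^{[L:K]}\nm_{K/\Q}(\mathfrak d_{L/K})$, and — for compositums — Proposition~\ref{prop22b}, and then bound the resulting counting problem using the known field-counting results (Davenport--Heilbronn and Bhargava from Proposition~\ref{prop21a}(2), Cohen--Thorne as in Proposition~\ref{prop42c}), the product-distribution Proposition~\ref{prop21c}, Lemma~\ref{lem23c}, and the Tauberian Proposition~\ref{prop23a}.

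For $H_{12,f}$ the splitting field $L$ has Galois group $D_6$, and $C(T)=D_1/D_4$ where $L_4\in\NF_3(S_3)$ is the cubic subfield and $L_6$ the quadratic subfield of the sextic $L_1=L_4L_6\in\NF_6(D_6)$. By the tower formula $C(T)=D_4\cdot\nm_{L_4/\Q}(\mathfrak d_{L_1/L_4})$, and a prime-by-prime computation with Proposition~\ref{prop22b} (splitting into the cases where $p$ is unramified, ramifies only in $L_6$, only in $L_4$, or in both — the last one being the one to watch) shows $v_p(C(T))=v_p(D_4)+2v_p(D_6)$ for every odd $p\neq3$, so $C(T)$ equals $D_4D_6^2$ up to a factor supported on $\{2,3\}$. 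Hence $N_3^{\tor}(X;H_{12,f})\ll\#\{(L_4,L_6)\in\NF_3\times\NF_2 : D_4D_6^2\ll X\}\ll\sum_{L_6}\#\{L_4\in\NF_3 : D_4\ll X/D_6^2\}\ll X\sum_{L_6\in\NF_2}D_6^{-2}\ll X$, using Davenport--Heilbronn and the convergence of $\sum_{L_6}D_6^{-2}$. For $H_{6,e}$, $C(T)=D_2D_3$ with $L_3\in\NF_3(S_3)$ and $L_2$ its quadratic resolvent; writing $D_3=D_2g^2$ gives $C(T)=(D_2g)^2$, so the count equals $\sum_{L_2}\#\{\text{cyclic cubic extensions }\psi/L_2 : D_2\,g(\psi)\le X^{1/2}\}$. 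Bounding the number of such extensions of $L_2$ of bounded conductor by a divisor-type function (equivalently, analysing the Dirichlet series $\sum_{L_3}(D_2D_3)^{-s}$, which converges for $\re s>\tfrac12$ with a pole of bounded order at $s=\tfrac12$) gives $X^{1/2}(\log X)^2$, the two logarithms coming from the sum over the conductor direction and the sum over quadratic fields.

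The estimates for $H_{24,b}$ and $H_{48,b}$ are the substantial ones. In both, $C(T)=D_8/(D_4D_2)$ with $L_8=L_4L_2$ a compositum of a quartic field $L_4$ (an $A_4$-quartic for $H_{24,b}$, an $S_4$-quartic for $H_{48,b}$) with a quadratic field $L_2$; for $H_{24,b}$ the quadratic field is a free parameter, whereas for $H_{48,b}$ it is $\Q(\sqrt{d_K\,\disc L_4})$ for an independent quadratic field $K$. Proposition~\ref{prop22b}, applied case by case indexed by the inertia cycle type of $L_4$ at $p$ — trivial, $(3)(1)$, $(2)(2)$ for $A_4$, and additionally $(2)(1)(1)$ and $(4)$ for $S_4$ — together with whether $L_2$ ramifies at $p$, gives $v_p(C(T))$ in each scenario, exhibiting $C(T)$ (away from $p\in\{2,3\}$) as $D_4$ times a conductor-type function recording the ramification of $L_2$ relative to $L_4$. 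I would then parametrize $L_4$ by its cubic resolvent $k$ and the integer $f$ with $D_4=D_kf^2$, use the Cohen--Thorne description (as in the proof of Proposition~\ref{prop42c}) to write $\sum_f|\mathcal F(k,f^2)|f^{-s}$ as an Euler product lying in $\mathcal M(1,\tfrac12)$, bound $|\mathcal F(k,f^2)|$ by a $\tau$-power via \cite{CT16}, and combine this with the Dirichlet series of the free quadratic field. Summing over $k$, $f$ and the quadratic field, with Lemma~\ref{lem23c} controlling the $\tau$-power sums and dyadic summation (or Proposition~\ref{prop23a}) extracting the main term, yields $N_3^{\tor}(X;H_{24,b})\ll X(\log X)^3\log\log X$ and $N_3^{\tor}(X;H_{48,b})\ll X(\log X)^4\log\log X$, the $\log\log X$ appearing for the same reason as in the $D_4$-quartic count of Proposition~\ref{prop21b}(3).

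The main obstacle is precisely the prime-by-prime bookkeeping in the last two cases: at an "overlap" prime $p$ ramifying simultaneously in $L_4$ and in $L_2$ the exponent $v_p(C(T))$ can drop below what $D_4D_2^{\,c}$ would predict, so one must verify that this drop is compensated by the divisibility constraint it forces on $D_4$ (for instance $d_K^2\mid D_4$), so that after summing over all possible overlap sets the series still converges and loses only a bounded power of $\log X$. Keeping every estimate uniform in the cubic resolvent $k$ — where the series $\sum_k D_k^{-1/2}$ itself contributes a logarithm — is the other delicate point, and is where the ambient field-counting inputs (Davenport--Heilbronn, Bhargava, Cohen--Thorne) must be used in their quantitative, rather than merely asymptotic, form.
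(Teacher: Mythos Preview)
Your outline for parts~(3) and~(4) is close to the paper's argument and would go through, but two points should be corrected. First, the paper does not work with the Dirichlet series $\sum_f |\mathcal F(k,f^2)|f^{-s}$; it uses the pointwise Cohen--Thorne bound $|\mathcal F(E,f^2)|\ll h_2(E)\,3^{w(f)}$ and then averages $h_2(E)$ over the cubic resolvent, via Bhargava's $\sum_{E\in\NF_3(S_3),\,D_E\le Y}h_2(E)\ll Y$ for $H_{48,b}$ and the pointwise bound $h_2(E)\ll D_E^{3/10}$ of \cite{BSTTTZ20} for the cyclic resolvents occurring in $H_{24,b}$. Second, the factor $\log\log X$ has nothing to do with the $D_4$-conductor count of Proposition~\ref{prop21b}(3); it enters through Gronwall's bound $\sigma(n)\ll n\log\log n$, applied when summing $\sum_{m_2\mid f}m_2$ over the set of primes where both $L_4$ and $L_2$ ramify. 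For part~(1) your parametrisation $C(T)=(D_2 g)^2$ is right, but the key input you omit is the Cohen--Thorne bound $\#\{L_3:D_3=D_2 f^2\}\ll h_3(L_2)\,2^{w(f)}$ together with the Davenport--Heilbronn average $\sum_{D_2\le Y}h_3(L_2)\ll Y$; both logarithms then come from $\sum_{f}\tau(f)/f$, not one from each of the two sums you name.

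The genuine gap is in part~(2). Your claimed identity $v_p(C(T))=v_p(D_4)+2v_p(D_6)$ is false at exactly the overlap primes you flag. At a prime $p\nmid 6$ with splitting type $(1^21)$ in the cubic $L_4$ and ramified in the quadratic $L_6$, Proposition~\ref{prop22b} gives $v_p(D_1)=6-(\gcd(2,2)+\gcd(1,2))=3$, hence $v_p(C(T))=v_p(D_1/D_4)=2$, while $v_p(D_4D_6^2)=3$; the same drop occurs when the type in $L_4$ is $(1^3)$. Thus $C(T)\le X$ does \emph{not} imply $D_4D_6^2\ll X$, and your count $\#\{(L_4,L_6):D_4D_6^2\ll X\}$ is not an upper bound for $N_3^{\tor}(X;H_{12,f})$. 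One could repair this by summing over the overlap set as in part~(3), but the paper's argument is both different and a one-liner: since $L_4\subset L_1$ with $[L_1:L_4]=2$ one has $D_4^2\mid D_1$, so $D_1/D_4\le X$ forces $D_1\le X^2$, and then $N_6(X^2;D_6)\sim CX$ by Proposition~\ref{prop21a}(3).
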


\begin{proof}
\begin{enumerate}
    \item By \cite[Theorem 2.5]{CT14}, the number of $F \in \NF_3(S_3)$ such that $D_F=D_Ef^2$ for given $E \in \NF_2$ and $f \in \Sqf_3$ is bounded by $O(h_3(E) \cdot 2^{w(f)})$. This implies that
    \begin{equation*}
\begin{split}
N_3^{\tor}(X; H_{6, e})
& = N_3^{F}(X; S_3; D_FD_E) \\
& \ll \sum_{\substack{f \in \, \Sqf_3 \\ f < X^{\frac{1}{2}} }} \sum_{\substack{E \in \NF_2 \\ D_E \leq \frac{X^{\frac{1}{2}}}{f}}} h_3(E) 2^{w(f)} \\
& \ll \sum_{\substack{f < X^{\frac{1}{2}} }} \frac{X^{\frac{1}{2}}}{f} \tau(f) \;\; \text{(\cite[Theorem 3]{DH71})} \\
& \ll X^{\frac{1}{2}}(\log X)^2.
\end{split}
\end{equation*}
The last inequality can be deduced as in the inequality (\ref{eq43a5}) above.

\item By Proposition \ref{prop21a}(3), we have
\begin{equation*} 
N_3^{\tor}(X; H_{12, f}) 
= N_6^{L_1}(X; D_6; \frac{D_1}{D_4})
\leq N_6^{L_1}(X^2; D_6; D_1) 
\sim CX.
\end{equation*}

\item Let $F \in \NF_4(S_4)$, $K \in \NF_2$ such that $K \cap F^c = \Q$ and $L = FK \in \NF_8(S_4 \times C_2)$. Also let
\begin{equation*}
\begin{split}
S_1 & := \left \{ p >3 : p \mid \gcd (D_F, D_K) \text{ and } p \text{ has splitting type } (1^211), (1^22) \text{ or } (1^31) \text{ in } F \right \} \\
S_2 & := \left \{ p >3 : p \mid \gcd (D_F, D_K) \text{ and } p \text{ has splitting type } (1^21^2), (2^2) \text{ or } (1^4) \text{ in } F \right \} \\
m_i & := \prod_{p \in S_i} p \;\; (i=1, 2).
\end{split}
\end{equation*}
Then Proposition \ref{prop22b} implies that
$$
C(L) := \frac{D_L}{D_F D_K} = \frac{D_F D_K^3}{c m_1^2m_2^4}
$$
for a positive integer $c \leq \beta := 2^{12} 3^4$ (cf. \cite[Table 3]{MTTW20}) so
\begin{equation} \label{eq43b1}
\begin{split}
N_3^{\tor}(X; H_{48, b})
& = \# \left \{ (F, K) \in \NF_4(S_4) \times \NF_2 : K \cap F^c = \Q \text{ and } C(L) \leq X \right \} \\
& \leq N_{4,2}^{F,K}(\beta X; S_4; \frac{D_F D_K^3}{m_1^2m_2^4}) \\
& =: A_1(\beta X).
\end{split}
\end{equation}
The number $A_1(X)$ can be bounded as in \cite[Section 4.2]{Lee21}.
\begin{itemize}
    \item For given $F$, $m_1$ and $m_2$, the number of $K \in \NF_2$ which satisfies $m_1m_2 \mid D_K$ and $\displaystyle \frac{D_K}{m_1m_2} \leq \left ( \frac{m_2X}{m_1 D_F} \right )^{\frac{1}{3}}$ is at most $\displaystyle 2 \left ( \frac{m_2X}{m_1 D_F} \right )^{\frac{1}{3}}$. 
    
    \item For every $F \in \NF_4(S_4)$ and its cubic resolvent $E \in \NF_3(S_3)$, there is $f \in \Sqf_2$ such that $D_F = D_E f^2$. By \cite[Theorem 5.1]{CT16}, $m_1$ divides $D_E$ and $m_2$ divides $f$.
    
    \item By \cite[Theorem 1.4 and Proposition 6.4]{CT16}, the number of $F \in \NF_4(S_4)$ such that $D_F = D_E f^2$ for given $E \in \NF_3(S_3)$ and $f \in \Sqf_2$ is bounded by $O(h_2(E) \cdot 3^{w(f)})$.
    
    \item If $\displaystyle \frac{m_2X}{m_1 D_F} = \frac{m_2X}{m_1 D_E f^2} \geq 1$, then $\displaystyle D_E f^2 \leq \frac{m_2 X}{m_1} \leq fX$ so $D_Ef^2 \leq X^2$. 
\end{itemize} 
Combining all of these, we obtain
\begin{equation} \label{eq43b2}
A_1(X) \ll \sum_{i=0}^{\left \lfloor 2 \log_2 X \right \rfloor} A_2(X; 2^i)
\end{equation}
for
\begin{equation} \label{eq43b3}
A_2(X; B) := 
\sum_{\substack{m_1,  m_2 \in \, \Sqf \\ (m_1m_2, 6)=1 \\ (m_1, m_2)=1}}
\sum_{\substack{E \in \NF_3(S_3) \\ m_1 \mid D_E}} 
\sum_{\substack{f \in \, \Sqf_2 \\ m_2 \mid f \\ D_E f^2 \leq \frac{m_2 X}{m_1} \\ B \leq D_Ef^2 < 2B}}
\left ( \frac{m_2}{m_1} \cdot \frac{X}{B} \right )^{\frac{1}{3}} h_2(E) \cdot 3^{w(f)}.
\end{equation}
Now we bound the right-hand side of the inequality (\ref{eq43b3}).
\begin{itemize}
    \item The inequality $\displaystyle B \leq D_E f^2 \leq \frac{m_2X}{m_1}$ implies that $\displaystyle m_1 \leq \frac{m_2X}{B}$. Therefore the sum $\displaystyle \sum_{m_1, m_2} \left ( \frac{m_2}{m_1} \right )^{\frac{1}{3}}$ for given $E$ and $f$ is bounded by
    \begin{equation} \label{eq43b4}
    \begin{split}
    \sum_{m_2 \mid f} \sum_{m_1 \leq \frac{m_2X}{B}} \left ( \frac{m_2}{m_1} \right )^{\frac{1}{3}} 
    & \ll \sum_{m_2 \mid f} m_2^{\frac{1}{3}} \left ( \frac{m_2X}{B} \right )^{\frac{2}{3}} \\
    & = \left ( \frac{X}{B} \right )^{\frac{2}{3}} \sum_{m_2 \mid f} m_2 \\
    & \ll \left ( \frac{X}{B} \right )^{\frac{2}{3}} f \log \log X.
    \end{split}
    \end{equation}
    The last inequality is due to the classical upper bound $\sigma (n) \ll n \log \log n$ of Gronwall \cite{Gro13}.
    
    \item The inequality (\ref{eq43b4}) implies that
    \begin{equation} \label{eq43b5}
    \begin{split}
    A_2(X; B) 
    & \ll \frac{X \log \log X}{B}
    \sum_{f < (2B)^{\frac{1}{2}}} 3^{w(f)} f
    \sum_{\substack{E \in \NF_3(S_3) \\ D_E < \frac{2B}{f^2}}} h_2(E) \\
    & \ll \frac{X \log \log X}{B}
    \sum_{f < (2B)^{\frac{1}{2}}} 3^{w(f)} f \cdot \frac{2B}{f^2} \;\; \text{(\cite[Theorem 5]{Bha05})} \\
    & \ll X \log \log X \sum_{f < (2B)^{\frac{1}{2}}} \frac{\tau(f)^c}{f} \;\; (c := \frac{\log 3}{\log 2})  \\
    & \ll X \log \log X (\log B)^3. 
    \end{split}
    \end{equation}
    The last inequality can be deduced using Lemma \ref{lem23c} as in the inequality (\ref{eq43a5}).
\end{itemize}
By the inequalities (\ref{eq43b1}), (\ref{eq43b2}) and (\ref{eq43b5}), we have
$$
N_3^{\tor}(X; H_{48, b}) 
\ll \sum_{i=0}^{\left \lfloor 2 \log_2 (\beta X) \right \rfloor} X \log \log X (\log 2^i)^3
\ll X(\log X)^4 \log \log X.
$$

\item Its proof is similar to the proof of (\ref{43b3}). Following the arguments above, we have
$$
N_3^{\tor}(X; H_{24, b}) 
\ll \sum_{i=0}^{\left \lfloor 2 \log_2 (\beta X) \right \rfloor} A_3(\beta X; 2^i)
$$
for
$$
A_3(X; B) := \frac{X \log \log X}{B}
    \sum_{f < (2B)^{\frac{1}{2}}} 3^{w(f)} f
    \sum_{\substack{E \in \NF_3(C_3) \\ D_E < \frac{2B}{f^2}}} h_2(E).
$$
By \cite[Theorem 1.1]{BSTTTZ20}, $h_2(E) \ll D_E^{\frac{3}{10}}$ so
\begin{equation} \label{eq43b6}
\begin{split}
A_3(X; B)
& \ll \frac{X \log \log X}{B}
    \sum_{f < (2B)^{\frac{1}{2}}} 3^{w(f)} f \left ( \frac{2B}{f^2} \right )^{\frac{4}{5}} \\
& \ll \frac{X \log \log X}{B^{\frac{1}{5}}}
    \sum_{f < (2B)^{\frac{1}{2}}} \frac{\tau(f)^c}{f^{\frac{3}{5}}} \;\; (c = \frac{\log 3}{\log 2}) \\
& \ll X \log \log X (\log B)^2 
\end{split}    
\end{equation}
(following the inequality (\ref{eq43a5})) and
\begin{equation*}
N_3^{\tor}(X; H_{24, b})
\ll \sum_{i=0}^{\left \lfloor 2 \log_2 (\beta X) \right \rfloor}
X \log \log X (\log 2^i)^2
\ll X (\log X)^3 \log \log X. \qedhere
\end{equation*}
\end{enumerate}
\end{proof}

\begin{remark} \label{rmk43c}
Cohen-Martinet heuristics for cyclic cubic fields and $p=2$ \cite[p. 128]{CM87} implies that
$$
\sum_{\substack{E \in \NF_3(C_3) \\ D_E < \frac{2B}{f^2}}} h_2(E)
\ll \frac{B^{\frac{1}{2}}}{f}.
$$
However this does not improve the upper bound of the inequality (\ref{eq43b6}).
\end{remark}

So far, we have proved that 
$$
N_3^{\tor}(X; H) \ll X^{\frac{1}{a(H)}} (\log X)^{O(1)}
$$ 
(under the assumption of Conjecture \ref{conj1d} and Malle's conjecture for quartic $A_4$-fields) if $H$ is not conjugate to $H_{8, g}$ ($H_{8, k}$) or $H_{24, g}$ ($H_{24, e}$, $H_{24, i}$). Unfortunately, our upper bounds for the cases $H=H_{8, g}$ and $H=H_{24, g}$ are much weaker. 

\begin{proposition} \label{prop43d}
$N_3^{\tor}(X; H_{8, g}) \ll X^{\frac{3}{4} - \delta}$ for some $\delta > 0$.
\end{proposition}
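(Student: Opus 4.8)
The plan is to pass from tori to their octic splitting fields and reduce the problem to counting $D_4$-octic number fields of small \emph{radical}. By the classification of case \ref{tor8g} in Section~\ref{Sec3}, any torus $T$ with $G_T$ conjugate to $H_{8,g}$ has a splitting field $L\in\NF_8(D_4)$; fixing an identification $\Gal(L/\Q)\cong D_4$ in the notation of \ref{item3ix} (there are only $|\Aut(D_4)|$ of these per $L$, and by (\ref{eq35i}) none of them changes $C(T)$) we have $C(T)=D_KD_{M_2}/D_{K_2}$. Hence $N_3^{\tor}(X;H_{8,g})\ll\#\{L\in\NF_8(D_4):D_KD_{M_2}/D_{K_2}\le X\}$, where $K$ is the unique quadratic subfield of $L$ with $\Gal(L/K)$ cyclic of order $4$ and $M=L^{g^2}$ its quadratic subextension.

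First I would establish an arithmetic formula for this invariant. Squaring and using (\ref{eq35g}) gives $C(T)^2=D_K^2D_L/D_M$; feeding in the conductor--discriminant formula for the cyclic quartic extension $L/K$ — namely $D_L=D_K^4\,N_{K/\Q}(\mathfrak f)^2\,N_{K/\Q}(\mathfrak f_0)$ and $D_M=D_K^2\,N_{K/\Q}(\mathfrak f_0)$, where $\mathfrak f,\mathfrak f_0\subseteq\mathcal O_K$ are the conductors over $K$ of $L$ and of $M$ — yields the clean identity
$$
C(T)=D_K^2\cdot N_{K/\Q}(\mathfrak f).
$$
Next I would prove that $v_p(C(T))\ge 2$ for every odd prime $p$ ramifying in $L$. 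If $p\mid D_K$ this is clear, since $v_p(D_K^2)=2$ and $N_{K/\Q}(\mathfrak f)\ge 1$. If $p\nmid D_K$ but $p$ ramifies in $L$, then the inertia group at $p$ is contained in $\Gal(L/K)$, so $L/K$ is ramified at \emph{every} prime of $K$ above $p$; as $p$ is tame in $L/\Q$ each such prime contributes exponent $1$ to $\mathfrak f$, and since $p$ is unramified in $K$ the residue degrees over $p$ sum to $[K:\Q]=2$, giving $v_p(N_{K/\Q}(\mathfrak f))=2$. In either case $C(T)\ge\prod_{p\mid D_L,\;p\text{ odd}}p^2$, so $C(T)\le X$ forces $\operatorname{rad}(D_L)\le 2X^{1/2}$.

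It then remains to bound $\#\{L\in\NF_8(D_4):\operatorname{rad}(D_L)\le Y\}$ for $Y=2X^{1/2}$. A $D_4$-octic field is determined by the pair $(K,L/K)$ with $K$ quadratic and $L/K$ cyclic of degree $4$ with $\Gal(K/\Q)$ acting by inversion; the condition $\operatorname{rad}(D_L)\le Y$ forces $\operatorname{rad}(D_K)\mid\operatorname{rad}(D_L)$, leaving $O_\varepsilon(Y^\varepsilon)$ choices of $K$, and for each such $K$ any admissible $L/K$ is unramified outside the set $S$ of places of $K$ lying above the primes dividing $\operatorname{rad}(D_L)$ (of which there are $\ll\log Y$) together with the infinite places. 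The number of such $L/K$ is at most the number of continuous homomorphisms $\Gal(K_S^{\mathrm{ab}}/K)\to C_4$, which by class field theory is $\ll 4^{|S|}\cdot|\operatorname{Cl}_K[4]|$ (the $S$-units and archimedean places contributing a bounded power of $4$); Gauss's genus theory gives $|\operatorname{Cl}_K[4]|\le|\operatorname{Cl}_K[2]|^2=4^{\,\omega(D_K)-1}$, where $\omega(m)$ is the number of distinct prime factors of $m$, and since $\omega(D_K),|S|\ll\log Y$ this is $\ll_\varepsilon Y^\varepsilon$. Summing over $K$ and over the squarefree values $\le Y$ of $\operatorname{rad}(D_L)$ gives $\#\{L\in\NF_8(D_4):\operatorname{rad}(D_L)\le Y\}\ll_\varepsilon Y^{1+\varepsilon}$, hence $N_3^{\tor}(X;H_{8,g})\ll_\varepsilon X^{1/2+\varepsilon}$, which is more than the assertion (take any $\delta<\tfrac14$); replacing the $Y^\varepsilon$ estimates by divisor-power sums handled via Lemma~\ref{lem23c} would turn this into a bound $X^{1/2}(\log X)^{O(1)}$.

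The hard part is the inequality $v_p(C(T))\ge 2$ for ramified odd $p$: it rests on the identity $C(T)=D_K^2N_{K/\Q}(\mathfrak f)$ together with the tameness of odd primes in the degree-$8$ field $L$, and one must carefully separate the cases according to whether $p$ ramifies in $K$. Once that is secured the counting step is routine, the only genuine input being genus theory for the $2$-part of the class group of the quadratic field $K$.
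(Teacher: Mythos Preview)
Your argument is correct and in fact yields the sharper bound $N_3^{\tor}(X;H_{8,g})\ll_\varepsilon X^{1/2+\varepsilon}$, stronger than the $X^{3/4-\delta}$ stated. The one slip is the line ``since $\omega(D_K),|S|\ll\log Y$ this is $\ll_\varepsilon Y^\varepsilon$'': the bound $\omega\ll\log Y$ only gives $4^{\omega}\ll Y^{c}$ for some fixed $c>0$, not $Y^{\varepsilon}$. What you need (and what is true) is $\omega(n)\ll\log n/\log\log n$, or more simply $C^{\omega(n)}\le\tau(n)^{O(1)}\ll_\varepsilon n^{\varepsilon}$. With that correction the counting step goes through. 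Your key inequality $v_p(C(T))\ge 2$ for odd ramified $p$ is right; in the case $p\nmid D_K$ the crucial point that \emph{every} prime of $K$ above $p$ ramifies in $L/K$ follows because the only nontrivial subgroups of $\langle g\rangle$ are normal in $D_4$, so all inertia groups above $p$ coincide.

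The paper proceeds quite differently. It also computes $C(T)^2=D_L D_K^2/D_M$ but then rewrites this as $D_L/(2^\alpha m_L^2)$, where $m_L$ is the product of odd primes with splitting type $(1^41^4)$ or $(2^4)$ in $L$ (equivalently, inertia $\langle g\rangle$); it then invokes a result of Shankar--Varma \cite{SV21} bounding the number of $D_4$-octics with $D_L\le X$ and $m_L=q$, and sums over $q$. Your route is more elementary---it avoids the deep input from \cite{SV21}---and reduces instead to counting $D_4$-octics of bounded radical via class field theory and genus theory for $\operatorname{Cl}_K[2]$. In effect you exploit the stronger fact that $v_p(C(T))=2$ for \emph{all} odd ramified $p$ (not just those in $m_L$), which forces $\operatorname{rad}(D_L)\le 2X^{1/2}$ and makes the subsequent count essentially trivial. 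The paper's approach would be the natural one if one wanted finer control on the local behaviour of $D_L$, but for the bare upper bound your argument is both shorter and quantitatively better.
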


\begin{proof}
We follow the notation in \ref{item3ix} of Section \ref{Sec3}. Let $m_L$ be the product of odd primes whose splitting type in $L$ is $(1^41^4)$ or $(2^4)$. By \cite[Table 1]{ASVW21}, $\displaystyle v_p(\frac{D_M}{D_K^2}) = v_p(m_L^2)$ for every odd prime $p$ so
\begin{equation} \label{eq43d1}
   \left ( \frac{D_K D_{M_2}}{D_{K_2}} \right )^2 \overset{\scriptsize (\ref{eq35g})}{=} D_L \frac{D_K^2}{D_M} = \frac{D_L}{2^{\alpha} m_L^2} 
\end{equation}
for $\displaystyle \alpha = v_2(\frac{D_M}{D_K^2}) \leq 6$. 

For an odd squarefree integer $q > 0$, denote by $n(X; q)$ the number of octic $D_4$-fields $L$ such that $D_L \leq X$ and $m_L=q$. By \cite[Corollary 3]{SV21}, we have
\begin{equation} \label{eq43d2}
n(X; q) \ll_{\varepsilon} \frac{X^{\frac{1}{4}} (\log X)^2}{q^{\frac{3}{2} - \varepsilon}} + \frac{X^{\frac{1}{4} - \delta}}{q^{1+\delta}}
\end{equation}
for some constant $\delta > 0$. Now the equation (\ref{eq43d1}) and the inequality (\ref{eq43d2}) imply that
\begin{equation*}
\begin{split}
N_3^{\tor}(X; H_{8, g}) 
& = N_8^{L}(X; D_4; \frac{D_K D_{M_2}}{D_{K_2}}) \\
& \leq N_8^{L}(2^6 X^2; D_4; \frac{D_L}{m_L^2}) \\
& \leq \sum_{q \leq X^{\frac{1}{2}}} n(2^6 X^2 q^2; q) \\
& \ll_{\varepsilon} \sum_{q \leq X^{\frac{1}{2}}} \left ( \frac{X^{\frac{1}{2}} (\log X)^2}{q^{1 - \varepsilon}} + \frac{X^{\frac{1}{2} - 2 \delta}}{q^{\frac{1}{2} + 3 \delta}} \right ) \\
& \ll_{\varepsilon} X^{\frac{1}{2} + \varepsilon} + X^{\frac{3}{4} - \frac{7}{2} \delta}. \qedhere
\end{split}    
\end{equation*}
\end{proof}

\begin{lemma} \label{lem43e}
Let $D_i$ ($i=2, 3, 4, 6, 8, 12$) be as in \ref{item3xvi} of Section \ref{Sec3}. Then for every prime $p \geq 5$, 
\begin{enumerate}
    \item $\displaystyle v_p(\frac{D_8}{D_4 D_2}) \geq v_p(D_3 f)$ for $f \in \Sqf_2$ such that $D_4 = D_3 f^2$.
    
    \item $\displaystyle v_p(\frac{D_{12}}{D_6 D_4}) \geq \frac{2}{9}v_p(D_{12})$.
\end{enumerate}
\end{lemma}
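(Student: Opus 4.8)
The plan is to reduce everything to the inertia at $p$. Since $p\ge 5$ does not divide $|S_4|=24$, the splitting field $L/\Q$ is tamely ramified at $p$, so the inertia group $I_p$ of a prime of $L$ above $p$ is cyclic, $I_p=\langle\sigma\rangle$. Fixing an identification of $S_4$ with the symmetric group on four letters under which $L_4$ is the fixed field of a point stabilizer (so $\langle hg\rangle$ becomes a cyclic group of order $3$ generated by a $3$-cycle, $\langle ghg^{-1}\rangle=\langle g^{-1}hg\rangle$ a transposition, $L_3$ the fixed field of a Sylow $2$-subgroup, $L_2=L^{A_4}$, $L_{12}=L^{\langle\text{transposition}\rangle}$, and $L_6$ the fixed field of a non-central Klein four-group), the conjugacy class of $\sigma$ is one of the five cycle types $1^4$, $2\cdot 1^2$, $2^2$, $3\cdot 1$, $4$. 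For a subgroup $U\le S_4$ and $M=L^U$, the conductor--discriminant formula together with tameness gives
\begin{equation*}
v_p(D_M)=[M:\Q]-\#\{\text{orbits of }\langle\sigma\rangle\text{ on }S_4/U\},
\end{equation*}
a quantity depending only on the cycle type of $\sigma$ (equivalently, $v_p(D_M)$ is read off from the decomposition type of $p$ in $M$). First I would tabulate $v_p(D_M)$ for $M\in\{L_2,L_3,L_4,L_6,L_8,L_{12}\}$, and hence $v_p(f)=\tfrac12(v_p(D_4)-v_p(D_3))$, across the five cases.

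The key outcome of the table is that in every ramified case ($\sigma\ne 1$) one obtains the single value $v_p\bigl(\tfrac{D_8}{D_4D_2}\bigr)=2$, while in the unramified case it is $0$; by Lemma \ref{lem3d} this is exactly $v_p\bigl(\tfrac{D_{12}}{D_6D_4}\bigr)$, so both parts of the lemma reduce to bounding the same quantity. The same table yields, for all $p\ge 5$, the elementary bounds $v_p(D_3f)\le 2$ and $v_p(D_{12})\le 9$ (both attained when $\sigma$ is a $4$-cycle), and $v_p(D_3f)=v_p(D_{12})=0$ when $p$ is unramified in $L$. Combining: for part (1), $v_p\bigl(\tfrac{D_8}{D_4D_2}\bigr)\ge 2\ge v_p(D_3f)$ in the ramified case and $0\ge 0$ otherwise; for part (2), $v_p\bigl(\tfrac{D_{12}}{D_6D_4}\bigr)=2=\tfrac29\cdot 9\ge\tfrac29 v_p(D_{12})$ in the ramified case and $0\ge 0$ otherwise. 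Since each inequality is checked prime by prime, this proves the lemma.

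The main obstacle is bookkeeping rather than anything deep: one must correctly translate the abstract presentation $g^4=h^2=(gh)^3=1$ and the subgroups $\langle ghg^{-1}\rangle$, $\langle hg\rangle$, $\langle h,g^2hg^2\rangle$, $\langle hg,ghg^{-1}\rangle$, $\langle h,g^2\rangle$, $\langle g^2,gh\rangle$ into cycle-type data, and then carry out the orbit counts on the coset sets $S_4/U$ (equivalently, the decomposition types of $p$) without error. A point worth flagging is that Proposition \ref{prop22b} does \emph{not} apply directly to the compositum $L_8=L_4L_2$, since $L_2\subseteq L=L_4^c$ violates the hypothesis $L_4^c\cap L_2^c=\Q$; this is precisely why the proof proceeds through the inertia/orbit computation and through the already established identity of Lemma \ref{lem3d}, rather than through the compositum discriminant formula. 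Finally, the inequality $v_p(D_{12})\le 9$ should be recorded as a tameness statement valid only for $p\ge 5$, which is exactly the range claimed.
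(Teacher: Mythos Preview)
Your proof is correct and takes a genuinely different route from the paper. The paper argues each part separately and more indirectly: for (1) it proceeds by contradiction, using the containment $L_2\subset L_8$ to force $v_p(D_8)\ge 4v_p(D_2)=4$ and then showing the failed inequality would give $v_p(D_8)<4$; for (2) it uses the containments $L_6,L_4\subset L_{12}$ to get $v_p(D_{12})\ge\max(2v_p(D_6),3v_p(D_4))$ and then does a case split on $v_p(f)\in\{0,1\}$, never computing $v_p(D_8/D_4D_2)$ explicitly. Your approach instead exhausts the five possible cycle types of the tame inertia generator, finds that $v_p(D_8/D_4D_2)=v_p(D_{12}/D_6D_4)$ is exactly $2$ in every ramified case (and $0$ otherwise), and reads the required bounds $v_p(D_3f)\le 2$ and $v_p(D_{12})\le 9$ directly off the same table. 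This is more computational but has the virtues of giving exact values, of unifying (1) and (2) via Lemma~\ref{lem3d}, and of making the equality cases visible (both bounds are sharp at the $4$-cycle). The paper's argument is shorter and avoids fixing an explicit permutation model for $S_4$. One small slip: $\langle ghg^{-1}\rangle$ and $\langle g^{-1}hg\rangle$ are conjugate but not equal subgroups in general; this is harmless here since $D_{12}=D_{12}'$, but you should not write them as equal.
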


\begin{proof}
\begin{enumerate}
    \item Assume that $\displaystyle v_p(\frac{D_8}{D_4 D_2}) < v_p(D_3 f)$ for a prime $p \geq 5$. 
    \begin{itemize}
        \item Since $\displaystyle v_p(\frac{D_8}{D_4 D_2}) \geq v_p(\frac{D_3f^2}{D_2})$, we have $v_p(D_2)=1$ and $v_p(f)=0$. 
        
        \item $L_2$ is the quadratic resolvent of $L_3$ so $v_p(D_3) \leq 2$ is odd. Therefore $v_p(D_3)=v_p(D_4)=1$.
    
        \item Now $v_p(D_8) < v_p(D_4D_2D_3f) = 4 = v_p(D_2^4)$, which is impossible since $L_2 \subset L_8$.
    \end{itemize}
    
    \item Since $v_p(D_{12}) \geq \max (2v_p(D_6), 3v_p(D_4))$, it is enough to show that at least one of $\displaystyle v_p(D_6) \geq \frac{9}{5}v_p(D_4)$ or $\displaystyle v_p(D_6) \leq \frac{4}{3}v_p(D_4)$ holds. If $v_p(f)=0$, then $v_p(D_6)=v_p(D_4D_3)=2v_p(D_4)$. If $v_p(f)=1$, then $v_p(D_6) = 2v_p(D_4) - 2$. Since $L_4$ is tamely ramified at $p$, $v_p(D_4) \leq 3$ so $\displaystyle v_p(D_6) \leq \frac{4}{3}v_p(D_4)$. \qedhere
\end{enumerate}
\end{proof}

Up to conjugation, there are two transitive subgroups of $S_{12}$ which are isomorphic to $S_4$: $12T8$ and $12T9$. (These groups can be found online at LMFDB \cite{LMFDB}.) For a number field $K$ of degree $12$ which satisfies $\Gal(K^c/\Q) \cong S_4$, the Galois group $\Gal(K^c/\Q)$ is $12T8$ in $S_{12}$ if and only if $K$ has a quartic subfield. Malle's conjecture predicts that
$$
\left\{\begin{matrix}
N_{12}(X; 12T8)  \sim CX^{\frac{1}{5}} \\ 
N_{12}(X; 12T9)  \sim CX^{\frac{1}{4}}.
\end{matrix}\right.
$$

\begin{proposition} \label{prop43f}
\begin{enumerate}
    \item $N_3^{\tor}(X; H_{24, g}) \ll X (\log X)^3$.
    
    \item Under the assumption of Malle's conjecture for $12T8$, we have $N_3^{\tor}(X; H_{24, g}) \ll X^{\frac{9}{10}}$.
\end{enumerate}
\end{proposition}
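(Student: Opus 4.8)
The plan is to exploit Lemma~\ref{lem43e}, in the same spirit as the proofs of Proposition~\ref{prop43b}(\ref{43b3}),(\ref{43b4}). Recall from \ref{item3xvi} of Section~\ref{Sec3} that a torus $T$ with $G_T$ conjugate to $H_{24, g}$ is determined by its splitting field $L$ (a Galois extension of $\Q$ with group $S_4$), equivalently by the quartic $S_4$-field $L_4$ of \ref{item3xvi} up to isomorphism, and that $C(T) = \frac{D_8}{D_4 D_2} = \frac{D_{12}}{D_6 D_4}$ by Lemma~\ref{lem3d}. Both estimates will follow by bounding an arithmetic invariant from above in terms of $C(T)$; the only point requiring care is that the primes $2$ and $3$ occur to bounded exponent in the discriminant of any number field of degree $\leq 12$, so that Lemma~\ref{lem43e}(1) resp.\ (2) yield $C(T) \gg D_3 f$ resp.\ $C(T) \gg D_{12}^{2/9}$ with absolute implied constants (here $f \in \Sqf_2$ is the integer with $D_4 = D_3 f^2$ and $L_3 \in \NF_3(S_3)$ is the cubic resolvent of $L_4$).

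For part (1) the plan is to parametrize $L_4 \in \NF_4(S_4)$ by the pair $(L_3, f)$. By \cite[Theorem~1.4 and Proposition~6.4]{CT16} the number of $L_4$ attached to a fixed such pair is $O\big(h_2(L_3) \cdot 3^{w(f)}\big)$, and $C(T) \leq X$ forces $D_3 f \leq C_1 X$ for an absolute constant $C_1$, so
$$
N_3^{\tor}(X; H_{24, g}) \ll \sum_{f \in \Sqf_2} 3^{w(f)} \sum_{\substack{L_3 \in \NF_3(S_3) \\ D_3 \leq C_1 X/f}} h_2(L_3) \ll X \sum_{f \leq C_1 X} \frac{3^{w(f)}}{f} \ll X(\log X)^3,
$$
where the middle step uses $\sum_{D_3 \leq Y} h_2(L_3) \ll Y$ (\cite[Theorem~5]{Bha05}) and the last step uses $\sum_{f \leq Y} \tau(f)^{\log 3/\log 2} \ll Y(\log Y)^2$ from Lemma~\ref{lem23c} together with partial summation, exactly as in the inequality~(\ref{eq43a5}).

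For part (2) the plan is instead to pass to the degree $12$ subfield $L_{12}$ of $L$ appearing in \ref{item3xvi}. Since $L_{12}$ contains the quartic field $L_4$, its Galois group $\Gal(L_{12}^c/\Q)$ is $12T8$ (and not $12T9$), and the assignment $T \mapsto L_{12}$ is injective because the Galois closure of $L_{12}$ is $L$, which recovers $T$. By Lemma~\ref{lem43e}(2) one has $C(T) \gg D_{12}^{2/9}$, so $C(T) \leq X$ forces $D_{12} \leq (C_1 X)^{9/2}$, whence
$$
N_3^{\tor}(X; H_{24, g}) \leq N_{12}\big((C_1 X)^{9/2}; 12T8\big) \ll X^{9/10}
$$
under Malle's conjecture for $12T8$.

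Because Lemma~\ref{lem43e} and the ramification computations behind it are already available, the remaining work is essentially bookkeeping: tracking the wildly ramified primes $2$ and $3$ through $D_8/(D_4 D_2)$ and $D_{12}/(D_6 D_4)$ so that the constants $C_1$ are genuinely absolute, and checking that the $(L_3, f)$-parametrization of part (1) (with the Cohen--Thorne count) and the reduction to $L_{12}$ in part (2) capture every torus, both devices being the same ones used for $H_{48, b}$ and $H_{24, b}$. The main obstacle, the ramification analysis of $D_8/(D_4 D_2)$ and $D_{12}/(D_6 D_4)$, is already resolved by Lemma~\ref{lem43e}; beyond that, the one thing to be most careful about is the identification of $L_{12}$ as a $12T8$-field, since part (2) only needs the upper bound $N_{12}(Y; 12T8) \ll Y^{1/5}$ and one should make sure this is what Malle's conjecture gives for the group $12T8$ (rather than a statement about all degree-$12$ $S_4$-fields, which would also include the $12T9$ ones).
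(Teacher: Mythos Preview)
Your proposal is correct and follows essentially the same approach as the paper's proof: for part (1) you use Lemma~\ref{lem43e}(1) to reduce to counting $L_4\in\NF_4(S_4)$ with $D_3 f \leq C_1 X$, parametrize by $(L_3,f)$ via the Cohen--Thorne bound $O(h_2(L_3)\cdot 3^{w(f)})$, and then apply \cite[Theorem~5]{Bha05} and Lemma~\ref{lem23c}; for part (2) you use Lemma~\ref{lem43e}(2) to bound $D_{12}$ and invoke Malle's conjecture for $12T8$, exactly as in the paper. Your additional remarks about the injectivity of $T\mapsto L_{12}$ and the handling of the wild primes $2,3$ are implicit in the paper's argument and good to spell out.
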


\begin{proof}
\begin{enumerate}
    \item By Lemma \ref{lem43e}, 
    $$
    N_3^{\tor}(X; H_{24, g}) \leq N_4^{L_4}(\beta X; S_4; D_3f)
    $$
    for some constant $\beta >0$. As in the proof of Proposition \ref{prop43b}, we have
    \begin{equation*}
    \begin{split}
    N_3^{\tor}(X; H_{24, g}) 
    & \ll \sum_{f \leq \beta X} 3^{w(f)} \sum_{\substack{E \in \NF_3(S_3) \\ D_E \leq \frac{\beta X}{f}}} h_2(E)  \\
    & \ll \sum_{f \leq X} \frac{3^{w(f)} \beta X}{f} \\
    & \ll X (\log X)^3 \;\; \text{(Lemma \ref{lem23c})}.
    \end{split}    
    \end{equation*}
    
    \item By Lemma \ref{lem43e}, 
    $$
    N_3^{\tor}(X; H_{24, g}) \leq \# \left \{ L_{12} : D_{12} \leq \beta X^{\frac{9}{2}} \right \}
    $$
    for some constant $\beta >0$. Since $L_{12}$ has a quartic subfield $L_4$, the Galois group $\Gal(L_{12}^c/\Q)$ is $12T8$ so
    \begin{equation*}
    N_3^{\tor}(X; H_{24, g}) 
    \leq N_{12}(\beta X^{\frac{9}{2}} ; 12T8)
    \ll X^{\frac{9}{10}}. \qedhere
    \end{equation*}
\end{enumerate}
\end{proof}

Summing up the results of this section, we obtain the following theorem. Since there is an asymptotic lower bound $X (\log X)^2 \ll N_3^{\tor}(X)$ (\cite[Section 3]{Lee21}), the ratio of the upper and lower bounds of $N_3^{\tor}(X)$ is $O((\log X)^2 \log \log X)$ under the Cohen-Lenstra heuristics for $p=3$. 

\begin{theorem} \label{thm43g}
\begin{enumerate}
    \item We have
    \begin{equation} \label{eq43g1}
    N_3^{\tor}(X) \ll_{\varepsilon} X^{1 + \frac{\log 2 + \varepsilon}{\log \log X}}.
    \end{equation}
    
    \item Under the assumption of Conjecture \ref{conj1d}, we have
    \begin{equation} \label{eq43g2}
    N_3^{\tor}(X) \ll X (\log X)^4 \log \log X.
    \end{equation}
\end{enumerate}
\end{theorem}

\section*{Acknowledgments}

The author is supported by a KIAS Individual Grant (MG079601) at Korea Institute for Advanced Study. 
We thank Ila Varma for sharing the preprint \cite{SV21}. We also thank Frank Thorne and Chia-Fu Yu for their helpful comments.

{\small  }

\end{document}